\newcommand{\Aa}{\mathcal A}
\newcommand{\Bb}{\mathcal B}
\newcommand{\Ff}{\mathcal F}
\newcommand{\Gg}{\mathcal G}
\newcommand{\Hh}{\mathcal H}
\newcommand{\Dt}{\Delta t}
\newcommand{\varep}{\varepsilon}
\newcommand{\norm} [1]{\left\| {#1}\right\|}
\newcommand{\R} {\mathbb R}
\newcommand{\Rplus} {\mathbb R_{+}}
\newcommand{\rhot} {\tilde {\rho}}
\newcommand{\ft} {\tilde{f}}
\newcommand{\intd}[1]{\left( #1 \right)}
\newcommand{\ddt}{\frac d{dt} }
\newcommand{\eqdef}{\overset{\mathrm{def}}{=\joinrel=}}
\def\beq{\begin{equation}}                                                                                                                                                                                                                                                                                                                                                                                                                                                                                                                                                                         
\def\eeq{\end{equation}} 
\def\beqs{\begin{equation*}}
\def\eeqs{\end{equation*}}
\def\bals{\begin{align*}}
\def\eals{\end{align*}}
\def\bspl{\begin{split}}
\def\espl{\end{split}}
\def\myclearpage{}
\title{Numerical analysis for generalized Forchheimer flows of slightly compressible fluids in porous media}
\author{Thinh Kieu \footnotemark[2]  }
\begin{document}

\maketitle
 
\renewcommand{\thefootnote}{\fnsymbol{footnote}}
\footnotetext[2]{Department of Mathematics, University of North Georgia, Gainesville Campus, 3820 Mundy Mill Rd., Oakwood, GA 30566, U.S.A. ({\tt thinh.kieu@ung.edu}).}               
               
\begin{abstract} In this paper, we will consider the generalized Forchheimer flows for slightly compressible fluids. Using Muskat's and Ward's general form of Forchheimer equations, we describe the fluid dynamics by a nonlinear degenerate parabolic equation for  density. 
The long time numerical approximation of the nonlinear degenerate parabolic equation with time dependent boundary conditions is studied.  The stability for all positive time is established in both a continuous time scheme and a discrete backward Euler scheme. A Gronwall's inequality-type is used to study the asymptotic behavior of the solution. 
 Error estimates for the solution are derived for  both continuous and discrete time procedures. Numerical experiments confirm the theoretical analysis regarding convergence rates.  
 \end{abstract}
            
            
            
\begin{keywords}
Porous media, immersible flow, error analysis, Galerkin FEM, nonlinear degenerate parabolic equations, generalized Forchheimer equations, numerical analysis.
\end{keywords}

\begin{AMS}
65M12, 65M15, 65M60, 35Q35, 76S05.
\end{AMS}

\pagestyle{myheadings}
\thispagestyle{plain}
\markboth{Thinh Kieu}{Numerical analysis for non-Darcy flows in porous media}
            
            
\myclearpage    
\section {Introduction }
The most common equation to describe fluid flows in porous media is the Darcy law   
\beq\label{Darcy}
-\nabla p = \frac {\mu}{\kappa} v,
\eeq
where $p$, $v$, $\mu$, $\kappa$ are, respectively (resp.), the pressure, velocity, absolute viscosity and permeability. 

When the Reynolds number is large, Darcy's law becomes invalid, see \cite{Muskatbook,BearBook}. A nonlinear relationship between the velocity and gradient of pressure is introduced by adding the higher order terms of velocity to Darcy's law. 
Forchheimer established this in \cite{ForchheimerBook} the following three nonlinear empirical models:  
\beq\label{2term}
-\nabla p=av+b|v|v,
\quad
-\nabla p=av+b|v|v+c |v|^2 v,
\quad
-\nabla p=av+d|v|^{m-1}v, m\in(1, 2).
\eeq
Above, the positive constants $a,b,c,d$ are obtained from experiments.

The generalized Forchheimer equation of \eqref{Darcy} and \eqref{2term} were proposed in \cite{ABHI1,HI1,HI2} of the form 
\beq\label{gF}
-\nabla p =\sum_{i=0}^N a_i |v|^{\alpha_i}v. 
\eeq  

These equations are analyzed numerically in \cite{Doug1993,EJP05,K1},
theoretically in \cite{ABHI1,HI2,HIKS1,HKP1,HK1,HK2} for single phase flows, and also in \cite{HIK1,HIK2} for two-phase flows.

In order to take into account the presence of density in the generalized Forchheimer equation, we modify \eqref{gF} using the dimensional analysis by Muskat \cite{Muskatbook} and Ward \cite{Ward64}. They proposed the following equation for both laminar and turbulent flows in porous media:
\beq\label{W}
-\nabla p =F(v^\alpha \kappa^{\frac {\alpha-3} 2} \rho^{\alpha-1} \mu^{2-\alpha}),\text{ where  $F$ is a function of one variable.}
\eeq 
In particular, when $\alpha=1,2$, Ward \cite{Ward64} established from experimental data that
\beq\label{FW} 
-\nabla p=\frac{\mu}{\kappa} v+c_F\frac{\rho}{\sqrt \kappa}|v|v,\quad \text{where }c_F>0.
\eeq

Combining  \eqref{gF} with the suggestive form \eqref{W} for the dependence on $\rho$ and $v$, we propose the following equation 
 \beq\label{FM}
-\nabla p= \sum_{i=0}^N a_i \rho^{\alpha_i} |v|^{\alpha_i} v,
 \eeq
where $N\ge 1$, $\alpha_0=0<\alpha_1<\ldots<\alpha_N$ are real numbers, the coefficients $a_0, \ldots, a_N$ are positive.  
Here, the viscosity and permeability are considered constant, and we do not specify the dependence of $a_i$'s on them.

Multiplying both sides of the previous equation to $\rho$, we obtain 
 \beq\label{eq1}
 g( |\rho v|) \rho v   =-\rho\nabla p,
 \eeq
where the function $g$ is a generalized polynomial with non-negative coefficients. More precisely,  the function $g:\mathbb{R}^+\rightarrow\mathbb{R}^+$ is of the form
\beq\label{eq2}
g(s)=a_0s^{\alpha_0} + a_1s^{\alpha_1}+\cdots +a_Ns^{\alpha_N},\quad s\ge 0, 
\eeq 
where $N\ge 1,\alpha_0=0<\alpha_1<\ldots<\alpha_N$ are fixed real numbers, the coefficients $a_0, \ldots, a_N$ are non-negative numbers with $a_0>0$ and $a_N>0$.  

For slightly compressible fluids, the state equation is
\beq
\frac{d\rho}{dp}=\frac {\rho}{\kappa},
\eeq
which yields 
\beq \label{eq3}
\rho \nabla p=\kappa\nabla \rho.
\eeq

It follows form \eqref{eq2} and \eqref{eq3} that  
 \beq\label{Gs}
 g( |\rho v|) \rho v   =-\kappa\nabla \rho .
 \eeq

Solving  for $\rho v$ from \eqref{Gs} gives 
\beq\label{ru} 
\rho v=- \kappa K(|\kappa\nabla \rho |)\nabla \rho,
\eeq
where the function $K: \mathbb{R}^+\rightarrow\mathbb{R}^+$ is defined for $\xi\ge 0$ by
\beq\label{Kdef}
K(\xi)=\frac{1}{g(s(\xi))}, \text{ with }  s=s(\xi) \text{  being the unique non-negative solution of } sg(s)=\xi.
\eeq

The continuity equation is
\beq\label{con-law}
\phi\rho_t+{\rm div }(\rho v)=f,
\eeq
where the constant  $\phi\in(0,1)$ is the porosity, $f$ is the external mass flow rate . 

Combining \eqref{ru} and \eqref{con-law}, we obtain 
\beq\label{utporo}
\phi\rho_t-\kappa\nabla \cdot(K(|\kappa\nabla \rho|)\nabla \rho) = f.
\eeq
Then by scaling the time variable in \eqref{utporo}, we can assume that the multiple factor is $1$.  Hence \eqref{utporo} becomes 
\beq\label{maineq}
\rho_t- \nabla\cdot(K(|\nabla \rho|)\nabla \rho)=f.
\eeq
This equation is a nonlinear degenerate parabolic equation as the density gradient goes to infinity. For the existence and regularity theory of degenerate parabolic equations, see e.g. \cite{MR2566733, LadyParaBook68,HIKS1}. 

The numerical analysis of the degenerate parabolic equation arising in flow in porous media using mixed finite element approximations was first studied in\cite {ATWZ96}. Shortly thereafter,  Woodward and Dawson in \cite{WCD00} studied  the expanded mixed finite element methods for a nonlinear parabolic equation modeling flow into variably saturated porous media. Recently,  Galerkin finite element method for a coupled nonlinear degenerate system of advection-diffusion equations were studied in \cite{F06,F07,FS95,FS04}. In their analysis, the Kirchhoff transformation is used to move the nonlinearity from coefficient $K$ to the gradient and thus simplifies the analysis of the equations. This transformation does not applicable for the equation \eqref{maineq}.  

In this paper,  we focus on the case of Degree Condition, see  \eqref{deg} in the next section, for the following reasons. 
First, it already covers the most commonly used  Forchheimer equations in practice, namely, the two-term, three-term and power laws.
Second, it takes advantage of the well-known Poincar\'e-Sobolev embeddings in our work.
Third, it makes clear our ideas and techniques without involving much more complicated technical details in case that the Degree Condition is not met (see \cite{HIKS1, K1}). 
For our degenerate equations, we combine the techniques in \cite {HI1,HI2,HIKS1, HK1,HK2,HKP1, NJ2000,TW06} and utilize the special structures of the equations to obtain the long-time stability and error estimates for the approximate solution in several norms of interest. Though the error estimates are not optimal order due to the lack of regularity of the solution, these results are obtained with the minimum regularity assumptions.

 The paper is organized as follows.  
  In section \S \ref{Intrsec}, we introduce notations and some of relevant results. 
 In section \S \ref{GalerkinMethod}, we consider the semidiscrete finite element  Galerkin approximation and the implicit backward difference time discretization to the initial boundary value problem \eqref{rho:eq}. 
 In section \S \ref{Bsec}, we establish many estimates of the energy type norms for the approximate solution $\tilde \rho_h$.  In Theorems~\ref{bound-lq}, \ref{phderv}, \ref{Est4Grad}, the bounds for approximate solution, its time derivative and gradient vector are established for all time and time $t\to \infty$. The uniformly large time estimates, asymptotic estimates are in Theorems~\ref{UniGrad},\ref{Uni-rhot}.
In section \S \ref{errSec},  we analyze two versions of the Galerkin finite element approximations: the continuous Galerkin method and  the discrete Galerkin method. Using the monotonicity properties of Forchheimer equation and the boundedness of the solution, the {\it priori} error estimates for all time, long time, are derived for the solution in $L^2$, $L^\infty$ and for the gradient vector in $L^{2-a}$. The main results are stated and proved in Theorems~\ref{longerr1}--\ref{longerr2}.
In section \S \ref{Num-result}, the results of a few numerical experiments using the Lagrange elements of order $1$ in the two-dimensions are reported.  These results support our theoretical analysis regarding convergence rates.  


\section{Notations and auxiliary results}\label{Intrsec}

Suppose that $\Omega$ is an open, bounded subset of $\mathbb{R}^d$, d=2,3,\ldots, with boundary $\Gamma$ smooth. Let $L^2(\Omega)$ be the set of square integrable functions on $\Omega$ and $( L^2(\Omega))^d$ the space of $d$-dimensional vectors with all the components in $L^2(\Omega)$.  We denote $(\cdot, \cdot)$ the inner product in either $L^2(\Omega)$ or $(L^2(\Omega))^d$.  The notation $\norm {\cdot}$ means scalar norm $\norm{\cdot}_{L^2(\Omega)}$ or vector norm $\norm{\cdot}_{(L^2(\Omega))^d}$ and $ \norm{\cdot}_{L^p}=\norm{\cdot}_{L^p(\Omega)}$ represents the standard  Lebesgue norm.
Notation $\norm{\cdot}_{L^p(L^q)} =\norm{\cdot}_{L^p(0,T; L^q(\Omega))}, 1\le p,q<\infty$ means the mixed Lebesgue norm while $\norm{\cdot}_{L^p(H^q)}=\norm{\cdot}_{L^p(0,T;H^q(\Omega))}, 1\le p,q<\infty$ stands for the mixed Sobolev-Lebesgue norm. 

For $1\le q\le +\infty$ and $m$ any nonnegative integer, let
\beqs
W^{m,q}(\Omega) = \big\{u\in L^q(\Omega), D^q u\in L^q(\Omega), |q|\le m \big\}
\eeqs 
denote a Sobolev space endowed with the norm 
\beqs
\norm{u}_{m,q} =
\left( \sum_{|i|\le m} \norm{D^i u}^q_{L^q(\Omega)} \right)^{\frac 1 q}.
\eeqs    
Define $H^m(\Omega)= W^{m,2}(\Omega)$ with the norm $\norm{\cdot}_m =\norm{\cdot }_{m,2}$. 

Throughout this paper, we use short hand notations, 
\beqs
\norm{\rho(t)} = \norm{ \rho(\cdot, t)}_{L^2(\Omega)}, \forall t\ge 0 \quad \text{
 and } \quad \rho^0(\cdot) =  \rho(\cdot,0).
 \eeqs
 
  Our calculations frequently use the following exponents
\beq\label{a-const }
   a=\frac{\alpha_N}{\alpha_N+1}= \frac{\deg (g)}{\deg (g)+1},
  \eeq
\beq\label{MLGdef}
  \beta =2-a,\quad \lambda = \frac {2-a}{1-a}=\frac{\beta}{\beta-1},\quad \gamma=\frac{a}{2-a}=\frac a \beta.
  \eeq  

The letters $C, C_0, C_1,C_2\ldots$ represent for the positive generic constants.  Their values  depend on exponents, coefficients of polynomial  $g$,  the spatial dimension $d$ and domain $\Omega$, independent of the initial and boundary data, size of mesh and time step. These constants may be different from place to place. 

\textbf{Degree Condition:} All of the following are equivalent conditions:
\beq\label{deg}     \deg(g)\le \frac{4}{d-2},\quad  a\le \frac 4{d+2},\quad  2\le \beta^*,\quad  2-a\ge \frac{2d}{d+2}.
\eeq
Here $\beta^*$ is the Sobolev conjugate of $\beta$, given by
$\beta^* = \frac{\beta d}{\beta-d}.$ 

Throughout this paper, we assume the Degree Condition. Whenever this condition is met,
the Sobolev space $W^{1,\beta}(\Omega)$ is continuously embedded into $L^2(\Omega)$.

\begin{lemma}[cf. \cite{ABHI1, HI1}, Lemma 2.1]
 The function $K(\xi)$ has the following properties
 \begin{enumerate}
\item [i.] $K: [0,\infty)\to (0,a_0^{-1}]$ and it decreases in $\xi,$  

\item[ii.] For any $n\ge 1$, the function $K(\xi)\xi^n$ increasing and $K(\xi)\xi^n\ge 0$

\item[iii.]  Type of degeneracy  \beq\label{i:ineq1}  \frac{c_1}{(1+\xi)^a}\leq K(\xi)\leq \frac{c_2}{(1+\xi)^a},  \eeq

\item[iv.]  For all $n\ge 1,\delta>0,$ 
\beq\label{i:ineq2} 
c_3\left(\frac{\delta}{1+\delta} \right)^a (\xi^{n-a}-\delta^{n-a})\leq K(\xi)\xi^n\leq c_2\xi^{n-a}, 
\eeq
In particular, when $n=2$, $\delta=1$ 
\beq\label{iineq2} 
2^{-a}c_3 (\xi^{2-a}-1)\leq K(\xi)\xi^2\leq c_2\xi^{2-a}, 
\eeq

 \item[v.]  Relation with its derivative \beq\label{i:ineq3} -aK(\xi)\leq K'(\xi)\xi\leq 0, \eeq
  where $c_1, c_2, c_3$ are positive constants depending on $\Omega$ and $g$. 
  \end{enumerate}
\end{lemma}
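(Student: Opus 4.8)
The plan is to first establish the structural properties of the auxiliary root function $s(\xi)$ and the single identity $\xi=s(\xi)\,g(s(\xi))$, from which items (i)--(v) all follow. Set $h(s)=sg(s)=a_0 s+\sum_{i=1}^N a_i s^{1+\alpha_i}$. Since $a_0>0$ and every coefficient is non-negative, $h$ is a strictly increasing continuous bijection of $[0,\infty)$ onto itself with $h(0)=0$, so $s(\xi)=h^{-1}(\xi)$ is well defined, continuous, strictly increasing, satisfies $s(0)=0$, and is smooth on $(0,\infty)$ by the implicit function theorem (note $h'(s)=g(s)+sg'(s)>0$). Rewriting the defining relation gives $K(\xi)\,\xi=s(\xi)$, the identity I would exploit throughout.

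Items (i) and (ii) then come quickly. Because $g(s)\ge a_0>0$ for every $s\ge0$, we have $0<K(\xi)\le a_0^{-1}$ with equality at $\xi=0$, and monotonicity of $s$ and $g$ forces $K$ to decrease, which is (i). For (ii), the identity above gives $K(\xi)\,\xi^n=s(\xi)\,\xi^{n-1}$, a product of non-negative non-decreasing functions for $n\ge1$, hence non-negative and increasing.

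The heart of the matter is (iii). I would first prove the algebraic comparison $g(s)\asymp 1+s^{\alpha_N}$: the lower bound from $g(s)\ge\max\{a_0,\,a_N s^{\alpha_N}\}\ge\tfrac12\min\{a_0,a_N\}\,(1+s^{\alpha_N})$, and the upper bound by splitting at $s=1$ (for $s\le1$ every $s^{\alpha_i}\le1$, for $s\ge1$ every $s^{\alpha_i}\le s^{\alpha_N}$), yielding $g(s)\le g(1)\,(1+s^{\alpha_N})$. I would then convert this into a bound in $\xi$ via $\xi=sg(s)\asymp s+s^{1+\alpha_N}$: for $s\le1$ one has $\xi\asymp s$, so $1+s^{\alpha_N}$ and $(1+\xi)^a$ are both pinned between absolute constants; for $s\ge1$ one has $\xi\asymp s^{1+\alpha_N}$, whence $s^{\alpha_N}\asymp\xi^{a}$ (using $a=\alpha_N/(\alpha_N+1)$) and $1+\xi\asymp\xi$. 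Combining the two regimes yields $g(s(\xi))\asymp(1+\xi)^a$, i.e.\ $K(\xi)=1/g(s(\xi))\asymp(1+\xi)^{-a}$, which is \eqref{i:ineq1}. Keeping the constants $c_1,c_2$ uniform across the small-$s$ and large-$s$ regimes is the step I expect to require the most care.

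Finally (iv) and (v) are corollaries of (iii). The upper bound in (iv) is immediate from $K(\xi)\le c_2(1+\xi)^{-a}\le c_2\xi^{-a}$. For the lower bound I use $K(\xi)\ge c_1(1+\xi)^{-a}$ together with the fact that $\xi\mapsto\xi/(1+\xi)$ increases, so $(1+\xi)^{-a}\ge(\delta/(1+\delta))^a\,\xi^{-a}$ whenever $\xi\ge\delta$; this gives $K(\xi)\xi^n\ge c_1(\delta/(1+\delta))^a\,\xi^{n-a}$, while for $\xi<\delta$ the quantity $\xi^{n-a}-\delta^{n-a}$ is negative (since $n-a>0$) and the inequality holds trivially because $K(\xi)\xi^n\ge0$; taking $c_3=c_1$ proves \eqref{i:ineq2}. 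For (v), differentiating $K=1/g(s(\xi))$ with $s'(\xi)=1/(g+sg')$ gives $K'(\xi)\,\xi=-\dfrac{sg'(s)}{g(s)\,\bigl(g(s)+sg'(s)\bigr)}\le0$ because $g'\ge0$; writing $t=sg'(s)/g(s)=\bigl(\sum_i a_i\alpha_i s^{\alpha_i}\bigr)\big/\bigl(\sum_i a_i s^{\alpha_i}\bigr)$ exhibits $t$ as a weighted average of the exponents $\alpha_i$, so $0\le t\le\alpha_N$, and therefore $sg'/(g+sg')=t/(1+t)\le\alpha_N/(1+\alpha_N)=a$, which rearranges to $-aK(\xi)\le K'(\xi)\,\xi$, establishing \eqref{i:ineq3}.
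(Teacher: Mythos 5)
Your proof is correct. Note, however, that the paper itself offers no proof of this lemma: it is imported verbatim from the cited references (Lemma 2.1 of \cite{ABHI1, HI1}), so there is nothing in this document to compare your argument against line by line. On its own merits, your write-up is a complete and self-contained derivation, organized around the single identity $K(\xi)\xi=s(\xi)$, which is exactly the right lever: it makes (i) and (ii) immediate, and the two-regime comparison $\xi=sg(s)\asymp s+s^{1+\alpha_N}$ (splitting at $s=1$) correctly produces $g(s(\xi))\asymp(1+\xi)^{a}$ with $a=\alpha_N/(\alpha_N+1)$, hence \eqref{i:ineq1}. The deduction of \eqref{i:ineq2} from \eqref{i:ineq1} is sound, including the observation that for $\xi<\delta$ the left side of \eqref{i:ineq2} is negative (since $n-a>0$) so the bound is vacuous there; this is precisely why the $\delta$-dependent prefactor appears. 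The proof of \eqref{i:ineq3} via $K'(\xi)\xi=-K(\xi)\,t/(1+t)$ with $t=sg'(s)/g(s)$ a weighted average of the exponents $\alpha_i$, so that $t/(1+t)\le\alpha_N/(1+\alpha_N)=a$, is clean and is the standard argument. The only places requiring care are the ones you already flag: taking the worst constants over the two regimes $s\le1$ and $s\ge1$ so that $c_1,c_2$ are uniform, and checking that the computation for (v) is only needed for $\xi>0$ (at $\xi=0$ the inequality is trivial), since $g'(s)$ may blow up as $s\to0^+$ when $\alpha_1<1$ even though $sg'(s)\to0$.
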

We define 
\beq\label{Hdef}
H(\xi)=\int_0^{\xi^2} K(\sqrt{s}) dx, \text{~for~} \xi\geq 0. 
\eeq
The function $H(\xi)$ is compared with $\xi$ and $K(\xi)$ by
\beq\label{i:ineq4}
K(\xi)\xi^2 \leq H(\xi)\leq 2K(\xi)\xi^2. 
\eeq


For the monotonicity and continuity of the differential operator in \eqref{maineq}, we have the following results. 
\begin{lemma}[cf. \cite{HI1,ABHI1}, Lemma 5.2, Lemma III.11] The following statements hold 
\begin{itemize}
\item [i.] For all $y, y' \in \mathbb{R}^d$, 
\beq\label{Qineq}
\big(K(|y'|)y' -K(|y|)y \big)\cdot(y'-y)\geq (\beta-1)K( \max\{ |y|, |y'|\} )|y' -y|^2 .
\eeq      

 \item [ii.] For the vector functions $s_1, s_2$, there is a positive constant $c_4(\Omega, d,g)$ such that
\beq\label{Mono}
 \big( K(|s_1|)s_1-K(|s_2|)s_2,s_1 -s_2\big)\geq c_4\omega\norm{s_1-s_2}_{0,\beta}^2,
\eeq

where 
$$\omega =\left(1+ \max\left\{\norm{s_1}_{0,\beta} ;  \norm{s_2}_{0,\beta} \right\}\right)^{-a}. $$
\end{itemize}
\end{lemma}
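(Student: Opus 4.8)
The plan is to prove the pointwise estimate (i) by a one-dimensional integration along the segment joining the two vectors, and then to upgrade it to the functional estimate (ii) by integrating over $\Omega$ and applying H\"older's inequality. For part (i), fix $y,y'\in\R^d$, write $\Phi(z)=K(\gabs{z})z$ and $z(t)=y+t(y'-y)$ for $t\in[0,1]$, so that by the fundamental theorem of calculus the left-hand side of \eqref{Qineq} equals $\int_0^1\frac{d}{dt}\big[\Phi(z(t))\big]\cdot(y'-y)\,dt$. Differentiating and pairing with $y'-y$ produces the integrand
\[
\frac{d}{dt}\big[\Phi(z(t))\big]\cdot(y'-y)
= K(\gabs{z})\,\gabs{y'-y}^2
+ K'(\gabs{z})\,\frac{\big(z\cdot(y'-y)\big)^2}{\gabs{z}}.
\]
The second term is the delicate one: the lower bound $K'(\xi)\xi\ge -aK(\xi)$ from \eqref{i:ineq3} together with the Cauchy--Schwarz estimate $\big(z\cdot(y'-y)\big)^2\le \gabs{z}^2\gabs{y'-y}^2$ bounds it below by $-aK(\gabs{z})\gabs{y'-y}^2$, while the apparent singularity at $z=0$ is harmless since $\gabs{K'(\gabs{z})}\,\gabs{z}$ stays bounded and the quotient vanishes linearly in $\gabs{z}$. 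Hence the integrand is at least $(1-a)K(\gabs{z(t)})\gabs{y'-y}^2$; using that $K$ decreases (property~i) and that $\gabs{z(t)}=\gabs{(1-t)y+ty'}\le\max\{\gabs{y},\gabs{y'}\}$, so that $K(\gabs{z(t)})\ge K(\max\{\gabs{y},\gabs{y'}\})$ uniformly in $t$, integration over $[0,1]$ together with the identity $\beta-1=1-a$ gives \eqref{Qineq} exactly.

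For part (ii), integrate \eqref{Qineq} pointwise over $\Omega$ with $y=s_2(x)$, $y'=s_1(x)$ to obtain
\[
\big(K(\gabs{s_1})s_1-K(\gabs{s_2})s_2,\,s_1-s_2\big)
\ge (\beta-1)\int_\Omega K(M)\,\gabs{s_1-s_2}^2\,dx,
\qquad M:=\max\{\gabs{s_1},\gabs{s_2}\}.
\]
Using $K(M)\ge c_1(1+M)^{-a}$ from \eqref{i:ineq1}, it remains to show $\int_\Omega(1+M)^{-a}\gabs{s_1-s_2}^2\,dx\gtrsim\omega\,\norm{s_1-s_2}_{0,\beta}^2$. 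This is where the choice $\beta=2-a$ is used: writing $\gabs{s_1-s_2}^\beta=\big((1+M)^{-a}\gabs{s_1-s_2}^2\big)^{\beta/2}(1+M)^{a\beta/2}$ and applying H\"older's inequality with the exponents $2/\beta$ and $2/a$, which are conjugate precisely because $\beta/2+a/2=1$, gives
\[
\norm{s_1-s_2}_{0,\beta}^\beta
\le \Big(\int_\Omega(1+M)^{-a}\gabs{s_1-s_2}^2\,dx\Big)^{\beta/2}
\Big(\int_\Omega(1+M)^\beta\,dx\Big)^{a/2}.
\]
I would then bound the last factor via $M^\beta\le\gabs{s_1}^\beta+\gabs{s_2}^\beta$ and $1+N^\beta\le(1+N)^\beta$ to get $\int_\Omega(1+M)^\beta\,dx\le C(1+\max\{\norm{s_1}_{0,\beta},\norm{s_2}_{0,\beta}\})^\beta$; rearranging the displayed inequality then produces the factor $\omega=(1+\max\{\norm{s_1}_{0,\beta},\norm{s_2}_{0,\beta}\})^{-a}$ and the constant $c_4$ in \eqref{Mono}.

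The main obstacle is part (i): extracting the sharp coefficient $\beta-1$ hinges on applying the two-sided derivative bound \eqref{i:ineq3} with care at $z=0$, and it is the clean identity $\beta-1=1-a$ that makes the monotonicity constant transparent. Part (ii) is then an essentially mechanical H\"older computation whose only subtlety is the exponent bookkeeping forced by $\beta/2+a/2=1$ and the elementary control of $\int_\Omega(1+M)^\beta\,dx$ in terms of the $L^\beta$ norms of $s_1$ and $s_2$.
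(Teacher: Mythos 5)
Your proof is correct, and it follows the standard argument used in the cited references (the paper itself states this lemma by citation to \cite{HI1,ABHI1} without reproducing a proof): the line-integral computation with the derivative bound \eqref{i:ineq3} and Cauchy--Schwarz yields the pointwise estimate with constant $1-a=\beta-1$, and the weighted H\"older step with conjugate exponents $2/\beta$ and $2/a$ (valid precisely because $\beta=2-a$) correctly produces the factor $\omega$. Both the handling of the apparent singularity at $z=0$ and the exponent bookkeeping in part (ii) check out.
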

\begin{lemma}[cf. \cite{K1}, Lemma 2.4] \label{Lips}  For all vector $y, y' \in \mathbb{R}^d$. There exists a positive constant $c_5$ depending on polynomial  $g$, the spatial dimension $d$ and domain $\Omega$ such that 
\beq\label{Lipchitz}
   \left|K(|y'|)y' -K(|y|)y\right| \leq c_5|y' -y|.
\eeq  
\end{lemma}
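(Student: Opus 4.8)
The plan is to view the estimate as a global Lipschitz bound for the map $F\colon\R^d\to\R^d$ defined by $F(y)=K(|y|)y$, obtained by controlling the operator norm of its Jacobian away from the origin. First I would observe that $F$ is continuous on all of $\R^d$: the bound $0<K(\xi)\le a_0^{-1}$ from part~i keeps $F$ bounded near $0$, and $F(0)=0$, while $F$ is $C^1$ on $\R^d\setminus\{0\}$ since $K$ is $C^1$ on $(0,\infty)$ (the map $s\mapsto sg(s)$ is smooth and strictly increasing with positive derivative, so its inverse $s(\xi)$ in \eqref{Kdef} is $C^1$, hence so is $K$). For $y\neq 0$, writing $\xi=|y|$, a direct differentiation gives the symmetric Jacobian
\beq
DF(y)=K(\xi)\,I+\frac{K'(\xi)}{\xi}\,yy^{\top}.
\eeq

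Next I would diagonalize $DF(y)$. The rank-one matrix $yy^{\top}$ has eigenvalue $\xi^2$ in the direction $y$ and $0$ on the orthogonal complement $y^{\perp}$, so $DF(y)$ acts as multiplication by $K(\xi)+K'(\xi)\xi$ along $y$ and by $K(\xi)$ on $y^{\perp}$. Hence its eigenvalues are $\lambda_1=K(\xi)+K'(\xi)\xi$ (simple) and $\lambda_2=\cdots=\lambda_d=K(\xi)$. Invoking the derivative bound \eqref{i:ineq3}, namely $-aK(\xi)\le K'(\xi)\xi\le 0$, together with $a=\alpha_N/(\alpha_N+1)\in(0,1)$, gives $(1-a)K(\xi)\le\lambda_1\le K(\xi)$, while part~i gives $0<K(\xi)\le a_0^{-1}$. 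Therefore every eigenvalue of the symmetric matrix $DF(y)$ lies in $[0,a_0^{-1}]$, so its spectral norm satisfies $\norm{DF(y)}\le a_0^{-1}$ uniformly in $y\neq 0$.

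Finally I would pass from this pointwise Jacobian bound to the claimed inequality via the fundamental theorem of calculus along the segment joining $y$ and $y'$: provided the segment avoids the origin one has $F(y')-F(y)=\int_0^1 DF(y+t(y'-y))(y'-y)\,dt$, and the uniform bound yields $|F(y')-F(y)|\le a_0^{-1}|y'-y|$. Thus one may take $c_5=a_0^{-1}$, which depends only on the coefficient $a_0$ of $g$ (and through the eigenvalue analysis on $a$, hence on $\deg(g)$ and $d$).

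The only genuine subtlety I anticipate is the origin, where $F$ is not differentiable. I would dispose of it by continuity: the set of pairs $(y,y')$ whose connecting segment passes through $0$ is negligible and both sides of the inequality are continuous in $(y,y')$; alternatively, one splits the segment at the origin and applies the estimate to each subsegment, using $F(0)=0$. Everything else—the differentiation formula for $DF$ and the verification that $K$ is $C^1$ on $(0,\infty)$—is routine.
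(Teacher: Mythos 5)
The paper does not prove this lemma; it is quoted from \cite{K1} (Lemma 2.4) without an argument, so there is no in-paper proof to compare against. Your proposal is a correct, self-contained proof and is the standard argument for this estimate in the Forchheimer literature: differentiate $F(y)=K(|y|)y$ away from the origin, note that the symmetric Jacobian $K(\xi)I+\xi^{-1}K'(\xi)\,yy^{\top}$ has eigenvalues $K(\xi)+K'(\xi)\xi$ and $K(\xi)$, and use \eqref{i:ineq3} together with $K\le a_0^{-1}$ to get the uniform spectral bound $a_0^{-1}$ before integrating along the segment. The two points worth checking are exactly the ones you flag and both are handled correctly: $K$ need not have bounded derivative at $\xi=0$ (indeed $K'(\xi)\sim\xi^{\alpha_1-1}$ can blow up when $\alpha_1<1$), but the only combination entering the eigenvalues is $K'(\xi)\xi$, which \eqref{i:ineq3} controls; and the segment through the origin is disposed of either by splitting at $0$ (where $|y-y'|=|y|+|y'|$ and $F(0)=0$) or by the continuity/density argument. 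One cosmetic remark: since the largest eigenvalue in modulus is simply $K(\xi)$, the final constant $c_5=a_0^{-1}$ depends only on $a_0$, not on $a$ or $d$ as your closing parenthesis suggests; this only strengthens the stated conclusion.
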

The following Poincar\'e-Sobolev inequality with weight is used in our estimate later.      
 \begin{lemma}[cf. \cite{HI1} Lemma 2.4] Let $\xi(x)\ge 0$ be defined on $\Omega$. Then for any function $u(x)$ vanishing on the boundary $\Gamma$, there is a positive constant $c_6(\Omega, d,N,g)$ such that
\beq\label{PSineq}
\norm{u}_{0,\beta^*}^2 \le c_6 \norm{K^{\frac 12}(\xi) \nabla u}^2\left(1+ \norm{K^{\frac 12}(\xi) \xi}^2\right)^{\gamma}.
\eeq

Under degree condition (DC), i.e. $\deg(g) \le \frac{4}{d-2}$, we have
\beq\label{embL2}
\norm{u}^2 \le c_6 \norm{K^{\frac 12}(\xi) \nabla u}^2\left(1+ \norm{K^{\frac 12}(\xi) \xi}^2\right)^{\gamma}.
\eeq
\end{lemma}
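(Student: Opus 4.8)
The strategy is to peel off the degenerate weight with a single Hölder step and then reduce everything to the ordinary (unweighted) Poincar\'e--Sobolev embedding. First I would record the classical inequality $\norm{u}_{0,\beta^*}^2 \le C_0 \norm{\nabla u}_{0,\beta}^2$, valid for $u$ vanishing on $\Gamma$: since $a\in(0,1)$ we have $1<\beta=2-a<2\le d$, so the Sobolev conjugate $\beta^*$ is well defined and $W^{1,\beta}_0(\Omega)\hookrightarrow L^{\beta^*}(\Omega)$ by the Gagliardo--Nirenberg--Sobolev inequality (extending $u$ by zero). It then remains to dominate $\norm{\nabla u}_{0,\beta}^2$ by the right-hand side of \eqref{PSineq}.

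For that, the key algebraic observation is that the exponents satisfy $\beta+a=2$, so that $\tfrac{2}{\beta}$ and $\tfrac{2}{a}$ are H\"older conjugates. Writing $|\nabla u|^\beta = \bigl(K(\xi)|\nabla u|^2\bigr)^{\beta/2}\,K(\xi)^{-\beta/2}$ and applying H\"older's inequality with this conjugate pair yields
\beqs
\int_\Omega |\nabla u|^\beta\,dx \le \left(\int_\Omega K(\xi)|\nabla u|^2\,dx\right)^{\beta/2}\left(\int_\Omega K(\xi)^{-\beta/a}\,dx\right)^{a/2}.
\eeqs
Raising to the power $2/\beta$ and recalling $\gamma=a/\beta$ gives $\norm{\nabla u}_{0,\beta}^2 \le \norm{K^{\frac12}(\xi)\nabla u}^2\,\bigl(\int_\Omega K(\xi)^{-\beta/a}\,dx\bigr)^{\gamma}$.

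The remaining, and most substantive, step is to bound the negative-power integral $\int_\Omega K(\xi)^{-\beta/a}\,dx$ by $C\bigl(1+\norm{K^{\frac12}(\xi)\xi}^2\bigr)$; once this is done, raising to the power $\gamma$ and combining with the two estimates above finishes \eqref{PSineq}. Here I would use the lower degeneracy bound in \eqref{i:ineq1}, namely $K(\xi)\ge c_1(1+\xi)^{-a}$, to obtain the pointwise estimate $K(\xi)^{-\beta/a}\le c_1^{-\beta/a}(1+\xi)^{\beta}=c_1^{-\beta/a}(1+\xi)^{2-a}$. Then the lower bound in \eqref{iineq2}, $K(\xi)\xi^2\ge 2^{-a}c_3(\xi^{2-a}-1)$, converts $\xi^{2-a}$ into $1+K(\xi)\xi^2$ up to constants; together with the elementary inequality $(1+\xi)^{2-a}\le 2^{1-a}(1+\xi^{2-a})$ and integration over the bounded domain $\Omega$, this produces $\int_\Omega K(\xi)^{-\beta/a}\,dx \le C\bigl(1+\int_\Omega K(\xi)\xi^2\,dx\bigr)$. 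I expect this weight-control step to be the main obstacle, since it is where the precise two-sided degeneracy of $K$ and the identity $\beta+a=2$ must be used in tandem; the rest is bookkeeping of exponents.

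Finally, for \eqref{embL2} I would invoke the Degree Condition in the form $\beta^*\ge 2$: since $\Omega$ is bounded, H\"older's inequality gives $\norm{u}\le |\Omega|^{\frac12-\frac1{\beta^*}}\norm{u}_{0,\beta^*}$, hence $\norm{u}^2\le C\norm{u}_{0,\beta^*}^2$, and \eqref{embL2} follows at once from \eqref{PSineq}.
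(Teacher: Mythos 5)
The paper does not prove this lemma; it is quoted from \cite{HI1} (Lemma 2.4) without proof. Your argument is correct and is essentially the standard one behind that cited result: the unweighted Sobolev embedding $W^{1,\beta}_0\hookrightarrow L^{\beta^*}$, the H\"older split of $|\nabla u|^\beta$ with the conjugate pair $2/\beta$, $2/a$ (valid since $\beta+a=2$), the control of $\int_\Omega K(\xi)^{-\beta/a}\,dx$ via the two-sided degeneracy bounds \eqref{i:ineq1} and \eqref{iineq2}, and finally $L^{\beta^*}\hookrightarrow L^2$ on the bounded domain under the Degree Condition — all steps check out.
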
 
\begin{definition}\label{Env}
Given $f(t)$ defined on an interval $I\subset\mathbb R$. A function $F(t)$ is called an
(upper) envelop of $f(t)$ on $I$ if $F(t) \ge f(t)$ for all $t \in I$. We denote by $Env(f)$ a
continuous, increasing envelop function of $f(t)$.
\end{definition}

We state several Gronwall-type inequalities which are useful in our estimates analysis.      
 
 \begin{lemma}[cf. \cite{HIKS1}, Lemma 2.7] \label{ODE2} 
Let $\theta>0$ and let $y(t)\ge 0, h(t)>0, f(t)\ge 0$ be continuous functions on $[0,\infty)$ that satisfy
 \beqs
 y'(t) +h(t)y(t)^\theta \le f(t),\quad \text{for all } t>0.
 \eeqs
Then
 \beq\label{ubode}
 y(t)\le y(0)+\left[Env\left(\frac{f(t)}{h(t)}\right)\right]^\frac{1}{\theta}, \quad \text{ for all } t\ge 0.
 \eeq
If $\int_0^\infty h(t)dt=\infty$ then
 \beq\label{ulode}
 \limsup_{t\rightarrow\infty} y(t)\le \limsup_{t\rightarrow\infty} \left[\frac {f(t)}{h(t)}\right]^\frac{1}{\theta}.
\eeq
 \end{lemma}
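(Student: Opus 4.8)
The plan is to establish the two assertions \eqref{ubode} and \eqref{ulode} by elementary comparison (barrier) arguments applied directly to the differential inequality, rather than through any integrating-factor formula: the nonlinear term $y^\theta$ together with the fact that the envelope is only continuous make an explicit solution formula unavailable, so a comparison principle is the natural tool.

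For \eqref{ubode}, write $M(t)=Env(f(t)/h(t))$, so that $M$ is continuous, nondecreasing, and $M(t)\ge f(t)/h(t)$ for every $t$. Fix $T>0$; the goal is to show $y(T)\le y(0)+M(T)^{1/\theta}$ and then let $T$ range over $[0,\infty)$. The crucial sign observation is that at any instant $t\le T$ where $y(t)^\theta\ge M(T)$ one has $h(t)y(t)^\theta\ge h(t)M(T)\ge h(t)\,(f(t)/h(t))=f(t)$, using monotonicity of $M$ to get $M(T)\ge M(t)\ge f(t)/h(t)$; hence $y'(t)\le f(t)-h(t)y(t)^\theta\le 0$ there. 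To turn this into a pointwise bound I would run a first-crossing argument against the strict barrier $\psi_\epsilon(t)=y(0)+(M(T)+\epsilon)^{1/\theta}+\epsilon t$ with $\epsilon>0$. Since $\psi_\epsilon(0)>y(0)$, if $y$ ever met $\psi_\epsilon$ on $[0,T]$ there would be a first time $t_*$ with $y(t_*)=\psi_\epsilon(t_*)$ and $y'(t_*)\ge\psi_\epsilon'(t_*)=\epsilon>0$; but $y(t_*)\ge(M(T)+\epsilon)^{1/\theta}$ forces $y(t_*)^\theta\ge M(T)+\epsilon>M(T)\ge f(t_*)/h(t_*)$ and therefore $y'(t_*)<0$, a contradiction. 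Thus $y<\psi_\epsilon$ on $[0,T]$; evaluating at $t=T$ and sending $\epsilon\to 0$ gives $y(T)\le y(0)+M(T)^{1/\theta}$, which is \eqref{ubode}.

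For \eqref{ulode}, set $L=\limsup_{t\to\infty}[f(t)/h(t)]^{1/\theta}$; there is nothing to prove when $L=\infty$, so assume $L<\infty$. Fix any $\mu>L$ and any $\delta>0$. Since $\mu^\theta>\limsup_{t\to\infty}f(t)/h(t)$, there is $T_0$ with $f(t)\le\mu^\theta h(t)$ for all $t\ge T_0$, so on that range the inequality reads $y'(t)\le h(t)(\mu^\theta-y(t)^\theta)$, which shows $y$ is strictly decreasing wherever $y>\mu$. I would first rule out $y(t)>\mu+\delta$ for all $t\ge T_0$: there $y'(t)\le -c_\delta h(t)$ with $c_\delta=(\mu+\delta)^\theta-\mu^\theta>0$, and integrating against $\int_0^\infty h\,dt=\infty$ would drive $y\to-\infty$, contradicting $y\ge 0$. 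Hence $y(t_1)\le\mu+\delta$ for some $t_1\ge T_0$, and the strict-decrease property makes the sublevel set invariant: if $y$ tried to recross the level $\mu+\delta$ upward at some $t_*>t_1$, then $y'(t_*)\ge 0$ from the crossing yet $y'(t_*)<0$ because $y(t_*)=\mu+\delta>\mu$, a contradiction. Therefore $y(t)\le\mu+\delta$ for all $t\ge t_1$, so $\limsup_{t\to\infty}y(t)\le\mu+\delta$; letting $\delta\to 0$ and then $\mu\downarrow L$ yields \eqref{ulode}.

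I expect the main obstacle to be the first part, precisely because $Env(f/h)$ is guaranteed only to be continuous and increasing: one cannot differentiate the target barrier $y(0)+M(t)^{1/\theta}$, so a naive comparison fails at the borderline where $y'$ and the barrier's slope are both nonnegative. The device of freezing $M$ at the endpoint value $M(T)$, inserting the strict slope $\epsilon t$ and the cushion $\epsilon$ inside the power, and only afterwards taking $T=t$ with $\epsilon\to 0$, is what renders the crossing strict and sidesteps the lack of regularity. In the second part the analogous care lies in upgrading "strictly decreasing while above the level $\mu+\delta$" to genuine trapping below that level, where the divergence $\int_0^\infty h\,dt=\infty$ is exactly the ingredient that prevents $y$ from lingering above $\mu+\delta$ indefinitely.
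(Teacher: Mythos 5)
The paper does not prove this lemma; it is quoted verbatim from \cite{HIKS1} (Lemma 2.7), so there is no in-paper argument to compare against. Your proof is correct: the frozen-endpoint barrier $\psi_\epsilon(t)=y(0)+(M(T)+\epsilon)^{1/\theta}+\epsilon t$ with a first-crossing contradiction handles the mere continuity of $Env(f/h)$ cleanly for \eqref{ubode}, and the divergence of $\int_0^\infty h\,dt$ plus the invariance of the sublevel set $\{y\le\mu+\delta\}$ gives \eqref{ulode}; this is essentially the standard comparison argument used in the cited reference (which phrases the first part by noting that $y'\le 0$ on any interval where $y$ exceeds the envelope bound and then invoking monotonicity of the envelope, rather than via an explicit $\epsilon$-barrier, but the content is the same).
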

 
\begin{lemma}  [cf. \cite{NJ2000} Lemma 2.4]  Assume $f\ge 0,$ $h,\theta>0$ and $y(t)\ge 0$ be a continuous function on $[0,\infty)$ satisfying 
\beqs
y'(t) + h y(t)^\theta \le f, \quad \text{ for all } t\ge 0 
\eeqs 
then 
\beqs
y(t)\le \max\left\{ y(0), \left(\frac{f}{h}\right)^{1/\theta}  \right\}.
\eeqs
\end{lemma}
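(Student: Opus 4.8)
The plan is to argue by contradiction, comparing $y(t)$ against the equilibrium value $y^\ast=(f/h)^{1/\theta}$ of the associated autonomous equation $z'+hz^\theta=f$. The decisive observation is the sign of $y'$ above this threshold: at any instant where $y(t)>y^\ast$ one has $h\,y(t)^\theta>h\,(y^\ast)^\theta=f$, so the differential inequality forces
\beqs
y'(t)\le f-h\,y(t)^\theta<0,
\eeqs
that is, $y$ is strictly decreasing at every point where it exceeds $y^\ast$. This single monotonicity fact is really the whole engine of the proof.

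Set $M=\max\{y(0),y^\ast\}$, so that $M\ge y^\ast$ and $M\ge y(0)$, and suppose for contradiction that the conclusion fails, i.e.\ there is $t^\ast>0$ with $y(t^\ast)>M$. Since $y$ is continuous with $y(0)\le M<y(t^\ast)$, I would introduce the last crossing of the level $M$ before $t^\ast$,
\beqs
t_0=\sup\{\,t\in[0,t^\ast]\,:\,y(t)\le M\,\}.
\eeqs
By continuity $y(t_0)=M$, while the choice of supremum gives $y(t)>M\ge y^\ast$ for every $t\in(t_0,t^\ast]$. Hence the sign computation of the first step applies pointwise on $(t_0,t^\ast)$, yielding $y'(t)<0$ throughout; the mean value theorem on $[t_0,t^\ast]$ then produces $y(t^\ast)<y(t_0)=M$, contradicting $y(t^\ast)>M$. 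Therefore no such $t^\ast$ exists and $y(t)\le M$ for all $t\ge0$, which is the claim.

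The argument is short, and the only genuinely delicate point is the topological selection of $t_0$: one must take the \emph{last} time $y$ reaches the level $M$, rather than an arbitrary crossing, so that $y$ stays strictly above the threshold $y^\ast$ on the entire interval $(t_0,t^\ast)$ and the decreasing behaviour from the first step can be applied without interruption. Differentiability of $y$ is used silently here, but it is legitimate since it is already built into the hypothesis $y'(t)+h\,y(t)^\theta\le f$; continuity by itself would not license the strict-decrease conclusion at $t^\ast$. I expect this selection-of-$t_0$ step to be the main thing to get right, as everything else is a direct consequence of the sign of $y'$ above equilibrium.
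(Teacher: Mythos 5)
Your argument is correct. The paper does not actually prove this lemma — it is quoted from the cited reference (Ju, Lemma 2.4) without proof — so there is nothing internal to compare against; your last-crossing barrier argument (locate $t_0=\sup\{t\le t^\ast: y(t)\le M\}$, note $y'<0$ wherever $y>\left(f/h\right)^{1/\theta}$, and contradict via the mean value theorem) is the standard and complete way to establish it, and your remarks about taking the \emph{last} crossing and about differentiability being implicit in the hypothesis are exactly the right points of care.
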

\begin{lemma}[cf. \cite{NJ2000} Lemma 2.5, Discrete Gronwall's inequality]  \label{DGronwall}  Assume $f\ge 0,$ $h>0$, $ \theta>0$, $\Delta t>0$ and the sequence $\{y_n\}_{n=1}^\infty$ nonnegative satisfying 
\beqs
\frac{y_{n} -y_{n-1}}{\Delta t} + h y_n^\theta \le f, \quad \text{ for all } n=1,2,\ldots 
\eeqs 
then 
\beqs
y_n\le \max\left\{ y_0, \left(\frac{f}{h}\right)^{1/\theta}  \right\}.
\eeqs
\end{lemma}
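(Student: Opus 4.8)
The plan is to prove the bound by induction on $n$, the key leverage coming from the \emph{implicit} placement of the nonlinear term at the new time level $y_n$. Set
\beq
M = \max\left\{ y_0,\ \left(\frac{f}{h}\right)^{1/\theta} \right\};
\eeq
the goal is to show $y_n \le M$ for every $n \ge 0$. The base case $y_0 \le M$ holds by the very definition of $M$. For the inductive step I would assume $y_{n-1} \le M$ and rewrite the hypothesis as
\beq
y_n + \Dt\, h\, y_n^\theta \le y_{n-1} + \Dt\, f.
\eeq

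I then argue by contradiction: suppose $y_n > M$. Since $M \ge (f/h)^{1/\theta}$ and the map $x \mapsto x^\theta$ is strictly increasing on $[0,\infty)$ for $\theta>0$, the inequality $y_n > M$ forces $y_n^\theta > f/h$, hence $\Dt\, h\, y_n^\theta > \Dt\, f$. Combining this with $y_n > M \ge y_{n-1}$ and adding the two strict inequalities gives
\beq
y_n + \Dt\, h\, y_n^\theta > y_{n-1} + \Dt\, f,
\eeq
which contradicts the displayed recurrence. Therefore $y_n \le M$, closing the induction.

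The argument is elementary and I do not anticipate a genuine obstacle; the only point requiring care is the monotonicity of $x \mapsto x^\theta$, which underwrites the implication $y_n > (f/h)^{1/\theta} \Rightarrow h\,y_n^\theta > f$. It is worth emphasizing that \emph{no} smallness restriction on $\Dt$ is needed, precisely because the term $h\,y_n^\theta$ is evaluated at the implicit level $y_n$ rather than at $y_{n-1}$: any overshoot of $y_n$ beyond $M$ is penalized at the same time index, so the contradiction closes at once (an explicit scheme would instead demand $\Dt$ small, matching the structure of the backward Euler discretization used later in the paper). As a sanity check, the degenerate case $f=0$ reduces to $M=y_0$, and the recurrence then yields $y_n \le y_{n-1}$, so the sequence is nonincreasing and the bound holds trivially.
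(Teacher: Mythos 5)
Your proof is correct: the induction with the contradiction step (if $y_n>M$ then both $y_n>y_{n-1}$ and $\Dt\,h\,y_n^\theta>\Dt\,f$, contradicting the rearranged recurrence $y_n+\Dt\,h\,y_n^\theta\le y_{n-1}+\Dt\,f$) is exactly the standard argument, and your observation that the implicit placement of $y_n^\theta$ removes any smallness condition on $\Dt$ is the right structural point. The paper itself gives no proof of this lemma --- it is quoted from \cite{NJ2000} (Lemma 2.5) --- and your argument coincides with the one given there, so there is nothing to reconcile.
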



\section{The Galerkin finite element method}\label{GalerkinMethod}
We consider the initial boundary value problem associated with \eqref{maineq} ,
\beq\label{rho:eq}
\begin{cases}
\rho_ t - \nabla \cdot (K (|\nabla \rho|)\nabla \rho  ) =f, &\text {in }  \Omega\times \R_+,\\
\rho(x,0)=\rho^0(x), &\text {in } \Omega,\\
\rho(x,t)=\psi(x,t),  &\text{on } \Gamma \times  \R_+,
\end{cases}
\eeq 
where $\rho^0(x)$ and $\psi(x,t)$ are given initial and boundary data, respectively.  

To deal with the non-homogeneous boundary condition, we extend the Dirichlet boundary data  from boundary $\Gamma$ to the whole domain $\Omega$, see \cite{HI1,JK95, MC70}. Let $\phi(x,t)$ be such an extension.  
Let $\rhot = \rho-\phi$. Then $\rhot (x,t)=0 \text{ ~on~} \Gamma \times \Rplus.$ System  \eqref{rho:eq} is rewritten as  
\beq\label{rhoeq}
\begin{cases}
\rhot_ t - \nabla \cdot (K (|\nabla \rho|)\nabla \rho  ) =\tilde f, &\text {in }  \Omega\times \R_+,\\
\rhot(x,0)=\rhot^0(x), &\text {in } \Omega,\\
\rhot(x,t)=0,  &\text{on } \Gamma \times  \R_+,
\end{cases}
\eeq 
where  $ \rhot^0 = \rho^0(x) -\phi(x,0) $ and $\tilde f= f-\phi_t$. 

The variational formulation of \eqref{rho:eq} is defined as the follows: Find $\rhot :\Rplus \rightarrow W \equiv H_0^1  $ such that 
\beq\label{weakform}
(\rhot_t, w) + (K(|\nabla \rho |)\nabla \rho,\nabla w) = (\tilde f, w),  \quad \forall w\in H_0^1(\Omega)
\eeq 
with $\rhot(x,0)=\rhot^0(x).$ 

\vspace{0.2cm}
Let  $\{\mathcal T_h\}_h$ be a family of globally quasiuniform triangulations of $\Omega$ with $h$ being the maximum diameter of the element. Let $W_h$ be the space of discontinuous piecewise polynomials of degree $r\ge 0$ over  $\mathcal T_h$.     
It is frequently valuable to decompose the analysis of the convergence of finite element methods by passing through a projection of the solution of the differential problem into the finite element space. 

 We use the standard $L^2$-projection operator,  see \cite{Ciarlet78},  
$\pi: H^{1}(\Omega) \rightarrow W_h$,   satisfying
\begin{align*}
( \pi w ,   v_h ) = ( w ,   v_h ), \quad &\forall w\in W, v_h \in W_h.
\end{align*}
This projection has well-known approximation properties, see~\cite{BF91,JT81,BPS02}.
\begin{itemize}
\item[i.]  For all $w\in H^s(\Omega), s\in \{0,1\}$, there  is a positive constant $C_0$ such that
\beq\label{L2proj0}
\norm{\pi w}_{s}\le C_0 \norm{w}_{s}.   
\eeq

\item[ii.] There exists a positive constant $C_1$ such that
\beq\label{prjpi}
\norm{\pi w - w }_{0,q} \leq C_1 h^m \norm{w}_{m,q} 
\eeq
for all $w \in W^{m,q}(\Omega)$,  $0\le m \le r+1, 1\le q \le \infty$.  

\end{itemize}

 The semidiscrete formulation of~\eqref{weakform} can read as follows: Find $\rhot_h =\rho_h-\pi \phi:\R_+\rightarrow W_h$ such that
\beq\label{semidiscreteform}
  (\rhot_{h,t},w_h)+(K(|\nabla \rho_h|)\nabla \rho_h,\nabla w_h) = (\ft, w_h),  \quad \forall w_h\in W_h
\eeq 
with initial data $\rhot_h^0=\pi \rhot^0(x)$.

We use backward Euler for time-difference discretization. Let $\{t_i\}_{i=1}^\infty$ be the uniform partition of $\Rplus$ with $t_i=i\Dt$, for time step $\Dt >0$. We define $\varphi^n = \varphi(\cdot, t_n)$. 

The discrete time Galerkin finite element approximation to \eqref{weakform} is defined as follows:  
Find $ \rhot_h^n\in W_h$, $n=1,2,\dots$  such that 
\beq\label{fullydiscreteform}
\Big( \frac{ \rhot_h^n - \rhot_h^{n-1}}{\Delta t }, w_h\Big) +  \big(K(|\nabla \rho_h^n|)\nabla \rho_h^n, \nabla w_h\big) =(\ft^n, w_h ), \quad \forall w_h\in W_h.
\eeq

The initial data is chosen by
$
\rhot_h^0(x)=\pi \rhot^0(x). 
$

\section{A priori estimate for solutions}\label{Bsec}
We study the  equations \eqref{weakform}, and \eqref{semidiscreteform} for the density with fixed functions $g(s)$ in \eqref{eq1} and \eqref{eq2}. 
Therefore, the exponents $\alpha_i$ and coefficients $a_i$ are all fixed, and so are the functions $K(\xi)$, $H(\xi)$  in \eqref{Kdef}, \eqref{Hdef}.  

With the properties \eqref{i:ineq1}, \eqref{i:ineq2}, \eqref{i:ineq3}, the monotonicity \eqref{Qineq}, and by
classical theory of monotone operators \cite{MR0259693,s97,z90}, the authors in \cite {HIKS1} proved the global existence and uniqueness of the weak solution of the equation \eqref{weakform}.  For  the { \it priori } estimates, we assume that the weak solution is a sufficient regularity in both  $x$ and $t$ variables. 
Hereafter, we only consider solutions $\rhot(x; t)$ that satisfy $\rhot\in C^2(\bar\Omega\times \R_+ )$ and $\rhot,\nabla \rhot \in C( \bar\Omega \times \R_+)$.
\begin{theorem}\label{bound-lq}
Let $\tilde \rho_h$ be a solution of the problem \eqref{semidiscreteform}. Then, there exists  a positive constant $C$ such that for all $t>0,$
\beq\label{res1}
\norm{\rhot_h (t)}\le  C\left(1+\norm{\rhot^0 } + \left(Env\, g(t) \right) ^{\frac 1 \beta}\right),
\eeq
where 
\beq\label{gdef}
g(t)= \norm{\ft(t)}^{\lambda} + \norm{ \phi(t)}_{1}^2.
\eeq

Furthermore,
 \beq\label{limsup:rhot}
\limsup_{t\to\infty}\norm{\rhot_h(t)}^2\le  C \left(1+ \limsup_{t\to\infty} g(t)  \right)^\frac{2}{\beta}.
\eeq
If 
\beq\label{small:atinf}
\limsup_{t\to \infty} \norm{\ft(t)} = \limsup_{t\to \infty}\norm{\phi(t)}_{1}=0
\eeq
  then
\beq\label{res1a}
\limsup_{t\to\infty} \norm{\rhot_h (t)}^2=0.
\eeq
\end{theorem}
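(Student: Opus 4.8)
The plan is to run a weighted energy estimate with the test function $w_h=\rhot_h$ in \eqref{semidiscreteform} and feed the resulting differential inequality into the Gronwall-type Lemma~\ref{ODE2}. Taking $w_h=\rhot_h$ gives $\frac12\frac{d}{dt}\norm{\rhot_h}^2+(K(|\nabla\rho_h|)\nabla\rho_h,\nabla\rhot_h)=(\ft,\rhot_h)$. Since $\rho_h=\rhot_h+\pi\phi$, I would split $\nabla\rhot_h=\nabla\rho_h-\nabla\pi\phi$, so the dissipation term becomes $\int_\Omega K(|\nabla\rho_h|)|\nabla\rho_h|^2\,dx$ minus a cross term. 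The first integral is bounded below by the degeneracy estimate \eqref{iineq2}, yielding $2^{-a}c_3\big(\norm{\nabla\rho_h}_{0,\beta}^\beta-|\Omega|\big)$ because $2-a=\beta$, while the cross term $(K(|\nabla\rho_h|)\nabla\rho_h,\nabla\pi\phi)$ is controlled using the upper bound $K(\xi)\xi\le c_2\xi^{\beta-1}$ from \eqref{i:ineq2} (with $n=1$), H\"older's inequality with the conjugate pair $\lambda,\beta$, and Young's inequality, giving $\varepsilon\norm{\nabla\rho_h}_{0,\beta}^\beta+C_\varepsilon\norm{\nabla\pi\phi}_{0,\beta}^\beta$. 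The projection stability \eqref{L2proj0} together with $\beta\le2$ then bounds $\norm{\nabla\pi\phi}_{0,\beta}^\beta\le C\norm{\phi}_1^\beta\le C(1+\norm{\phi}_1^2)$.

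To convert the dissipation into a lower bound for $\norm{\rhot_h}$ itself, I would invoke the embedding $W^{1,\beta}(\Omega)\hookrightarrow L^2(\Omega)$ valid under the Degree Condition, combined with the Poincar\'e inequality for the boundary-vanishing function $\rhot_h$, to obtain $\norm{\rhot_h}^\beta\le C\norm{\nabla\rhot_h}_{0,\beta}^\beta\le C\norm{\nabla\rho_h}_{0,\beta}^\beta+C\norm{\nabla\pi\phi}_{0,\beta}^\beta$, hence $\norm{\nabla\rho_h}_{0,\beta}^\beta\ge c\norm{\rhot_h}^\beta-C(1+\norm{\phi}_1^2)$. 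Bounding the forcing by $(\ft,\rhot_h)\le\varepsilon\norm{\rhot_h}^\beta+C_\varepsilon\norm{\ft}^\lambda$ (Young with the conjugate pair $\beta,\lambda$) and absorbing the $\varepsilon$-terms on the left produces $\frac{d}{dt}\norm{\rhot_h}^2+c'\norm{\rhot_h}^\beta\le C(1+g(t))$ with $g$ as in \eqref{gdef}; this is exactly where the two exponents $\lambda$ and $2$ defining $g$ arise. Setting $y(t)=\norm{\rhot_h(t)}^2$, $\theta=\beta/2$, $h=c'$ and $f(t)=C(1+g(t))$, Lemma~\ref{ODE2} gives \eqref{res1} after using $\norm{\rhot_h^0}=\norm{\pi\rhot^0}\le\norm{\rhot^0}$ and the subadditivity of $t\mapsto t^{1/\beta}$; since $h$ is a positive constant, $\int_0^\infty h\,dt=\infty$ and the second conclusion of Lemma~\ref{ODE2} yields \eqref{limsup:rhot}.

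The delicate point, and the main obstacle, is \eqref{res1a}: the estimates above retain the additive constant coming from the $-|\Omega|$ term in \eqref{iineq2}, so they only give $\limsup\norm{\rhot_h}^2\le C$ rather than $0$ when the data decay. To remove this constant I would exploit that, once \eqref{res1}--\eqref{limsup:rhot} are known, $\norm{K^{\frac12}(|\nabla\rho_h|)\,|\nabla\rho_h|}^2\le c_2\norm{\nabla\rho_h}_{0,\beta}^\beta$ is uniformly bounded, say by $M$. Choosing $\xi=|\nabla\rho_h|$ and $u=\rhot_h$ in the weighted Poincar\'e-Sobolev inequality \eqref{embL2}, and writing the dissipation as $\norm{K^{\frac12}(|\nabla\rho_h|)\nabla\rhot_h}^2$ plus a cross term, the embedding gives $\norm{K^{\frac12}(|\nabla\rho_h|)\nabla\rhot_h}^2\ge c_7\norm{\rhot_h}^2$ with $c_7=\big(c_6(1+M)^\gamma\big)^{-1}>0$, while the cross term is absorbed by the weighted Cauchy-Schwarz bound $\norm{K^{\frac12}\nabla\pi\phi}^2\le a_0^{-1}C_0^2\norm{\phi}_1^2$. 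This converts the energy identity into the \emph{linear} differential inequality $\frac{d}{dt}\norm{\rhot_h}^2+c''\norm{\rhot_h}^2\le C(\norm{\ft}^2+\norm{\phi}_1^2)$. Under \eqref{small:atinf} the right-hand side has vanishing $\limsup$, so Lemma~\ref{ODE2} with $\theta=1$ (whose hypothesis $\int_0^\infty c''\,dt=\infty$ is immediate) forces $\limsup_{t\to\infty}\norm{\rhot_h(t)}^2=0$, which is \eqref{res1a}. The essential difficulty is precisely this last step: the power-law dissipation cannot see the nondegenerate behavior $K(\xi)\to a_0^{-1}$ as $\xi\to0$, and it is the weighted embedding \eqref{embL2}, fed by the uniform a priori bound, that supplies the genuinely linear coercivity needed for decay.
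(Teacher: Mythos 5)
Your derivation of \eqref{res1} and \eqref{limsup:rhot} is essentially the paper's argument: same test function $w_h=\rhot_h$, same use of \eqref{i:ineq2} to convert the dissipation into $\norm{\nabla\rho_h}_{0,\beta}^\beta$, same Poincar\'e--Sobolev step to reach $\frac{d}{dt}\norm{\rhot_h}^2+c\norm{\rhot_h}^\beta\le C(1+g(t))$, and the same appeal to Lemma~\ref{ODE2}. (Your handling of the forcing term by Young's inequality with the pair $(\beta,\lambda)$ and absorption into $\norm{\rhot_h}^\beta$ is a harmless shortcut around the paper's detour through \eqref{embL2}; both give the $\norm{\ft}^\lambda$ contribution in $g$.)

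The gap is in your proof of \eqref{res1a}. Your linear-coercivity argument hinges on a \emph{uniform-in-time} bound $\norm{K^{\frac12}(|\nabla\rho_h|)\,|\nabla\rho_h|}^2\le M$, which you claim follows ``once \eqref{res1}--\eqref{limsup:rhot} are known.'' It does not: \eqref{res1} controls only $\norm{\rhot_h(t)}$, and the differential inequality you derived yields at best a time-integrated bound $\int\norm{K^{1/2}\nabla\rho_h}^2\,dt$, not a pointwise one. A pointwise gradient bound is the content of Theorem~\ref{Est4Grad}, which is proved afterwards with the different test function $w_h=\rhot_{h,t}$ and under additional hypotheses on $\phi_t$ and $\rho^0\in H^1$ that are not assumed in Theorem~\ref{bound-lq} (the decay hypothesis \eqref{small:atinf} says nothing about $\phi_t$). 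So the constant $c_7=\bigl(c_6(1+M)^\gamma\bigr)^{-1}$ in your linear inequality is not justified at this stage. The missing idea is much simpler and stays entirely inside the power-law framework: keep the free parameter $\delta$ in \eqref{i:ineq2} rather than fixing $\delta=1$. The dissipation lower bound then reads $c_3\bigl(\tfrac{\delta}{1+\delta}\bigr)^a\bigl(\norm{\nabla\rho_h}_{0,\beta}^\beta-C\delta^\beta\bigr)$, and the limsup form \eqref{ulode} of Lemma~\ref{ODE2} gives
\begin{equation*}
\limsup_{t\to\infty}\norm{\rhot_h}^2\le C\Bigl[\bigl(\tfrac{\delta}{1+\delta}\bigr)^{-a}\limsup_{t\to\infty}(\text{data terms})+\delta^\beta\Bigr]^{\frac{2}{\beta}}
\end{equation*}
for every $\delta>0$. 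Under \eqref{small:atinf} the data terms vanish, leaving $\limsup\norm{\rhot_h}^2\le C\delta^2$, and letting $\delta\to0$ gives \eqref{res1a}. Your diagnosis that a fixed additive constant blocks decay was correct; the cure is the $\delta\to0$ limit, not a switch to linear coercivity.
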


\begin{proof}
 Selecting $w_h= \rhot_h$ in \eqref{semidiscreteform}, we obtain
\beq\label{dpalpha}
 \begin{split}
\frac{1}{2}\frac{d }{dt}\norm{\rhot_h}^2 +\norm{K^{\frac{1}{2}}(|\nabla \rho_h|)\nabla \rho_h}^2= (K|\nabla \rho_h|)\nabla \rho_h,\nabla \pi\phi  )
  + \intd{\ft,\rhot_h}.
\end{split}
\eeq
By Cauchy's inequality and the upper boundedness of $K(\cdot)$, we have
\beq\label{term1}
\begin{aligned}
(K|\nabla \rho_h|)\nabla \rho_h,\nabla \pi\phi  )&\le \frac 14 \norm{K^{\frac{1}{2}}|\nabla \rho_h|)\nabla \rho_h}^2 + C\norm{ \nabla \pi\phi}^2    .
\end{aligned}
\eeq
Using H\"older's inequality and \eqref{embL2} give 
\beqs
(\ft, \rhot_h )\le  \norm{\ft}\norm{\rhot_h}\le C \norm{\ft}\norm{K^{\frac 12}(|\nabla \rho_h| ) \nabla \rhot_h}\left(1+ \norm{K^{\frac 12}(|\nabla\rho_h|) \nabla\rho_h}^2\right)^{\frac \gamma 2}.
\eeqs
Thanks to triangle inequality, $(a+b)^\gamma\le 2^\gamma(a^\gamma+b^\gamma),\ \forall a,b\ge 0$, and Young's inequality we find that  
\begin{align*}
 &\norm{K^{\frac 12}(|\nabla \rho_h| ) \nabla \rhot_h}\left(1+ \norm{K^{\frac 12}(|\nabla\rho_h|) \nabla\rho_h}^2\right)^{\frac \gamma 2}\\
&\qquad\le C \left( \norm{K^{\frac 12}(|\nabla \rho_h| ) \nabla \rho_h} + \norm{\nabla \pi\phi}\right) 
\left(1+ \norm{K^{\frac 12}(|\nabla\rho_h|) \nabla\rho_h}^\gamma  \right)\\
&\qquad\le C \left\{ \norm{K^{\frac 12}(|\nabla \rho_h| ) \nabla \rho_h}+ \norm{K^{\frac 12}(|\nabla\rho_h|) \nabla\rho_h}^{\gamma+1}+\norm{\nabla \pi\phi}\norm{K^{\frac 12}(|\nabla \rho_h| ) \nabla \rho_h}^{\gamma} + \norm{\nabla \pi\phi} \right \}\\
&\qquad\le C \left\{ 1+ \norm{K^{\frac 12}(|\nabla \rho_h|) \nabla\rho_h}^{\gamma+1} +\norm{\nabla \pi\phi}^{\gamma+1} \right\}.
\end{align*}
This and Young's inequality applying for $\norm{\ft}\norm{\nabla \pi\phi}^{\gamma+1}$ with the exponents $\lambda$, $\frac{\lambda}{\lambda-1}$ yield
\beq\label{term2}
\begin{split}
(\ft, \rhot_h )&\le C \norm{\ft}\left\{ 1+ \norm{K^{\frac 12}(|\nabla \rho_h|) \nabla\rho_h}^{\gamma+1} +\norm{\nabla \pi\phi}^{\gamma+1} \right\}\\
&\le C \norm{\ft}+ C\norm{\ft}^{\lambda} +  \frac 14\norm{K^{\frac 12}(|\nabla \rho_h|) \nabla \rho_h}^2 +C\norm{\nabla \pi\phi}^2 .
\end{split}
\eeq
Combining \eqref{term1}, \eqref{term2} and \eqref{dpalpha} gives
\beq\label{diff-neq}
  \frac{d }{dt}\norm{\rhot_h}^2+\norm{K^{\frac{1}{2}}(|\nabla \rho_h|)\nabla \rho_h}^2
 \le C\left(\norm{ \nabla \pi\phi}^2 +  \norm{\ft}+ \norm{\ft}^{\lambda}\right).
\eeq
We have from \eqref{i:ineq2} that
\beqs
c_3\left(\frac{\delta}{1+\delta} \right)^a\left(\norm{\nabla \rho_h}_{0,\beta}^\beta -\delta^{\beta}\right)\le \norm{K^{\frac{1}{2}}(|\nabla \rho_h|)\nabla \rho_h}^2.
\eeqs
In virtue of the inequality $(a+b)^m \le 2^{m-1}(a^m+b^m),\forall a,b\ge 0, m\ge 1$,  
\beqs
\norm{\nabla \rhot_h}_{0,\beta}^\beta \le 2^{\beta-1}\left( \norm{\nabla \rho_h}_{0,\beta}^\beta +\norm{\nabla \pi\phi}_{0,\beta}^\beta \right).
\eeqs
Combining the two above inequalities gives
\beq\label{trhoEst}
c_3\left(\frac{\delta}{1+\delta} \right)^a\left(2^{1-\beta}\norm{\nabla \rhot_h}_{0,\beta}^\beta  -\norm{\nabla \pi\phi}_{0,\beta}^\beta- \delta^{\beta}\right)\le \norm{K^{\frac{1}{2}}(|\nabla \rho_h|)\nabla \rho_h}^2.
\eeq

Under the condition on the degree of the polynomial $g$, i.e under (DC), using the  Poincar\'e-Sobolev inequality, we obtain
\beq\label{PSIneq}
\norm{\rhot_h} \le C\norm{\rhot_h}_{0,\beta^*}\le   C_p \norm{\nabla \rhot_h }_{0,\beta} .
\eeq 
It follows from  \eqref{diff-neq}, \eqref{trhoEst} and \eqref{PSIneq} that
\begin{align*}
\frac{d }{dt}\norm{\rhot_h}^2&+ c_32^{1-\beta}C_p^{-\beta}\left(\frac{\delta}{1+\delta} \right)^a \norm{\rhot_h}^\beta \\
& \le  C\left(\norm{\ft}^{\lambda} + \norm{\ft} + \norm{\nabla \pi\phi}^2\right)
 +C\left(\frac{\delta}{1+\delta} \right)^a \norm{\nabla \pi\phi}_{0,\beta}^\beta +C\left(\frac{\delta}{1+\delta} \right)^a\delta^{\beta}\\
& \le  C\left(\norm{\ft}^{\lambda} + \norm{\ft}+ \norm{\nabla \pi\phi}^2
 +\norm{\nabla \pi\phi}_{0,\beta}^{\beta}\right) +C\left(\frac{\delta}{1+\delta} \right)^a\delta^{\beta}.
\end{align*}
According to the Gronwall's inequality in Lemma \ref{ODE2} with $\delta=1$,  we find that 
\beq\label{}
\begin{split}
\norm{\rhot_h}^2&\le  C\norm{\rhot_h^0}^2 + C  \left [ Env\Big(\norm{\ft}^{\lambda} +\norm{\ft}+ \norm{\nabla \pi\phi}^2
 +\norm{\nabla \pi\phi}^{\beta} +1 \Big)  \right]^{\frac{2}{\beta}}\\
&\le  C\norm{\rhot_h^0}^2 + C  \left [ Env\Big( \norm{\ft}^{\lambda} + \norm{\nabla \pi\phi}^2\Big) +1 \right]^{\frac{2}{\beta}}.
\end{split}
\eeq
This and the stability of $L^2$- projection \eqref{L2proj0} show that \eqref{res1} holds true.   

Due to $\int_0^\infty c_32^{1-\beta}C_p^{-\beta} dt =\infty$, it follows from \eqref{ulode} that,  
  \beq
 \begin{split}
\limsup_{t\to\infty}\norm{\rhot_h}^2&\le  C \left [ \left(\frac{\delta}{1+\delta} \right)^{-a} \limsup_{t\to\infty}\Big(\norm{\ft}^{\lambda} + \norm{\ft}+ \norm{\nabla \pi\phi}^2
 +\norm{\nabla \pi\phi}^{\beta}\Big) +\delta^{\beta}  \right]^{\frac{2}{\beta}}\\
 &\le  C \left [ \left(\frac{\delta}{1+\delta} \right)^{-a} \limsup_{t\to\infty}\Big(\norm{\ft}^{\lambda} + \norm{\ft}+ \norm{\phi}_{1}^2
 +\norm{\phi}_{1}^{\beta}\Big) +\delta^{\beta}  \right]^{\frac{2}{\beta}}.
\end{split}
\eeq

Therefore,  if we choose $\delta=1$ in the previous inequality then we obtain \eqref{limsup:rhot}.
 
Under assume \eqref{small:atinf} then   
$$
 \limsup_{t\to\infty}\Big(\norm{\ft}^{\lambda} + \norm{\ft}+ \norm{\phi}_{1}^2
 +\norm{\phi}_{1}^{\beta}\Big)=0.
$$
Hence,
$$
\limsup_{t\to\infty}\norm{\rhot_h}^2 \le C\delta^2. 
$$
Letting $\delta\to 0$, we obtain  
$$
\limsup_{t\to\infty}\norm{\rhot_h}^2 =0.
$$
The proof is complete.  
\end{proof}

Since the equation \eqref{semidiscreteform} can be interpreted as the finite system of ordinary differential equations in the coefficients of $\rho_h$ with respect to basis of $W_h$. The stability estimate \eqref{res1} suffices to establish the local existence of $\rho_h(t)$ for all $t\in \R_+.$   
The uniqueness of the approximation solution comes from the monotonicity of  operator, see \cite{HI1}.

Now we derive an estimate for the gradient of approximated solution.
 
\begin{theorem}\label{Est4Grad} Assume $\tilde \rho_h$ a solution to the problem \eqref{semidiscreteform}. Then, there exists a positive constant $C$ such that for all $t\ge 0$, 
\beq\label{res2}
 \norm{\nabla \rho_h(t)}_{0,\beta}^{\beta}  \le C \Aa(t),
\eeq
where 
\beq\label{Mdef}
\begin{split}
\Aa(t) &= 1+\norm{ \rho^0}_1^2+ \norm{\phi^0}^2 +\int_0^t e^{-\frac 1 2 (t-s)} \left(  \norm{\phi_t(s)  }_{1}^2 +  (Env\, g(s))^{\frac 2 {\beta}}  \right)ds.
\end{split}
\eeq

Furthermore,
\beq\label{limsupGrad}
 \limsup_{t\to \infty} \norm{\nabla \rho_h(t)}_{0,\beta}^\beta \le C\left(1+\limsup_{t\to\infty}\norm{\phi_t(t) }_{1}^2+(\limsup_{t\to\infty} g(t))^{\frac{2}{\beta}}\right).
  \eeq 
If 
\beq\label{small:atinf2}
\limsup_{t\to \infty} \norm{\ft(t)} = \limsup_{t\to \infty}\norm{\phi(t)}_{1}= \limsup_{t\to \infty}\norm{\phi_t(t)}_{1}= 0
\eeq
then 
 \beq\label{Grad:small}
  \limsup_{t\to \infty}\norm{\nabla \rho_h(t)}_{0,\beta}  =0.
 \eeq
\end{theorem}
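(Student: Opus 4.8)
The plan is to use the convex energy density $H$ from \eqref{Hdef}, whose derivative satisfies $H'(\xi)=2\xi K(\xi)$, so that $\tfrac12\tfrac{d}{dt}\int_\Omega H(|\nabla\rho_h|)\,dx=(K(|\nabla\rho_h|)\nabla\rho_h,\nabla\rho_{h,t})$. Write $\mathcal E(t)=\int_\Omega H(|\nabla\rho_h(t)|)\,dx$. First I would take $w_h=\rhot_{h,t}$ in \eqref{semidiscreteform} and use $\nabla\rho_{h,t}=\nabla\rhot_{h,t}+\nabla\pi\phi_t$ to obtain
\[
\norm{\rhot_{h,t}}^2+\tfrac12\tfrac{d}{dt}\mathcal E=(\ft,\rhot_{h,t})+(K(|\nabla\rho_h|)\nabla\rho_h,\nabla\pi\phi_t)\qquad(\mathrm D_1),
\]
and then take $w_h=\rhot_h$ to recover the coercive dissipation,
\[
\tfrac12\tfrac{d}{dt}\norm{\rhot_h}^2+\norm{K^{\frac12}(|\nabla\rho_h|)\nabla\rho_h}^2=(\ft,\rhot_h)+(K(|\nabla\rho_h|)\nabla\rho_h,\nabla\pi\phi)\qquad(\mathrm D_2).
\]

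The crux is to add $(\mathrm D_1)$ and $(\mathrm D_2)$ and manufacture exponential decay for the Lyapunov functional $\mathcal L(t)=\tfrac12\mathcal E(t)+\tfrac12\norm{\rhot_h(t)}^2$. Bounding the four inner products by Cauchy--Schwarz and Young's inequality, and using $K\le a_0^{-1}$ to estimate $\norm{K^{\frac12}\nabla\pi\phi}\le a_0^{-1/2}\norm{\nabla\pi\phi}$ and similarly for $\phi_t$, I would absorb $\tfrac12\norm{\rhot_{h,t}}^2$ and $\tfrac12\norm{K^{\frac12}(|\nabla\rho_h|)\nabla\rho_h}^2$ on the left. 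The remaining half of the dissipation becomes the decay term via \eqref{i:ineq4}, which gives $\norm{K^{\frac12}(|\nabla\rho_h|)\nabla\rho_h}^2\ge\tfrac12\mathcal E$, hence $\tfrac14\mathcal E=\tfrac12\mathcal L-\tfrac14\norm{\rhot_h}^2$; the surplus $\tfrac14\norm{\rhot_h}^2$ is then moved to the right-hand side. This last move is the key point: $\norm{\rhot_h}^2$ cannot be controlled by the degenerate dissipation, but it is already bounded for all $t$ by Theorem~\ref{bound-lq}, so it may be treated as forcing. The result is the linear inequality $\tfrac{d}{dt}\mathcal L+\tfrac12\mathcal L\le F(t)$ with $F(t)=C(\norm{\ft}^2+\norm{\rhot_h}^2+\norm{\nabla\pi\phi_t}^2+\norm{\nabla\pi\phi}^2)$. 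This step, producing a genuine decay rate rather than a mere growth bound, is the main obstacle, since testing with $\rhot_{h,t}$ or with $\rhot_h$ individually furnishes only one of the two needed structures.

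Integrating with the factor $e^{t/2}$ gives $\mathcal L(t)\le e^{-t/2}\mathcal L(0)+\int_0^t e^{-\frac12(t-s)}F(s)\,ds$, already displaying the kernel of $\Aa(t)$. To reach \eqref{res2} I would bound the forcing termwise: $\norm{\nabla\pi\phi_t}^2\le C\norm{\phi_t}_1^2$ and $\norm{\nabla\pi\phi}^2\le C\norm{\phi}_1^2\le Cg$ by \eqref{L2proj0}; $\norm{\rhot_h}^2\le C(1+(Env\,g)^{2/\beta})$ by \eqref{res1}; and $\norm{\ft}^2\le1+\norm{\ft}^{2\lambda/\beta}\le1+(Env\,g)^{2/\beta}$, which holds because $2\lambda/\beta\ge2$ (equivalently $\beta\le2$) and $g\ge\norm{\ft}^\lambda$. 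With $\mathcal E\le2\mathcal L$ and $\mathcal L(0)\le C(\norm{\rho^0}_1^2+\norm{\phi^0}^2)$ (using $\rho_h^0=\pi\rho^0$ and \eqref{L2proj0}), this gives $\mathcal E(t)\le C\Aa(t)$. The lower degeneracy bound \eqref{iineq2}, integrated over $\Omega$, then yields $\norm{\nabla\rho_h}_{0,\beta}^\beta\le C\norm{K^{\frac12}(|\nabla\rho_h|)\nabla\rho_h}^2+C\le C\mathcal E(t)+C\le C\Aa(t)$, which is \eqref{res2}.

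For the asymptotics I would avoid $\Aa(t)$, whose increasing envelope is unsuited to a $\limsup$, and instead apply the $\limsup$ form of Gronwall directly to $\tfrac{d}{dt}\mathcal L+\tfrac12\mathcal L\le F$, obtaining $\limsup_{t\to\infty}\mathcal L\le2\limsup_{t\to\infty}F$. Estimating $\limsup F$ by the same termwise bounds together with \eqref{limsup:rhot} produces \eqref{limsupGrad}. For \eqref{Grad:small} I would keep the free parameter $\delta$ in \eqref{i:ineq2} instead of setting $\delta=1$, giving $\norm{\nabla\rho_h}_{0,\beta}^\beta\le C_\delta\mathcal E+\delta^\beta|\Omega|$; under \eqref{small:atinf2} one has $\limsup g=\limsup\norm{\phi_t}_1=0$ and $\limsup\norm{\rhot_h}^2=0$ by \eqref{res1a}, so $\limsup F=0$, hence $\limsup\mathcal E=0$ and $\limsup\norm{\nabla\rho_h}_{0,\beta}^\beta\le\delta^\beta|\Omega|$ for every $\delta>0$; letting $\delta\to0$ gives \eqref{Grad:small}.
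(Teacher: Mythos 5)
Your proposal is correct and follows essentially the same route as the paper: test with $\rhot_{h,t}$ and with $\rhot_h$, add, use \eqref{i:ineq4} to turn half the dissipation into a decay term for $\int_\Omega H\,dx$, treat $\norm{\rhot_h}^2$ as forcing via Theorem~\ref{bound-lq}, integrate the resulting linear Gronwall inequality, and convert back to $\norm{\nabla\rho_h}_{0,\beta}^\beta$ with \eqref{i:ineq2}, keeping $\delta$ free for the vanishing-data case. The only cosmetic difference is that you carry $\norm{\rhot_h}^2$ inside the Lyapunov functional, whereas the paper discards $\tfrac{d}{dt}\norm{\rhot_h}^2$ early by writing it as $(\rhot_h,\rhot_{h,t})$ and absorbing with Cauchy's inequality.
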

\begin{proof}
Choosing $w_h=\rhot_{h,t}$ in \eqref{semidiscreteform} leads to 
\beq\label{rhot}
\norm{ \rhot_{h,t}}^2 +\frac 1 2 \ddt \int_\Omega H(x,t) dx  
=(K(|\nabla \rho_h|)\nabla\rho_h, \nabla \pi\phi_t ) +(\ft, \rhot_{h,t}).
\eeq
where $H(x,t)=H(|\nabla \rho_h(x,t)| )$ is defined in \eqref{Hdef}.
 
Adding the two equations \eqref{rhot} and \eqref{dpalpha} gives  
\begin{multline*}
\norm{ \rhot_{h,t}}^2  + \norm{K^{\frac{1}{2}}(|\nabla \rho_h|)\nabla \rho_h}^2
 +\frac 12 \ddt  \Big(\int_\Omega H(x,t)dx  + \norm{\rhot_h}^2\Big) \\
   =   (K|\nabla \rho_h|)\nabla \rho_h,\nabla \pi\phi+\nabla \pi\phi_t  )+  \intd{\ft,\rhot_h + \rhot_{h,t}}.
\end{multline*}

Using Cauchy's inequality gives
\begin{multline*}
\norm{ \rhot_{h,t}}^2  + \norm{K^{\frac{1}{2}}(|\nabla \rho_h|)\nabla \rho_h}^2
+\frac 12 \ddt  \Big(\int_\Omega H(x,t)dx  + \norm{\rhot_h}^2\Big)\\
   \le \frac 12  \norm{K^{\frac{1}{2}}(|\nabla \rho_h|)\nabla \rho_h}^2 
  +\frac  12 \norm{ \nabla \pi\phi +\nabla \pi\phi_t  }^2+ 2\norm{\ft}^2 +\frac 14 \norm{\rhot_h}^2+\frac 1 4 \norm{\rhot_{h,t}}^2, 
\end{multline*}
which implies 
\begin{multline}\label{sdq}
\frac 3 4 \norm{ \rhot_{h,t}}^2  + \frac 12\norm{K^{\frac{1}{2}}(|\nabla \rho_h|)\nabla \rho_h}^2
+\frac 12 \ddt  \Big(\int_\Omega H(x,t)dx  + \norm{\rhot_h}^2\Big)\\
   \le  \norm{ \nabla \pi\phi}^2 +\norm{\nabla \pi\phi_t  }^2+ 2\norm{\ft}^2 +\frac 14 \norm{ \rhot_h}^2.
\end{multline} 
Now using \eqref{i:ineq4}, we find that $$\norm{K^{\frac{1}{2}}(|\nabla \rho_h|)\nabla \rho_h}^2\ge \frac 1 2 \int_\Omega H(x,t) dx.$$ 
This and \eqref{sdq} show that 
\beq\label{key:ineq2}
\begin{aligned}
&\frac 3 4 \norm{ \rhot_{h,t} }^2  + \frac 1 4 \int_\Omega H(x,t) dx +\frac 12 \ddt  \Big(\int_\Omega H(x,t)dx  + \norm{\rhot_h}^2\Big)\\
&\hspace{4cm}\le  \norm{ \nabla \pi\phi }^2+\norm{\nabla \pi\phi_t  }^2+ 2\norm{\ft}^2 +\frac 14 \norm{\rhot_h}^2 .
\end{aligned}
\eeq

Note that $ \frac 12 \frac d{dt}\norm{\rhot_h}^2 = (\rhot_h, \rhot_{h,t})$, again using Cauchy's inequality leads to   
 \begin{multline*}
\frac 34 \norm{ \rhot_{h,t}}^2  +\frac 1 4\int_\Omega H(x,t) dx 
 +\frac 12 \ddt \int_\Omega H(x,t)dx \\
   \le  \norm{ \nabla \pi\phi }^2+\norm{\nabla \pi\phi_t  }^2+ 2\norm{\ft}^2 +\frac 14 \norm{\rhot_h}^2 +\frac 12\norm{\rhot_h}^2+ \frac 12\norm{\rhot_{h,t}}^2.
\end{multline*}
Hence,
\beq\label{keyest2}
\begin{split}
\frac 14 \norm{ \rhot_{h,t}}^2  +\frac 1 4\int_\Omega H(x,t) dx 
 &+\frac 12 \ddt \int_\Omega H(x,t)dx \\
 &\le   \norm{ \nabla \pi\phi}^2+\norm{\nabla \pi\phi_t  }^2 + 2\norm{\ft}^2 + \frac 3 4\norm{ \rhot_h}^2 .
\end{split}
\eeq
This gives 
\begin{multline}\label{Grad-tder-eq}
\ddt \int_\Omega H(x,t) dx+\frac 1 2 \left(\norm{ \rhot_{h,t}}^2+ \int_\Omega H(x,t) dx\right) \le  4\left( \norm{ \nabla \pi\phi}^2+\norm{\nabla \pi\phi_t  }^2 + \norm{\ft}^2 + \norm{ \rhot_h}^2 \right)\\
 \le  C\left( \norm{ \phi}_{1}^2+\norm{\phi_t  }_{1}^2 + \norm{\ft}^2 + \norm{ \rhot_h}^2 \right).
\end{multline}
Dropping the nonnegative term on the left hand side in \eqref{Grad-tder-eq}, using \eqref{res1}, we obtain 
\beq\label{HdiffEq}
\begin{aligned}
  \ddt \int_\Omega H(x,t) dx+ \frac 12  \int_\Omega H(x,t) dx
 &\le  C\left( \norm{\phi}_{1}^2+\norm{\phi_t  }_{1}^2 + \norm{\ft}^2 + \norm{\rhot_h}^2 \right)\\
  &\le  C\left(\norm{\rhot^0}^2+ \norm{\phi_t  }_{1}^2 +  (Env(g(t))^{\frac{2}{\beta}} +1 \right).
\end{aligned}
\eeq
The last inequality is obtained by using the Young's inequality then absorbing $\norm{\ft}^\lambda $ and $\norm{\phi}_1
^2$ to $Env (g(t))^{\frac{2}{\beta}}$. 
  
It is followed by applying Gronwall's inequality to \eqref{HdiffEq} that
\beq\label{Diffineq3}
\begin{split}
\int_\Omega H(x,t) dx & \le e^{-\frac 1 2 t}\int_\Omega H(x,0) dx\\
 &\quad + C\int_0^t e^{-\frac 1 2 (t-s)} \left(\norm{\rhot^0}^2+ \norm{\phi_t  }_{1}^2 +  (Env \, g(t))^{\frac{2}{\beta}} +1 \right) ds.
\end{split}
\eeq
Due to \eqref{i:ineq4} and \eqref{i:ineq2},
\beqs
\begin{split}
\norm{\nabla \rho_h}_{0,\beta}^\beta & \le Ce^{-\frac 1 2 t}\norm{\nabla\rho_h^0}_{0,\beta}^\beta\\
 &\quad + C\int_0^t e^{-\frac 1 2 (t-s)} \left(\norm{\rhot^0}^2+ \norm{\phi_t  }_{1}^2 +  (Env\, g(t))^{\frac{2}{\beta}} +1 \right) ds+C\\
 &\le C+C\norm{\rho^0}_1^2+C\norm{\rhot^0}^2 + C\int_0^t e^{-\frac 1 2 (t-s)} \left(\norm{\phi_t  }_{1}^2 +  (Env\, g(t))^{\frac{2}{\beta}}\right) ds.
\end{split}
\eeqs
This proves \eqref{res2}.

Dropping the nonnegative term on the left hand side of \eqref{Grad-tder-eq}  and using \eqref{ulode} to \eqref{Grad-tder-eq}, we find that  
\beq\label{a}
\begin{split}
\limsup_{t\to\infty}\int_\Omega H(x,t) dx  &\le C  \left[\limsup_{t\to\infty}\left(\norm{\phi}_{1}^2+\norm{\phi_t}_{1}^2 + \norm{\ft}^2\right) +\limsup_{t\to\infty} \norm{ \rhot_h}^2 \right]\\
&\le C  \left[\limsup_{t\to\infty}\left( 1+ g(t)+\norm{\phi_t}_{1}^2\right) +(\limsup_{t\to\infty} g(t))^\frac{2}{\beta}+1 \right]\\
&\le C  \left[\limsup_{t\to\infty}\norm{\phi_t}_{1}^2 +(\limsup_{t\to\infty} g(t))^\frac{2}{\beta}+1 \right].
\end{split}
\eeq
From \eqref{i:ineq2}  and \eqref{i:ineq4} we have 
\beq\label{b}
c_3\left( \frac {\delta}{1+\delta} \right)^a\left( \norm{\nabla \rho_h}_{0,\beta}^\beta- \delta^\beta\right)\le\norm{K^{\frac{1}{2}}(|\nabla \rho_h|)\nabla \rho_h}^2 \le \int_\Omega H(x,t).
\eeq
With $\delta=1$,  combining \eqref{a} and \eqref{b} leads to \eqref{limsupGrad}.

Assume \eqref{small:atinf2} then \eqref{a} and \eqref{b} lead to 
\beqs
 \begin{split}
 \limsup_{t\to \infty}\norm{\nabla \rho_h}_{0,\beta}^\beta &\le C\left( \frac {\delta}{1+\delta} \right)^{-a}  \left[\limsup_{t\to\infty}\left(\norm{\phi}_{1}^2+\norm{\phi_t}_{1}^2 + \norm{\ft}^2\right)+\limsup_{t\to\infty} \norm{ \rhot_h}^2 \right]+C\delta^\beta\\
 &= C\delta^\beta\to 0 \quad \text { as } \delta\to 0.
 \end{split}
 \eeqs
 We complete the proof.   
\end{proof}

The result  \eqref{res2} in Theorem \ref{Est4Grad} provides an estimate for the gradient of density at the given time $t=T$, which includes information on boundary data for all time $t \le T.$  When $T$ is large, it needs to be expressed mainly in terms of the boundary data on the interval $[T-1,T],$ uniformly for all $T$.
\begin{theorem}\label{UniGrad}
Assume $\tilde \rho_h$ a solution to the problem \eqref{semidiscreteform}.  The following inequalities hold uniformly 
\begin{enumerate} 
\item[i.]  For all $t\ge 1$, 
\beq\label{gw2}
\begin{split}
\int_{t-1}^{t} \norm{\nabla \rho_h(\tau)}_{0,\beta}^{\beta}d\tau \le C\left (1+\norm{ \rhot_h(t-1)}^2 +\int_{t-1}^{t}\left(\norm{\phi(\tau)}_{1}^2 +\norm{\ft(\tau)}^\lambda \right) d\tau\right);
\end{split}
\eeq
\item[ii.] For all $t\ge 1$,
\begin{multline} \label{wteq8}
\int_{t-\frac 12}^t\norm{\rhot_{h,t}(\tau)}^2 d\tau+  \norm{\nabla \rho_h(t)}_{0,\beta}^{\beta} \le C\norm{ \rhot_h(t-1)}^2\\
+C \left(1 +\int_{t-1}^{t} \left(\norm{\phi_t(\tau)}_{1}^2+ \norm{\ft(\tau)}^\lambda\right)  d\tau \right);
\end{multline}
\item[iii.] For all $t\ge 1$,
\beq\label{wteq9}
\begin{aligned}
& \norm{\nabla \rho_h(t)}_{0,\beta}^{\beta}\le C\left(1+\norm{\rhot^0}^2+ (Env\, g(t))^\frac{2}{\beta}+\int_{t-1}^{t}\norm{\phi_t(\tau)}_{1}^2d\tau\right).
\end{aligned}
\eeq
\end{enumerate}
\end{theorem}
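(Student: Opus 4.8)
The three estimates localize the global bounds of Theorems~\ref{bound-lq} and \ref{Est4Grad} to the unit time window $[t-1,t]$. The plan is to integrate the energy identities over this window and then invoke a uniform (Temam-type) Gronwall argument to upgrade a time-averaged bound into a pointwise bound at the right endpoint $t$. Throughout I would use that, by \eqref{i:ineq4} together with \eqref{iineq2}, the quantity $\int_\Omega H(x,\tau)\,dx$ is comparable to $\norm{\nabla\rho_h(\tau)}_{0,\beta}^\beta$ up to additive and multiplicative constants, so bounds transfer between the two; I also use $\norm{\ft}\le 1+\norm{\ft}^\lambda$ and $\norm{\ft}^2\le 1+\norm{\ft}^\lambda$ (valid since $\lambda\ge 2$), and $\norm{\nabla\pi\phi}_{0,\beta}\le C\norm{\nabla\pi\phi}\le C\norm{\phi}_1$ from the embedding $L^2\hookrightarrow L^\beta$ on the bounded domain $\Omega$ and \eqref{L2proj0}.

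For part (i), I would integrate the basic energy inequality \eqref{diff-neq} over $[t-1,t]$. Dropping the nonnegative endpoint term $\norm{\rhot_h(t)}^2$ and moving $\norm{\rhot_h(t-1)}^2$ to the right bounds $\int_{t-1}^t\norm{K^{\frac12}(|\nabla\rho_h|)\nabla\rho_h}^2\,d\tau$ by $C\norm{\rhot_h(t-1)}^2$ plus the window integrals of the data and a constant. The lower bound \eqref{trhoEst} with $\delta=1$ then replaces this dissipation integral by $\int_{t-1}^t\norm{\nabla\rhot_h}_{0,\beta}^\beta\,d\tau$, up to the boundary error $\int_{t-1}^t\norm{\nabla\pi\phi}_{0,\beta}^\beta\,d\tau$ and the window length; writing $\nabla\rho_h=\nabla\rhot_h+\nabla\pi\phi$ and using $(a+b)^\beta\le 2^{\beta-1}(a^\beta+b^\beta)$ converts the left side to $\int_{t-1}^t\norm{\nabla\rho_h}_{0,\beta}^\beta\,d\tau$, which is \eqref{gw2}.

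For part (ii), set $y(\tau)=\int_\Omega H(x,\tau)\,dx$ and use \eqref{Grad-tder-eq}, which has the form $y'(\tau)+\tfrac12\norm{\rhot_{h,t}(\tau)}^2+\tfrac12 y(\tau)\le G(\tau)$ with $G=C(\norm{\phi}_1^2+\norm{\phi_t}_1^2+\norm{\ft}^2+\norm{\rhot_h}^2)$. Multiplying by the integrating factor $e^{\tau/2}$, integrating from $s$ to $t$, and using $e^{(\tau-t)/2}\le1$ gives $y(t)\le y(s)+\int_{t-1}^t G\,d\tau$ for every $s\in[t-1,t]$; averaging in $s$ over the window produces the pointwise bound $y(t)\le\int_{t-1}^t y\,ds+\int_{t-1}^t G\,d\tau$. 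To extract the dissipation term I would instead integrate the differential inequality over $[t-\frac12,t]$ and drop nonnegative terms, bounding $\int_{t-\frac12}^t\norm{\rhot_{h,t}}^2\,d\tau$ by $y(t-\tfrac12)+\int_{t-1}^t G\,d\tau$, where $y(t-\tfrac12)$ is again controlled by the same averaging. It then remains to estimate the right-hand pieces: $\int_{t-1}^t y\,ds$ is handled by part (i) through the comparison $H\simeq\norm{\nabla\rho_h}_{0,\beta}^\beta$, while the $\norm{\rhot_h}^2$ inside $\int_{t-1}^t G$ is controlled on the window by integrating the nonlinear inequality from the proof of Theorem~\ref{bound-lq} over $[t-1,\tau]$ and discarding the dissipative term, which gives $\sup_{[t-1,t]}\norm{\rhot_h}^2\le\norm{\rhot_h(t-1)}^2+C\int_{t-1}^t(\norm{\phi}_1^2+\norm{\ft}^\lambda)\,d\tau+C$. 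Collecting the window integrals of the data yields \eqref{wteq8}.

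Finally, part (iii) follows from part (ii) by trading the window data for global quantities: I would bound $\norm{\rhot_h(t-1)}^2$ using the stability estimate \eqref{res1} and the monotonicity of the envelope, so that $\norm{\rhot_h(t-1)}^2\le C(1+\norm{\rhot^0}^2+(Env\,g(t))^{2/\beta})$, and bound $\int_{t-1}^t\norm{\ft(\tau)}^\lambda\,d\tau\le\int_{t-1}^t g(\tau)\,d\tau\le Env\,g(t)\le 1+(Env\,g(t))^{2/\beta}$ using $2/\beta>1$, which gives \eqref{wteq9}. The main obstacle is the uniform Gronwall step in part (ii): the energy identities yield only time-averaged control over $[t-1,t]$, and the crux is the two-stage integration — once with the integrating factor to recover the pointwise value at $t$, once over the half-window to extract the $\rhot_{h,t}$ dissipation — that upgrades this into a genuinely uniform pointwise-in-$t$ bound.
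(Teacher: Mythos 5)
Your proposal is correct and follows essentially the same route as the paper: part (i) by integrating \eqref{diff-neq} over the window and invoking \eqref{iineq2}, part (ii) by the uniform-Gronwall device of integrating the $\rhot_{h,t}$-tested energy inequality from $s$ to $t$ and then averaging in $s$ over $[t-1,t]$ (the paper extracts the dissipation directly from the resulting double integral via $\int_{t-1}^{t}\int_s^t \ge \tfrac12\int_{t-1/2}^t$, whereas you split this into two integrations — a cosmetic difference), and part (iii) by feeding \eqref{res1} and the monotone envelope into part (ii). The bookkeeping of the lower-order data terms matches the paper's as well, so no further comment is needed.
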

\begin{proof}
Integrating \eqref{diff-neq} in time from $t-1$ to $t$,  we obtain  
\beq\label{g1w}
\norm{ \rhot_h(t)}^2+\int_{t-1}^{t}\norm{K^{\frac{1}{2}}(|\nabla \rho_h|)\nabla \rho_h}^2d\tau \le \norm{ \rhot_h(t-1)}^2+C\int_{t-1}^{t}\left(\norm{\nabla \pi \phi}^2 +\norm{\ft}+\norm{\ft}^\lambda \right) d\tau.
\eeq
Neglecting nonnegative term on the left hand side of \eqref{g1w} shows that  
\beqs
\int_{t-1}^{t}\norm{K^{\frac{1}{2}}(|\nabla \rho_h|)\nabla \rho_h}^2d\tau \le \norm{ \rhot_h(t-1)}^2+C\int_{t-1}^{t}\left(\norm{\nabla \pi \phi}^2 +\norm{\ft}+\norm{\ft}^\lambda \right) d\tau.
\eeqs
Using \eqref{iineq2} and Young's inequality, we obtain \eqref{gw2}.  

Following by applying Cauchy's inequality to \eqref{rhot} that  
\beq
\norm{ \rhot_{h,t}}^2 +\frac 1 2 \ddt \int_\Omega H(x,t) dx  
\le\frac 12 \norm{K^{\frac{1}{2}}(|\nabla \rho_h|)\nabla\rho_h}^2+\frac 12\left(\norm{\nabla \pi\phi_t}^2 +\norm{\ft}^2 +\norm{ \rhot_{h,t}}^2\right).
\eeq 
In virtue of \eqref{i:ineq4}, we find that  
\beq\label{rt1}
\norm{ \rhot_{h,t}}^2 + \ddt \int_\Omega H(x,t) dx  
\le\int_\Omega  H(x,t) dx +\norm{\nabla \pi\phi_t}^2 +\norm{\ft}^2.
\eeq 
Integrating \eqref{rt1} in $\tau$ from $s$ to $t$ where $s\in[t-1,t]$, we have 
\beq\label{wteq5}
\begin{aligned}
\int_s^t \norm{\rhot_{h,t}}^2 d\tau &+ \int_\Omega H(x,t)dx \\
& \le \int_\Omega H(x,s)dx+\int_s^t \int_\Omega H(x,s)dx d\tau + \int_s^t\left( \norm{\nabla \pi \phi_t}^2 +\norm{\ft}^2  \right)  d\tau\\
& \le \int_\Omega H(x,s)dx+\int_{t-1}^t\int_\Omega H(x,t)dxd\tau  +\int_{t-1}^t \left(\norm{\nabla\pi \phi_t}^2+ \norm{\ft}^2\right) d\tau.
\end{aligned}
\eeq
Then integrating \eqref{wteq5} in $s$ from $t-1$ to $t$ gives 
\beq\label{wteq6}
\begin{aligned}
\int_{t-1}^t\int_s^t\norm{\rhot_{h,t} }^2 d\tau ds + \int_\Omega H(x,t)dx \le 2\int_{t-1}^t\int_\Omega H(x,\tau)dxd\tau +\int_{t-1}^t \left(\norm{\nabla\pi\phi_t}^2+ \norm{\ft}^2\right) d\tau.
\end{aligned}
\eeq
We bound the right hand side in \eqref{wteq6}  using \eqref{i:ineq4}, \eqref{gw2} and Young's inequality to obtain    
\beq\label{gw3}
\begin{aligned}
\int_{t-1}^t\int_s^t\norm{\rhot_{h,t} }^2 d\tau ds + \int_\Omega H(x,t)dx  \le 2\norm{ \rhot_h(t-1)}^2+C \Big(1 +\int_{t-1}^{t} \big(\norm{ \phi_t}_1^2+ \norm{\ft}^\lambda\big)  d\tau \Big).
\end{aligned}
\eeq
The first term of \eqref{gw3} is bounded by   
\beq\label{ft0}
\begin{aligned}
\int_{t-1}^t\int_s^t\norm{\rhot_{h,t} }^2 d\tau ds 
 \ge \int_{t-1}^{t-\frac 12}\int_{t-\frac 12}^t\norm{\rhot_{h,t}}^2  d\tau ds\ge  \frac 12\int_{t-\frac 12}^t\norm{\rhot_{h,t}}^2 d\tau.
\end{aligned}
\eeq
Combining \eqref{gw3}, \eqref{ft0} and using \eqref{iineq2} we obtain \eqref{wteq8}.

The inequality  \eqref{wteq9} follows by using \eqref{res1} to bound the first term of the right hand side in \eqref{wteq8}.
\end{proof}

Now we prove the time derivative of pressure is bounded. 
\begin{theorem}\label{phderv} Let $0<t_0<1,$ assume $\tilde \rho_h$ solves  the semidiscrete problem \eqref{semidiscreteform}. Then, there exists a positive constant $C$ such that for all $t\ge t_0$,        
\beq\label{mid1}
 \begin{aligned}
   \norm{\rhot_{h,t}(t)}^2&\le C\Bb(t).
   \end{aligned}
   \eeq 
  where 
  \beq\label{Updef}
 \begin{aligned}
   \Bb(t)= t_0^{-1}e^{-\frac 14 (t-t_0)} \left\{1+\norm{\rho^0}_1^2+ \norm{\phi^0}^2+\int_{0}^{t_0} \big(   \norm{\phi_t(s)}_{1}^2+(Env \, g(s))^{\frac 2\beta}\big) ds\right\}\\
  +\int_{0}^t e^{-\frac 14(t-s)} \Big(1+\norm{\rhot^0}^2+ \norm{\phi_t(s)}_{1}^2+ \norm{\ft_t(s)}^2+(Env \, g(s))^{\frac 2\beta}\Big) ds.
   \end{aligned}
   \eeq 
    Furthermore,
 \beq\label{limsup:Rhot}
 \limsup_{t\to\infty} \norm{\rhot_{h,t}(t)}^2 \le C \Big(1+ (\limsup_{t\to\infty}g(t))^{\frac 2 \beta} + \limsup_{t\to\infty}\norm{\phi_t(t)}_{1}^2+ \limsup_{t\to\infty}\norm{\ft_t(t)}^2\Big). 
 \eeq
  \end{theorem}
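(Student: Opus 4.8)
The plan is to differentiate the semidiscrete equation \eqref{semidiscreteform} once more in time and to test the result against $w_h=\rhot_{h,t}$. Differentiation gives
\beqs
(\rhot_{h,tt},w_h)+\Big(\ddt\big[K(|\nabla\rho_h|)\nabla\rho_h\big],\nabla w_h\Big)=(\ft_t,w_h),
\eeqs
so that $w_h=\rhot_{h,t}$ turns the first term into $\frac12\ddt\norm{\rhot_{h,t}}^2$. Since $\rho_h=\rhot_h+\pi\phi$ I would write $\nabla\rhot_{h,t}=\nabla\rho_{h,t}-\nabla\pi\phi_t$ and split the nonlinear term into a part tested against $\nabla\rho_{h,t}$ and a boundary correction tested against $\nabla\pi\phi_t$.

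The key step is the coercivity of the differentiated nonlinear term. Writing $A(y)=K(|y|)y$, its Jacobian $DA(y)$ is symmetric, and differentiating the monotonicity inequality \eqref{Qineq} and the Lipschitz bound \eqref{Lipchitz} yields $DA(y)\xi\cdot\xi\ge(\beta-1)K(|y|)|\xi|^2\ge0$ and $DA(y)\xi\cdot\xi\le c_5|\xi|^2$. Applying Cauchy--Schwarz in the positive semidefinite bilinear form associated with $DA(\nabla\rho_h)$ to the correction $-\int_\Omega DA(\nabla\rho_h)\nabla\rho_{h,t}\cdot\nabla\pi\phi_t\,dx$ absorbs half of the good term and leaves only $\frac{c_5}{2}\norm{\nabla\pi\phi_t}^2$, so that
\beqs
\frac12\ddt\norm{\rhot_{h,t}}^2+\frac{\beta-1}{2}\norm{K^{\frac12}(|\nabla\rho_h|)\nabla\rho_{h,t}}^2\le(\ft_t,\rhot_{h,t})+C\norm{\nabla\pi\phi_t}^2.
\eeqs

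Next I would turn the weighted-gradient term into genuine decay of $\norm{\rhot_{h,t}}^2$. Using $\nabla\rho_{h,t}=\nabla\rhot_{h,t}+\nabla\pi\phi_t$ and $K\le a_0^{-1}$ gives $\norm{K^{\frac12}\nabla\rho_{h,t}}^2\ge\frac12\norm{K^{\frac12}\nabla\rhot_{h,t}}^2-C\norm{\nabla\pi\phi_t}^2$, and the weighted Poincar\'e--Sobolev embedding \eqref{embL2} applied to $u=\rhot_{h,t}$ (which vanishes on $\Gamma$) bounds $\norm{\rhot_{h,t}}^2$ by $\norm{K^{\frac12}\nabla\rhot_{h,t}}^2$ times the weight $(1+\norm{K^{\frac12}(|\nabla\rho_h|)|\nabla\rho_h|}^2)^\gamma$, which by \eqref{i:ineq4} is controlled by $(1+\int_\Omega H(x,t)\,dx)^\gamma$ and hence by the a priori gradient bounds of Theorems \ref{Est4Grad} and \ref{UniGrad}. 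Estimating $(\ft_t,\rhot_{h,t})$ and this weight by Young's inequality and routing the weight's time growth into the $(Env\,g)^{2/\beta}$ forcing produces a differential inequality
\beqs
\ddt\norm{\rhot_{h,t}}^2+\frac14\norm{\rhot_{h,t}}^2\le C\Big(1+\norm{\rhot^0}^2+\norm{\phi_t}_1^2+\norm{\ft_t}^2+(Env\,g(t))^{\frac2\beta}\Big)=:F(t).
\eeqs
Integrating against the factor $e^{t/4}$ (or invoking Lemma \ref{ODE2}) yields the Duhamel part of $\Bb(t)$, and \eqref{limsup:Rhot} follows from \eqref{ulode} together with $\limsup_{t\to\infty}\int_0^t e^{-(t-s)/4}F(s)\,ds\le 4\limsup_{s\to\infty}F(s)$.

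Finally, the factor $t_0^{-1}$ encodes parabolic smoothing near $t=0$, since $\norm{\rhot_{h,t}(0)}$ is not controlled by the data. I would recover it by multiplying the differential inequality by $t$, rewriting it as $\ddt(t\norm{\rhot_{h,t}}^2)+\frac t4\norm{\rhot_{h,t}}^2\le\norm{\rhot_{h,t}}^2+tF(t)$, and integrating over $[0,t_0]$; using $s\le t_0<1$ this bounds $t_0\norm{\rhot_{h,t}(t_0)}^2$ by $\int_0^{t_0}\norm{\rhot_{h,t}}^2\,ds$ plus $\int_0^{t_0}F(s)\,ds$, and the first integral is controlled by integrating the already-established energy inequality \eqref{Grad-tder-eq} in time and bounding $\int_\Omega H(x,0)\,dx$ by $\norm{\rho^0}_1^2$. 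This reproduces exactly the bracket multiplying $t_0^{-1}e^{-(t-t_0)/4}$ in \eqref{Updef}. The step I expect to be the main obstacle is the embedding step: because the coercive term $\norm{K^{\frac12}\nabla\rhot_{h,t}}^2$ is degenerate, extracting the constant rate $\frac14$ for $\norm{\rhot_{h,t}}^2$ requires the uniform-in-time gradient estimates of Theorem \ref{UniGrad} to keep the weight $(1+\int_\Omega H)^\gamma$ from destroying the rate, and its growth must be charged to the $(Env\,g)^{2/\beta}$ forcing rather than to the coefficient of $\norm{\rhot_{h,t}}^2$.
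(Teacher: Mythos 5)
Your setup (differentiate \eqref{semidiscreteform} in $t$, test with $\rhot_{h,t}$, control the $K'$ contribution via \eqref{i:ineq3} so that a fraction of $\norm{K^{1/2}(|\nabla\rho_h|)\nabla\rho_{h,t}}^2$ survives on the left) matches the paper's first step, and your averaging device for the $t_0^{-1}$ smoothing factor is in the same spirit as the paper's integration over $t'\in[0,t_0]$. The gap is in the step you yourself flag as the main obstacle, and it is not repairable as described. To pass from the degenerate coercive term to the differential inequality $\ddt\norm{\rhot_{h,t}}^2+\tfrac14\norm{\rhot_{h,t}}^2\le F(t)$ you invoke \eqref{embL2}, which gives $\norm{\rhot_{h,t}}^2\le c_6\norm{K^{1/2}(|\nabla\rho_h|)\nabla\rhot_{h,t}}^2\,W(t)$ with $W(t)=(1+\norm{K^{1/2}(|\nabla\rho_h|)\,|\nabla\rho_h|}^2)^\gamma$. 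This yields a decay coefficient $c/W(t)$, and $W(t)$ is only bounded by data-dependent quantities such as $\Ff(t)$ — it is not bounded by a universal constant. You cannot ``charge the weight's growth to the $(Env\,g)^{2/\beta}$ forcing'': writing $\tfrac{c}{W}y=\tfrac14 y-(\tfrac14-\tfrac{c}{W})y$ and moving the deficit to the right would require a bound on $y=\norm{\rhot_{h,t}}^2$ itself, which is circular; and applying Lemma \ref{ODE2} with $h(t)=c/W(t)$ produces a bound involving $Env(FW)$ and the uncontrolled quantity $\norm{\rhot_{h,t}(0)}^2$, not the fixed kernel $e^{-\frac14(t-s)}$ of \eqref{Updef}.

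The paper's resolution is structurally different and is the idea you are missing: it does not extract any decay from the differentiated equation at all (indeed, in \eqref{I-1}--\eqref{I-3} the entire coercive term is consumed, leaving only \eqref{midstep0}, which has no damping). Instead it \emph{adds} the zeroth-order energy identity \eqref{keyest2}, obtained by testing the undifferentiated equation with $\rhot_{h,t}$; there $\norm{\rhot_{h,t}}^2=(\rhot_{h,t},\rhot_{h,t})$ appears on the left with a universal coefficient, with no embedding and no weight. Working with the combined functional $\mathcal E(t)=\norm{\rhot_{h,t}(t)}^2+\int_\Omega H(x,t)\,dx$ then gives $\ddt\mathcal E+\tfrac14\mathcal E\le C(\norm{\rhot_h}^2+\norm{\phi}_1^2+\norm{\phi_t}_1^2+\norm{\ft}^2+\norm{\ft_t}^2)$ with constants independent of the data, from which \eqref{Updef} and \eqref{limsup:Rhot} follow by Duhamel, the averaging over $[0,t_0]$, and Theorem \ref{bound-lq}. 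You should replace your embedding step by this coupling with \eqref{keyest2}; the rest of your outline then goes through.
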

\begin{proof}
Differentiating \eqref{semidiscreteform} with respect $t$ yields that
 \begin{align*}
(\rhot_{h,tt}, w_h) + \intd{K(|\nabla \rho_h|)\nabla \rho_{h,t},\nabla w_h} =- \intd{K'(|\nabla \rho_h|)\frac{\nabla \rho_{h}\cdot \nabla \rho_{h,t}}{|\nabla \rho_h|}\nabla \rho_h ,\nabla w_h } + \intd{\ft_t, w_h}.
  \end{align*}
Then choosing $ w_h= \rhot_{h,t},$ we obtain
    \begin{multline*}
  \frac{1}{2}\ddt \norm{\rhot_{h,t}}^2 +\norm{K^{\frac{1}{2}}(|\nabla \rho_h|)\nabla \rho_{h,t}}^2  =\intd{K(|\nabla \rho_h|)\nabla \rho_{h,t},\nabla \pi\phi_t}\\
   - \intd{K'(|\nabla \rho_h|)\frac{\nabla \rho_{h}\cdot \nabla \rho_{h,t}}{|\nabla \rho_h|}\nabla \rho_h ,\nabla \rho_{h,t}   }+ \intd{K'(|\nabla \rho_h|)\frac{\nabla \rho_{h}\cdot \nabla \rho_{h,t}}{|\nabla \rho_h|}\nabla \rho_h ,\nabla \pi\phi_t   }+ \intd{ \ft_t, \rhot_{h,t} }.
  \end{multline*}
  The Cauchy inequality and the upper boundedness of $K(\cdot)$ give
  \beq\label{I-1}
   \left|\intd{K(|\nabla \rho_h|)\nabla \rho_{h,t},\nabla \pi\phi_t}\right| \le \frac{1-a}{2}\norm{K^{\frac{1}{2}}(|\nabla \rho_h|)\nabla \rho_{h,t}}^2 +C\norm{\nabla \pi\phi_t  }^2.
  \eeq 
Following from \eqref{i:ineq3} that
   \beq\label{I-2}
  \left| - \intd{K'(|\nabla \rho_h|)\frac{\nabla \rho_{h}\cdot \nabla \rho_{h,t}}{|\nabla \rho_h|}\nabla \rho_h ,\nabla \rho_{h,t}   }\right|\le a\norm{K^{\frac{1}{2}}(|\nabla \rho_h|)\nabla \rho_{h,t}}^2.
  \eeq
 Combining \eqref{i:ineq3}, Cauchy's inequality and the upper boundedness of $K(\cdot)$ gives    
  \beq\label{I-3}
  \begin{split}
  \left| \intd{K'(|\nabla \rho_h|)\frac{\nabla \rho_{h}\cdot \nabla \rho_{h,t}}{|\nabla \rho_h|}\nabla \rho_h ,\nabla \pi\phi_t   } \right|&\le a\int_\Omega K(|\nabla \rho_h|)|\nabla \rho_{h,t}| |\nabla \pi\phi_t | dx\\
          &\le \frac{1-a}{2}  \norm{K^{\frac{1}{2}}(|\nabla \rho_h|)\nabla \rho_{h,t}}^2+C\norm{\nabla \pi\phi_t}^2.
  \end{split}
  \eeq
Using Cauchy's inequality,  
\beq\label{I-4}
\left| ( \ft_t,\rhot_{h,t} ) \right|
\le \frac 1 8 \norm{\rhot_{h,t}}^2 + C \norm{\ft_t}^2.
\eeq
  Above estimates lead to  
  \beq\label{midstep0}
\frac{1}{2}\ddt \norm{\rhot_{h,t}}^2  \le C\norm{\nabla \pi\phi_t}^2 + \frac 1 8 \norm{\rhot_{h,t}}^2 + C \norm{\ft_t}^2.
 \eeq
Combining \eqref{midstep0} and \eqref{keyest2}, we obtain 
 \beq
\begin{aligned}
 &\frac 14 \left(\norm{ \rhot_{h,t}}^2  +\int_\Omega H(x,t) dx\right) 
 +\frac 12 \ddt\left( \int_\Omega H(x,t)dx + \norm{\rhot_{h,t}}^2 \right)\\
  &\qquad \le  C\left(\norm{ \nabla \pi\phi }^2+\norm{\ft}^2 + \norm{\rhot_h}^2 + \norm{\nabla \pi\phi_t}^2 +   \norm{\ft_t}^2\right) + \frac 1 8 \norm{\rhot_{h,t}}^2.
 \end{aligned}
 \eeq
   Define $$\mathcal E(t)= \norm{\rhot_{h,t}}^2+  \int_\Omega H(x,t) dx. $$
%
%
 Using \eqref{res1}, we obtain    
 \beq\label{bb}
\begin{split}
\ddt \mathcal E(t) +\frac 14 \mathcal E(t) \le C\left( \norm{\rhot_h}^2 + \norm{\phi}_{1}^2+\norm{\phi_t}_{1}^2+\norm{\ft}^2 + \norm{\ft_t}^2\right).
 \end{split} 
  \eeq
For any $t\ge t_0 \ge t'>0$, integrating \eqref{bb} from $t'$ to $t$, we find that  
  \beq
\begin{split}
\mathcal E(t) \le  e^{-\frac 14 (t-t')}\mathcal E(t') +  C\int_{0}^t e^{-\frac 14(t-s)} \left[ \norm{\rhot_h}^2 + \norm{\phi}_{1}^2+\norm{\phi_t}_{1}^2+\norm{\ft}^2 + \norm{\ft_t}^2\right] ds.
  \end{split}
  \eeq
 Integrating in $t'$ from $0$ to $t_0$ gives
\beqs
 \begin{split}
 t_0 \mathcal E(t)  \le  e^{-\frac 14 (t-t_0)} \int_0^{t_0}   \mathcal E(t') dt' +t_0C\int_{0}^t e^{-\frac 14(t-s)} \left[ \norm{\rhot_h}^2 + \norm{\phi}_{1}^2+\norm{\phi_t}_{1}^2+\norm{\ft}^2 + \norm{\ft_t}^2\right] ds.
 \end{split}
  \eeqs
  Integrating \eqref{Grad-tder-eq} from $0$ to $t_0$ we obtain  
\beq\label{E:est}
\int_0^{t_0} \mathcal E(t') dt'  \le \int_\Omega H(x,0) dx+  \int_{0}^{t_0} \Big[ \norm{ \phi}_{1}^2+\norm{\phi_t  }_{1}^2 + \norm{\ft}^2 + \norm{ \rhot_h}^2 \Big] ds.
\eeq
Therefore,
\beq\label{al1}
 \begin{aligned}
  \mathcal E(t) &\le t_0^{-1}e^{-\frac 14 (t-t_0)} \Big\{\int_\Omega H(x,0) dx+  \int_{0}^{t_0} \Big[ \norm{ \phi}_{1}^2+\norm{\phi_t  }_{1}^2 + \norm{\ft}^2 + \norm{ \rhot_h}^2 \Big] ds\Big\}\\
  &+C\int_{0}^t e^{-\frac 14(t-s)} \left[\norm{\rhot_h}^2 + \norm{\phi}_{1}^2+\norm{\phi_t}_{1}^2+\norm{\ft}^2 + \norm{\ft_t}^2\right] ds.
   \end{aligned}
   \eeq
Then the inequality \eqref{mid1} follows from \eqref{al1},\eqref{res1}, \eqref{iineq2} and \eqref{i:ineq4}. 

Now applying Gronwall's inequality in \eqref{ulode} for \eqref{bb} yields   
\beqs
  \begin{split}
 \limsup_{t\to\infty} \mathcal E(t) &\le C \limsup_{t\to\infty}\norm{\rhot_h}^2 + C\limsup_{t\to\infty}\left(\norm{\phi}_{1}^2+\norm{\phi_t}_{1}^2+\norm{\ft}^2 +\norm{\ft_t}^2\right)\\
 &\le C \left( (\limsup_{t\to\infty}g(t))^{\frac 2 \beta} + \limsup_{t\to\infty}\norm{\phi_t}_{1}^2+ \limsup_{t\to\infty}\norm{\ft_t}^2+1\right). 
 \end{split}
\eeqs
The last inequality  is followed by using \eqref{limsup:rhot}. Then the proof is complete.  
\end{proof}

Next, we derive a uniformly bound for $\rhot_{h,t}$.
\begin{theorem}\label{Uni-rhot} Suppose $\tilde \rho_h$ solves  the semidiscrete problem \eqref{semidiscreteform}. Then, there exists a positive constant $C$ such that for all $t\ge 1$,  
\beq\label{q34}
\norm{\rhot_{h,t}(t) }^2
\le C\left\{1+\norm{\rhot^0}^2+(Env\ \ g(t))^\frac{2}{\beta} +\int_{t-1}^{t} \left(\norm{\phi_t(s)}_{1}^2+  \norm{\ft_t(s)}^2 \right)  ds \right\}.
\eeq
\end{theorem}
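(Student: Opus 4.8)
The plan is to turn the pointwise decay estimate \eqref{bb} for the energy $\mathcal E(t)=\norm{\rhot_{h,t}}^2+\int_\Omega H(x,t)\,dx$ into a bound depending on the data only through the sliding window $[t-1,t]$, by a uniform Gronwall argument. Since $H\ge 0$ we have $\norm{\rhot_{h,t}(t)}^2\le\mathcal E(t)$, so it suffices to bound $\mathcal E(t)$. Writing $F(t)=C\big(\norm{\rhot_h}^2+\norm{\phi}_1^2+\norm{\phi_t}_1^2+\norm{\ft}^2+\norm{\ft_t}^2\big)$ for the right-hand side of \eqref{bb}, I would first rewrite \eqref{bb} as $\ddt(e^{t/4}\mathcal E)\le e^{t/4}F$ and integrate from an arbitrary $s$ up to $t$; bounding the exponential weights by $1$ gives $\mathcal E(t)\le\mathcal E(s)+\int_{t-1/2}^t F(\tau)\,d\tau$ for every $s\in[t-\tfrac12,t]$. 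Integrating this inequality in $s$ over the half-window $[t-\tfrac12,t]$ (of length $\tfrac12$) then yields
\beqs
\mathcal E(t)\le 2\int_{t-1/2}^t\mathcal E(s)\,ds+\int_{t-1/2}^t F(\tau)\,d\tau,
\eeqs
which reduces the proof to estimating these two integrals by the right-hand side of \eqref{q34}.

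The forcing integral is the easy part: the terms $\norm{\phi_t}_1^2$ and $\norm{\ft_t}^2$ already appear in \eqref{q34}, the term $\norm{\rhot_h}^2$ is controlled pointwise by the stability bound \eqref{res1}, and $\norm{\phi}_1^2+\norm{\ft}^2$ is absorbed into $1+g(\tau)$ via the definition \eqref{gdef} of $g$ together with the elementary inequality $x^2\le 1+x^\lambda$, which holds since $\lambda=\beta/(\beta-1)\ge 2$. Because $g(\tau)\le Env\,g(t)\le 1+(Env\,g(t))^{2/\beta}$ for $\tau\le t$, enlarging the domain of integration from $[t-\tfrac12,t]$ to $[t-1,t]$ produces exactly the contribution $1+\norm{\rhot^0}^2+(Env\,g(t))^{2/\beta}+\int_{t-1}^t\big(\norm{\phi_t}_1^2+\norm{\ft_t}^2\big)\,ds$ demanded by \eqref{q34}.

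The decisive term is the energy integral $\int_{t-1/2}^t\mathcal E(s)\,ds$, and the reason for running the final integration over the half-window $[t-\tfrac12,t]$ rather than $[t-1,t]$ is precisely so that its time-derivative part matches an already available estimate: $\int_{t-1/2}^t\norm{\rhot_{h,t}(s)}^2\,ds$ is bounded directly by \eqref{wteq8}, whose right-hand side, after replacing $\norm{\rhot_h(t-1)}^2$ through \eqref{res1} and the monotonicity of $Env\,g$, is of the desired form. For the remaining part I would use $\int_{t-1/2}^t\int_\Omega H(x,s)\,dx\,ds\le\int_{t-1}^t\int_\Omega H(x,s)\,dx\,ds$, dominate $\int_\Omega H\,dx$ by $C\norm{\nabla\rho_h}_{0,\beta}^\beta$ via \eqref{i:ineq4} and \eqref{iineq2}, and then invoke \eqref{gw2} to bound $\int_{t-1}^t\norm{\nabla\rho_h}_{0,\beta}^\beta\,d\tau$; the factor $\norm{\rhot_h(t-1)}^2$ occurring there is once more absorbed by \eqref{res1}. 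Assembling the three bounds, converting each projection norm via \eqref{L2proj0}, and using that $Env\,g$ is increasing then gives \eqref{q34}.

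The only genuine obstacle is the control of $\int\norm{\rhot_{h,t}}^2$ across the window: the sharp derivative estimate \eqref{wteq8} reaches back only to $t-\tfrac12$, so a naive uniform Gronwall run over $[t-1,t]$ would leave an uncontrolled piece $\int_{t-1}^{t-1/2}\norm{\rhot_{h,t}}^2\,ds$. Choosing the integration window to be $[t-\tfrac12,t]$ sidesteps this entirely and keeps every data term confined to $[t-1,t]$, at the harmless cost of the factor $2$ in front of the energy integral; all remaining steps are routine applications of the properties of $K$ and $H$ collected in Section~\ref{Intrsec} and of the a~priori bounds of the present section.
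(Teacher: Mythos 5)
Your argument is correct and follows essentially the same route as the paper: a uniform-Gronwall double integration over the half-window $[t-\tfrac12,t]$, with the resulting term $\int_{t-1/2}^t\norm{\rhot_{h,t}}^2\,ds$ controlled by \eqref{wteq8} and the remaining data terms absorbed via \eqref{res1} and the monotonicity of $Env\,g$. The only (harmless) difference is that you start from the energy inequality \eqref{bb} for $\mathcal E(t)=\norm{\rhot_{h,t}}^2+\int_\Omega H\,dx$, which forces you to additionally bound $\int_{t-1}^t\int_\Omega H\,dx\,ds$ through \eqref{i:ineq4} and \eqref{gw2}, whereas the paper integrates the simpler inequality \eqref{midstep0} for $\norm{\rhot_{h,t}}^2$ alone and so never needs to carry the $H$-term.
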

\begin{proof}
Integrating \eqref{midstep0} in time from $s$ to $t$ where $t-\frac 12\le s\le t$, we have 
\beq\label{q31}
\begin{aligned}
\norm{\rhot_{h,t}(t) }^2 
&\le \norm{\rhot_{h,t}(s) }^2 +\frac 14  \int_s^t\norm{\rhot_{h,t}(\tau) }^2 d\tau +C\int_s^t\big(\norm{\nabla \pi \phi_t }^2+\norm{\ft_t}^2 \big)d\tau\\
&\le \norm{\rhot_{h,t}(s) }^2 +\frac 14  \int_{t-\frac 12}^t\norm{\rhot_{h,t}(\tau) }^2 d\tau +C\int_{t-1}^t\big(\norm{\nabla \pi \phi_t }^2+\norm{\ft_t}^2 \big)d\tau.
\end{aligned}
\eeq
Now integrating \eqref{q31} in $s$ from $t-\frac 12$ to $t$, we have 
\beq
\begin{aligned}
\norm{\rhot_{h,t}(t) }^2
&\le \frac 5 2 \int_{t-\frac 12}^t\norm{\rhot_{h,t}(s) }^2 ds +C\int_{t-1}^t\big(\norm{\nabla \pi \phi_t }^2+\norm{\ft_t}^2 \big) d\tau.
\end{aligned}
\eeq
This and \eqref{wteq8} yield   
\beq\label{q33}
\norm{\rhot_{h,t}(t) }^2 
\le C \norm{ \rhot_h(t-1)}^2+C \Big(1 +\int_{t-1}^{t} \left(\norm{\phi_t}_{1}^2+ \norm{\ft}^\lambda + \norm{\ft_t}^2 \right)  d\tau \Big).
\eeq
We use \eqref{res1} to estimate the first term on the right hand side of \eqref{q33}. Then  \eqref{q34} follows.
\end{proof}

The proof of the following stability results applied to the problem \eqref{weakform} is similar to the proofs in Theorem~\ref{bound-lq}--\ref{Uni-rhot}. We  omit for brevity. 
\begin{theorem} \label{B4rho}
Let $\tilde \rho$ be a solution of the problem \eqref{weakform}. Then, there exists a positive constant $C$ such that 
\begin{itemize}
\item[i.] For all $t>0,$
 \begin{align}
\label{rhoEst}
&\norm{\rhot (t)}\le  C\left(1+\norm{\rho^0 } + ( Env \, g(t)) ^{\frac 1 \beta}\right);\\
\label{limsup-rho}
&\limsup_{t\to\infty}\norm{\rhot(t)}^2\le  C \left(1+ \limsup_{t\to\infty}g(t)  \right)^{\frac 2 \beta}.\\
& \text{ If }   \limsup_{t\to \infty} \norm{\ft(t)} = \limsup_{t\to \infty}\norm{\phi(t)}_{1}=0  \text{  then }
\label{res1ab} 
\limsup_{t\to\infty} \norm{\rhot (t)}^2=0.
\end{align}
\item[ii.] For all $t>0,$
\begin{align}
\label{res20}
&\norm{\nabla \rho(t)}_{0,\beta}^{\beta}  \le C \Aa(t), \text { where } \Aa(t) \text { is defined in \eqref{Mdef}.}  \\
\label{limsupGrad0}
&\limsup_{t\to \infty} \norm{\nabla \rho(t)}_{0,\beta}^\beta \le C\left(1+\limsup_{t\to\infty}\norm{\phi_t(t) }_{1}^2+(\limsup_{t\to\infty} \, g(t))^{\frac{2}{\beta}}\right).\\
& \text {If } \limsup_{t\to \infty} \left\{ \norm{\ft(t), \norm{\phi(t)}_{1}, \norm{\phi_t(t)}_{1}}\right\} = 0
\text { then }
 \label{Grad:small0}
 \limsup_{t\to \infty}\norm{\nabla \rho(t)}_{0,\beta}  =0.
 \end{align}
\item[iii.] Let $0<t_0<1$, for all $t\ge t_0$,        
 \begin{align}
\label{mid10} 
&\norm{\rhot_{t}(t)}^2\le C\Bb(t),\text { where } \Bb(t) \text{ is defined in \eqref{Updef}. } \\
\label{limsup:Rhot0}
&\limsup_{t\to\infty} \norm{\rhot_{t}(t)}^2 \le C \Big( 1+(\limsup_{t\to\infty}g(t))^{\frac 2 \beta} + \limsup_{t\to\infty}\norm{\phi_t(t)}_{1}^2+ \limsup_{t\to\infty}\norm{\ft_t(t)}^2\Big),   
\end{align}
\item[iv.] For all $t\ge 1,$  
\begin{align}
 &\norm{\nabla \rho(t)}_{0,\beta}^{\beta}\le C\left(1+\norm{\rhot^0}^2+ (Env \, g(t))^\frac{2}{\beta}+\int_{t-1}^{t}\norm{\phi_t(\tau)}_{1}^2d\tau\right).\\
 \label{q340} &\norm{\rhot_{t}(t) }^2
\le C\left\{1+\norm{\rhot^0}^2+(Env\,  g(t))^\frac{2}{\beta} +\int_{t-1}^{t} \left(\norm{\phi_t(s)}_{1}^2+  \norm{\ft_t(s)}^2 \right)  ds \right\}.
\end{align}
\end{itemize}
\end{theorem}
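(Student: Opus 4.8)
The plan is to transcribe the five semidiscrete proofs of Theorems~\ref{bound-lq}--\ref{Uni-rhot} essentially verbatim, replacing the finite-dimensional test space $W_h$ by $W\equiv H_0^1(\Omega)$ and the projected extension $\pi\phi$ by the extension $\phi$ itself. The decisive structural fact is that the weak solution $\rhot(t)$ and, under the standing regularity assumption $\rhot\in C^2(\bar\Omega\times\R_+)$, its time derivative $\rhot_t(t)$ both lie in $H_0^1(\Omega)$ and are therefore admissible test functions in \eqref{weakform}. Every analytic tool invoked in the semidiscrete arguments---the pointwise and integral properties of $K$ and $H$ in \eqref{i:ineq1}--\eqref{i:ineq4}, the monotonicity \eqref{Qineq}, the Poincar\'e--Sobolev embeddings \eqref{embL2} and \eqref{PSIneq}, and the Gronwall-type Lemma~\ref{ODE2}---is stated for functions in $H_0^1$ and hence applies without change.

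For part~(i), I would test \eqref{weakform} with $w=\rhot$ to reproduce the energy identity \eqref{dpalpha}, now carrying $\nabla\phi$ in place of $\nabla\pi\phi$. Bounding the two right-hand terms exactly as in \eqref{term1}--\eqref{term2} (Cauchy's inequality and the upper bound on $K$, H\"older's inequality, the embedding \eqref{embL2}, and Young's inequality with exponents $\lambda,\frac{\lambda}{\lambda-1}$) yields the analogue of \eqref{diff-neq}. Combining \eqref{i:ineq2} with the Poincar\'e--Sobolev inequality \eqref{PSIneq} converts this into a differential inequality of the form $\ddt\norm{\rhot}^2 + c\,\norm{\rhot}^\beta \le C\bigl(1+\norm{\ft}^\lambda+\norm{\phi}_1^2\bigr)$, after which Lemma~\ref{ODE2} delivers \eqref{rhoEst}, \eqref{limsup-rho} and \eqref{res1ab}. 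The only cosmetic difference from Theorem~\ref{bound-lq} is that the stability of the $L^2$-projection \eqref{L2proj0} is no longer needed: since $\norm{\nabla\phi}\le\norm{\phi}_1$ directly, the passage to the boundary-data function $g(t)$ is immediate.

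For parts~(ii)--(iv), I would test \eqref{weakform} with $w=\rhot_t$ to recover the analogue of \eqref{rhot} and add it to the $w=\rhot$ identity, following the chain \eqref{sdq}--\eqref{Grad-tder-eq} to obtain the dissipative inequality for $\int_\Omega H\,dx$; applying Lemma~\ref{ODE2} together with the bound from part~(i) then reproduces \eqref{res20}, \eqref{limsupGrad0} and \eqref{Grad:small0}. The pressure-derivative bounds \eqref{mid10}, \eqref{limsup:Rhot0} and \eqref{q340} follow by differentiating \eqref{weakform} in $t$, testing with $\rhot_t$, and estimating the three terms involving $K'$ exactly as in \eqref{I-1}--\eqref{I-4} via \eqref{i:ineq3}; this is the step for which the regularity hypothesis is essential, since it guarantees that $\rhot_{tt}$ exists and that $\rhot_t$ may legitimately be differentiated and inserted as a test function. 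The uniform-in-time refinements are then recovered by the same integrate-over-$[t-1,t]$ and integrate-over-$[t-\tfrac12,t]$ device used in Theorem~\ref{UniGrad}.

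I do not anticipate a genuine obstacle: the argument is a faithful repetition of the discrete proofs, and the passage from $W_h$ to $H_0^1$ removes rather than adds complications, because the projection-error terms simply disappear. The one point requiring care is the admissibility of $\rhot_t$ as a test function and, for the pressure estimates, the legitimacy of time-differentiating the weak form; this is precisely what the assumed regularity $\rhot\in C^2(\bar\Omega\times\R_+)$ secures. Once this is granted, every inequality transcribes line by line under the substitution $\pi\phi\mapsto\phi$, which is why omitting the details for brevity is justified.
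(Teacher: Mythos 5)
Your proposal is exactly the route the paper takes: the paper itself omits the proof of Theorem~\ref{B4rho}, stating only that it is "similar to the proofs in Theorems~\ref{bound-lq}--\ref{Uni-rhot}," and your transcription of those arguments with $W_h$ replaced by $H_0^1(\Omega)$ and $\pi\phi$ by $\phi$ (correctly noting that the test functions $\rhot$ and $\rhot_t$ are admissible under the standing regularity assumption) is precisely that omitted argument. No discrepancy with the paper's approach.
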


\section{Error estimates}\label{errSec} In this section, we will establish the error estimates between analytical solution and approximation solution in several norms. Let 
\beq\label{Ndef}
\Ff(t)=
\begin{cases}
\displaystyle  1+\norm{ \rho^0}_1^2+ \norm{\phi^0}^2 +\int_0^t e^{-\frac 1 2 (t-s)} \left(  \norm{\phi_t(s)  }_{1}^2 +  (Env \, g(s))^{\frac 2 {\beta}}\right),  & \text { if } 0\le t\le 1,\\
\displaystyle1+\norm{\rhot^0}^2+ (Env\, g(t))^\frac{2}{\beta}+\int_{t-1}^{t}\norm{\phi_t(\tau)}_{1}^2d\tau, &\text{ if } t\ge 1, 
\end{cases}
\eeq
and
\beq\label{L1L2}
\mathcal K = 1+\limsup_{t\to\infty}\norm{\phi_t(t) }_{1}^2
+(\limsup_{t\to\infty} g(t))^{\frac 2 \beta}, \quad  \mathcal L = \mathcal K + \limsup_{t\to\infty}\norm{\ft_t(t)}^2.
\eeq 
 Define 
  \beq\label{lambda}
  \Lambda(t) = 1+ \norm{\nabla \rho(t)}^\beta_{0,\beta}+\norm{\nabla \rho_h(t)}^\beta_{0,\beta}. 
  \eeq
  
 According to Theorem \ref{Est4Grad}, ~\ref{UniGrad} and Theorem~\ref{B4rho}
  \beqs
  \Lambda(t) \le C\Ff(t), \quad \text{and}\quad \limsup_{t\to\infty} \Lambda (t)\le \mathcal K.
  \eeqs
  Let 
  \beq\label{Pdef}
\Gg(t)=
\begin{cases}
\displaystyle  t_0^{-1} \Big\{1+\norm{\rho^0}_1^2+ \norm{\phi^0}^2+\int_{0}^{t_0} 
\big(   \norm{\phi_t(s)}_{1}^2+(Env \, g(s))^{\frac 2\beta}\big) ds\Big\}\\
\displaystyle  +\int_{0}^t e^{-\frac 14(t-s)} \Big(1+\norm{\rhot^0}^2+ \norm{\phi_t(s)}_{1}^2+ \norm{\ft_t(s)}^2+(Env\, g(s))^{\frac 2\beta}\Big) ds, & \text { if } 0<t_0\le t\le 1,\\
\displaystyle  1+\norm{\rhot^0}^2+(Env\, g(t))^\frac{2}{\beta} +\int_{t-1}^{t} \left(\norm{\phi_t(s)}_{1}^2+  \norm{\ft_t(s)}^2 \right), &\text{ if } t\ge 1. 
\end{cases}
\eeq
We have from Theorem~\ref{phderv},~\ref{Uni-rhot} and Theorem~\ref{B4rho} that  
\beq\label{dert-rho}
\norm{\rhot_t(t)}+\norm{\rhot_{h,t}(t)}\le C\sqrt{\Gg(t)} \quad \text { and } \quad \limsup_{t\to\infty}\left(\norm{\rhot_t(t)}+\norm{\rhot_{h,t}(t)} \right)\le C\mathcal L. 
\eeq
\subsection{Error estimate for continuous Galerkin method} We will find the error bounds in the semidiscrete method by comparing the computed solution to the projections of the exact solutions. To do this, we restrict the test functions in \eqref{weakform} to the finite dimensional spaces. Let
\beq
\chi=\rhot -\rhot_h = (\rhot -\pi\rhot) - (\rhot_h- \pi\rhot) \eqdef \vartheta -\theta_h,\quad \text { and } \quad  \varphi\eqdef \phi-\pi\phi. 
\eeq


\begin{theorem}\label{longerr1} Let $1\le k\le r+1$,  $\tilde \rho,\tilde \rho_h$ be solutions to \eqref{weakform} and \eqref{semidiscreteform} respectively. Assume that $\rhot\in L^\infty(\R_+,H^k(\Omega))$, $\rhot_t\in L^2(\R_+,H^k(\Omega) )$.  Then, there exists a constant positive constant $C$ independent of $h$ such that for all $t> 0,$    
 \beq\label{err-rho}
 \norm{(\rhot -\rhot_h)(t)}^2 \le C h^{2k}\norm{\rhot(t)}_{k}^2+ C h^{k-1}\int_0^t  e^{-2^{-a}\int_s^t \Lambda(\tau)^{-1}d\tau }\Ff (s)\Hh(s) ds,
  \eeq
   where $\Ff(t)$ is defined as \eqref{Ndef}, and 
   \beq\label{Rdef}
   \Hh(t) = \norm{\rhot_t(t)}_{k}^2 
  +\norm{\rhot(t)}_{k,\beta}+ \norm{\rhot(t)}_{k}^2 +\norm{\phi(t)}_{k,\beta}^2+ \norm{\phi(t)}_{k,\beta}.
   \eeq
Furthermore, if $\displaystyle \int_0^\infty \Lambda^{-1}(t)dt =\infty$ then
\beq\label{limsupErr}
 \begin{split}
\limsup_{t\to\infty}\norm{(\rhot-\rhot_h)(t)}^2&\le C h^{k-1}\mathcal K^{2}  \limsup_{t\to\infty} \Hh(t).
  \end{split}
 \eeq
\end{theorem}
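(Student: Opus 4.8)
The plan is to compare $\rhot_h$ with the $L^2$ projection $\pi\rhot$ of the exact solution, so that only the discrete part $\theta_h=\rhot_h-\pi\rhot$ needs a differential inequality, while the projection part $\vartheta=\rhot-\pi\rhot$ is controlled directly by \eqref{prjpi}. Subtracting \eqref{semidiscreteform} from \eqref{weakform} with a common test function $w_h\in W_h$ yields the error relation $(\chi_t,w_h)+(K(|\nabla\rho|)\nabla\rho-K(|\nabla\rho_h|)\nabla\rho_h,\nabla w_h)=0$. Since $\rhot_h^0=\pi\rhot^0$ we have $\theta_h(0)=0$, so the entire $h^{k-1}$-part of \eqref{err-rho} must come from estimating $\norm{\theta_h(t)}$; the term $Ch^{2k}\norm{\rhot(t)}_k^2$ is then simply $\norm{\vartheta(t)}^2$ from \eqref{prjpi}, brought in through $\norm{\chi}\le\norm{\vartheta}+\norm{\theta_h}$.

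I would set $w_h=\theta_h$ and write $\nabla\theta_h=(\nabla\vartheta+\nabla\varphi)-(\nabla\rho-\nabla\rho_h)$. The time term gives $\tfrac12\ddt\norm{\theta_h}^2$, the cross contribution $(\vartheta_t,\theta_h)$ vanishing by $L^2$ orthogonality (or, kept and absorbed, accounting for the $\norm{\rhot_t}_k^2$ entry of $\Hh$). Writing $E=\nabla\rho-\nabla\rho_h$ and $P=\nabla\vartheta+\nabla\varphi$, the principal part splits into the monotone piece $(K(|\nabla\rho|)\nabla\rho-K(|\nabla\rho_h|)\nabla\rho_h,E)$, bounded below via \eqref{Mono} by $c_4\omega\norm{E}_{0,\beta}^2$, and the cross piece paired with $P$, which I estimate with the Lipschitz bound \eqref{Lipchitz}, H\"older's inequality in the conjugate exponents $\beta,\lambda$, and Young's inequality to absorb half of $c_4\omega\norm{E}_{0,\beta}^2$. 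The crucial structural fact is $\omega=(1+\max\{\norm{\nabla\rho}_{0,\beta},\norm{\nabla\rho_h}_{0,\beta}\})^{-a}\ge 2^{-a}\Lambda^{-1}$, where the exponent $-1$ (rather than $-\gamma$) is the legitimate but lossy bound permitted by $\gamma<1$ and $\Lambda\ge 1$.

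Next I would convert the remaining gradient coercivity into control of $\norm{\theta_h}$: from $\nabla\theta_h=P-E$ and the Poincar\'e--Sobolev inequality \eqref{PSIneq} under the Degree Condition, $\norm{\theta_h}\le C_p\norm{\nabla\theta_h}_{0,\beta}\le C_p(\norm{E}_{0,\beta}+\norm{P}_{0,\beta})$, so $\norm{E}_{0,\beta}^2\ge c\norm{\theta_h}^2-\norm{P}_{0,\beta}^2$. Collecting terms produces
\beqs
\ddt\norm{\theta_h}^2+2^{-a}\Lambda(t)^{-1}\norm{\theta_h}^2\le C\Big(\tfrac1\omega\,\norm{P}_{0,\lambda}^2+\norm{P}_{0,\beta}^2\Big).
\eeqs
Here $\tfrac1\omega\le C\Lambda$ (again from $\gamma<1$, $\Lambda\ge1$) is precisely what makes a factor $\Lambda$, hence $\Ff$, multiply the projection errors; bounding $\norm{P}_{0,q}^2\le Ch^{2(k-1)}(\norm{\rhot}_{k,q}^2+\norm{\phi}_{k,q}^2)\le Ch^{k-1}\Hh$ through the gradient form of \eqref{prjpi} (using $h^{2(k-1)}\le h^{k-1}$ for $h\le1$) and $\Lambda\le C\Ff$ collapses the right side to $Ch^{k-1}\Ff\Hh$.

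Finally, since $\theta_h(0)=0$, the integral form of Gronwall's inequality (Lemma~\ref{ODE2} with $h(t)=2^{-a}\Lambda^{-1}$, $\theta=1$) gives $\norm{\theta_h(t)}^2\le Ch^{k-1}\int_0^t e^{-2^{-a}\int_s^t\Lambda^{-1}d\tau}\Ff(s)\Hh(s)\,ds$, whence \eqref{err-rho}. For the asymptotic statement I would instead invoke \eqref{ulode} with the same $h(t)$: when $\int_0^\infty\Lambda^{-1}=\infty$ it yields $\limsup_{t\to\infty}\norm{\theta_h}^2\le 2^a\limsup_{t\to\infty}(\Lambda(t)F(t))$, where $F$ is the forcing above. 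Keeping the genuine $\Lambda$ (not its increasing envelope $\Ff$) and using $F\le C\Lambda h^{k-1}\Hh$ together with the uniform bound $\limsup\Lambda\le\mathcal K$ turns the two factors of $\Lambda$ into $\mathcal K^2$, giving \eqref{limsupErr}. The main obstacle throughout is the degeneracy: the coercivity rate is only $\Lambda^{-1}$, so the exponential forgetting in the Gronwall kernel is governed by $\int\Lambda^{-1}$, and one must budget the powers of $\Lambda$ carefully --- spending one factor in the forcing and, for the limsup, one more from the Gronwall quotient --- so that the bounds read $\Ff\Hh$ and $\mathcal K^2\limsup\Hh$ respectively.
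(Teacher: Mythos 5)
Your proposal mirrors the paper's proof almost step for step: the same splitting $\chi=\vartheta-\theta_h$ through the $L^2$ projection, testing the error equation with $\theta_h$, the monotonicity bound \eqref{Mono}, the conversion of $\norm{\nabla(\rho-\rho_h)}_{0,\beta}^2$ into $\norm{\theta_h}^2$ via Poincar\'e--Sobolev, the bound $\omega^{-1}\le 2^a\Lambda\le C\Ff$, an integrating-factor Gronwall for \eqref{err-rho}, and \eqref{ulode} for \eqref{limsupErr}. (Your remark that $(\vartheta_t,\theta_h)=0$ by $L^2$ orthogonality is correct and would even let you drop $\norm{\rhot_t(t)}_{k}^2$ from $\Hh$; the paper keeps that term and estimates it with Young's inequality.)

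The one place you genuinely depart from the paper is the cross term $(K(|\nabla\rho|)\nabla\rho-K(|\nabla\rho_h|)\nabla\rho_h,\nabla\vartheta+\nabla\varphi)$, and your treatment there has a gap. Using the Lipschitz bound \eqref{Lipchitz} together with H\"older in the pair $(\beta,\lambda)$ and Young's absorption forces the projection error into $L^\lambda$: your forcing contains $\omega^{-1}\norm{P}_{0,\lambda}^2$, and since $\lambda=\beta/(\beta-1)>2$, the estimate $\norm{P}_{0,\lambda}^2\le Ch^{2(k-1)}\bigl(\norm{\rhot}_{k,\lambda}^2+\norm{\phi}_{k,\lambda}^2\bigr)$ involves $W^{k,\lambda}$-norms that are neither contained in $\Hh$ (which carries only $\norm{\rhot}_{k,\beta}$, $\norm{\rhot}_{k}$, $\norm{\phi}_{k,\beta}$) nor controlled by the hypothesis $\rhot\in L^\infty(\R_+,H^k(\Omega))$, because $H^k\not\hookrightarrow W^{k,\lambda}$ for $\lambda>2$ on a bounded domain. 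So the step ``$\norm{P}_{0,q}^2\le Ch^{k-1}\Hh$'' fails for $q=\lambda$. The paper sidesteps this by not invoking the Lipschitz bound here: it bounds the integrand by $C\bigl(|\nabla\rho|^{\beta-1}+|\nabla\rho_h|^{\beta-1}\bigr)\,|\nabla\vartheta+\nabla\varphi|$ (the growth bound \eqref{i:ineq2} with $n=1$), so that H\"older places the $L^\lambda$-norm on $|\nabla\rho|^{\beta-1}$, giving $\norm{\nabla\rho}_{0,\beta}^{\beta-1}\le\Lambda(t)$, and leaves the projection error in $L^\beta$; the term is then kept \emph{linear}, $C\Lambda(t)\bigl(\norm{\nabla\vartheta}_{0,\beta}+\norm{\nabla\varphi}_{0,\beta}\bigr)\le C\Lambda(t)h^{k-1}\bigl(\norm{\rhot}_{k,\beta}+\norm{\phi}_{k,\beta}\bigr)$, which is exactly the source of the first-power entries of $\Hh$ and of the $h^{k-1}$ (rather than $h^{2(k-1)}$) rate. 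If you replace your Young-absorption of the cross term by this linear estimate, the rest of your argument goes through essentially as written.
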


\begin{proof}
From \eqref{weakform} and \eqref{semidiscreteform}, we find the error equation 
\beq\label{errEq0}
 (\rhot_{t}- \rhot_{h,t}, w_h) + (K(|\nabla \rho|)\nabla \rho -K(|\nabla \rho_h|)\nabla \rho_h ,\nabla w_h) =0,  \quad \forall w_h\in W_h.
\eeq
Taking $w_h=\theta_h$, we obtain 
\beq\label{errEq1}
 (\vartheta_t-\theta_{h,t} , \theta_h) + (K(|\nabla \rho|)\nabla \rho -K(|\nabla \rho_h|)\nabla \rho_h ,\nabla \theta_h) =0 .
\eeq
We rewrite the equation \eqref{errEq1} as  form  
\beqs
\begin{aligned}
 \frac 1 2\ddt \norm{\theta_h}^2 &+(K(|\nabla \rho|)\nabla \rho -K(|\nabla \rho_h|)\nabla \rho_h ,\nabla(\rho -\rho_h) ) \\
 &=  (\vartheta_t,\theta_h) +  (K(|\nabla \rho|)\nabla \rho -K(|\nabla \rho_h|)\nabla \rho_h ,\nabla \vartheta + \nabla \varphi ). 
 \end{aligned} 
 \eeqs
 Thanks to \eqref{Mono}, 
 \beq \label{monotone}
 \left(K(|\nabla \rho|)\nabla \rho -K(|\nabla \rho_h|)\nabla \rho_h ,\nabla(\rho -\rho_h)\right)  \ge c_4\omega \norm{\nabla(\rho-\rho_h)}_{0,\beta}^2. 
    \eeq
 Using \eqref{Lips}, H\"older's and Young's inequality, we have 
\beq\label{r20}
\begin{aligned}
(K(|\nabla \rho|)\nabla \rho -K(|\nabla \rho_h|)\nabla \rho_h ,\nabla \vartheta + \nabla \varphi )
&\le C(|\nabla \rho|^{\beta-1}+|\nabla \rho_h|^{\beta-1} ,|\nabla \vartheta + \nabla \varphi |)\\
& \le C\left(\norm{\nabla \rho}_{0,\beta}^{\beta-1}+\norm{\nabla \rho_h}_{0,\beta}^{\beta-1}\right)\norm{\nabla \vartheta+ \nabla \varphi }_{0,\beta} \\
&\le C \Lambda(t) \left( \norm{\nabla \vartheta}_{0,\beta}+\norm{ \nabla \varphi }_{0,\beta}\right).
\end{aligned}
\eeq
 
Using Young's inequality, for $\varep>0$  
\beq\label{Yb}
(\vartheta_t, \theta_h) \le C\omega^{-1}\varep^{-1} \norm{\vartheta_t}^2 +\varep\omega \norm{\theta_h}^2.
\eeq
Combining \eqref{monotone},\eqref{r20} and \eqref{Yb} gives 
\beqs
\begin{split}
 \frac 1 2\ddt \norm{\theta_h}^2 +c_4 \omega\norm{\nabla(\rho-\rho_h)}_{0,\beta}^2  
\le  C\omega^{-1}\varep^{-1} \norm{\vartheta_t}^2 +\varep\omega \norm{\theta_h}^2 +   \Lambda(t)\left(\norm{\nabla \vartheta}_{0,\beta}+ \norm{\nabla \varphi}_{0,\beta}\right). 
 \end{split}
 \eeqs
 By Poincar\'e-Sobolev inequality $\norm{u}\le C_p\norm{\nabla u}_{0,\beta}$ for all $u\in H_0^1(\Omega),$
 \beq\label{InvIneq0}
 \begin{aligned}
 \norm{\nabla(\rho-\rho_h)}_{0,\beta}^2 &\ge \frac 1 2 \norm{\nabla(\rhot-\rhot_h)}_{0,\beta}^2 - \norm{\nabla\varphi}_{0,\beta}^2\\
 &\ge \frac{1}{2C_p^2} \norm{\rhot-\rhot_h}^2 - \norm{\nabla\varphi}_{0,\beta}^2\\
 &\ge \frac{1}{4C_p^2}\norm{\theta_h}^2 - \frac{1}{2C_p^2}\norm{\vartheta}^2 - \norm{\nabla\varphi}_{0,\beta}^2. 
 \end{aligned}
 \eeq
 Here we have used the inequality $(a-b)^2 \ge \frac 12 a^2 - b^2$.  Thus,
 \beqs
\begin{split}
 \frac 1 2\ddt \norm{\theta_h}^2 +\frac{c_4}{4C_p^2} \omega\norm{\theta_h}^2  
&\le  C\omega^{-1}\varep^{-1} \norm{\vartheta_t}^2 +\varep\omega \norm{\theta_h}^2 +   C \omega\left(\norm{\vartheta}^2+  \norm{\nabla \varphi}_{0,\beta}^2\right)\\
&\quad +  C \Lambda(t) \left( \norm{\nabla \vartheta}_{0,\beta}+\norm{ \nabla \varphi }_{0,\beta}\right). 
 \end{split}
 \eeqs
 Taking $\varep =\frac{c_4}{8C_p^2}$, we find that 
   \beq\label{Dineq0}
 \ddt \norm{\theta_h}^2 + \omega\norm{\theta_h}^2  
\le  C\omega^{-1}\norm{\vartheta_t}^2 + C \omega\Big( \norm{\vartheta}^2+ \norm{\nabla\varphi}_{0,\beta}^2\Big) +C \Lambda(t)  \left( \norm{\nabla \vartheta}_{0,\beta}+\norm{ \nabla \varphi }_{0,\beta}\right).
 \eeq
 Observing from \eqref{res2} that  $\omega(t)\le 1$. Following from \eqref{res2} and \eqref{res20}, 
 \beq\label{Odef0}
 \begin{split}
 \omega^{-1}(t)\le \left(1+\norm{\nabla \rho}_{0,\beta}+ \norm{\nabla \rho_h}_{0,\beta}\right)^{\beta\gamma} \le (2^\beta \Lambda(t))^\gamma\le 2^a\Lambda(t)\le  C\Ff(t)  . 
 \end{split}
 \eeq
 Thus,
\beq\label{DIneq}
 \ddt \norm{\theta_h}^2 + 2^{-a}\Lambda(t)^{-1}\norm{\theta_h}^2  
\le  C\Ff(t)\left(\norm{\vartheta_t}^2 +  \norm{\vartheta}^2+ \norm{\nabla\varphi}_{0,\beta}^2 + \norm{\nabla \vartheta}_{0,\beta}+\norm{ \nabla \varphi }_{0,\beta}\right).
 \eeq
Applying Gronwall's inequality and using the fact that $\theta_h(0) =0$, we obtain
\beq\label{Btheta}
 \norm{\theta_h}^2 \le \int_0^t e^{-2^{-a}\int_s^t \Lambda(\tau)^{-1}d\tau } \Ff(s) \left(\norm{\vartheta_t}^2 +  \norm{\vartheta}^2+ \norm{\nabla\varphi}_{0,\beta}^2 +\norm{ \nabla \varphi }_{0,\beta}+ \norm{\nabla \vartheta}_{0,\beta}\right)ds.
 \eeq
Consequently, 
\beq\label{TIneq}
 \norm{\theta_h}^2 
  \le C h^{k-1}\int_0^t \Ff(s) e^{-2^{-a}\int_s^t \Lambda(\tau)^{-1}d\tau }\left[  \norm{\rhot_t}_{k}^2+\norm{\rhot}_{k}^2+\norm{\phi}_{k,\beta}^2+\norm{\phi}_{k,\beta}
  +\norm{\rhot}_{k,\beta}\right] ds.
  \eeq
 The inequality \eqref{err-rho} follows by the triangle inequality and \eqref{TIneq}. 
%
   
  Applying Lemma~\ref{ODE2} for \eqref{Dineq0}, we obtain  
 \beqs
 \begin{split}
 \limsup_{t\to\infty}\norm{\theta_h}^2 \le  C\limsup_{t\to\infty}\Big[ \omega^{-2} \norm{\vartheta_t}^2 + \omega^{-1}\Lambda(t)\left(\norm{\nabla \vartheta}_{0,\beta}+\norm{\nabla\varphi}_{0,\beta} \right)+ \norm{\vartheta}^2+ \norm{\nabla\varphi}_{0,\beta}^2\Big].
 \end{split}
 \eeqs
 Hence,
 \beqs
 \begin{split}
 \limsup_{t\to\infty}\norm{\theta_h}^2 &\le  Ch^{k-1}\limsup_{t\to\infty}\Big[ 
\Lambda(t)^{2} \left(\norm{\rhot_t}_{k}^2 + \norm{\rhot}_{k,\beta}+\norm{\phi}_{k,\beta} \right)+ \norm{\rhot}_{k}^2+ \norm{\phi}_{k,\beta}^2\Big]\\
  &\le  Ch^{k-1}\left(\limsup_{t\to\infty}\Lambda(t) \right)^{2} 
 \limsup_{t\to\infty}\Hh(t)\\
 & \le  Ch^{k-1}\mathcal K^{2}\limsup_{t\to\infty}\Hh(t),
  \end{split}
 \eeqs
which shows \eqref{limsupErr}. The proof is complete.  
 \end{proof}


  The $L^2$-error estimate and the inverse estimate enable us to have the $L^\infty$-error estimate as the following 
\begin{corollary}  Let $1\le k\le r+1,$ $\tilde\rho,\tilde\rho_h$ be solutions to \eqref{weakform} and \eqref{semidiscreteform} respectively.  Assume that $\rhot\in L^\infty(\R_+,W^{k,\infty}(\Omega) )$,  $\rhot_t\in L^2(\R_+,H^k(\Omega) )$.  Then, there exists a constant positive constant $C$ independent of $h$ such that    
\beq\label{ErrRhoLinf}
 \norm{(\rhot -\rhot_h)(t)}_{0,\infty}^2 \le C h^{2k}\norm{\rhot(t)}_{k,\infty}^2+ C h^{k-1-d} \int_0^t  e^{-2^{-a}\int_s^t \Lambda(\tau)^{-1}d\tau }\Ff (s)\Hh(s) ds,
  \eeq
   where $\Ff(t)$, $\Hh(t)$ are defined in \eqref{Ndef} and \eqref{Rdef} respectively.
   \end{corollary}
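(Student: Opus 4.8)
The plan is to exploit the splitting $\rhot - \rhot_h = \vartheta - \theta_h$ already introduced before Theorem~\ref{longerr1}, where $\vartheta = \rhot - \pi\rhot$ and $\theta_h = \rhot_h - \pi\rhot \in W_h$, and to estimate the two pieces separately in the $L^\infty$ norm by the triangle inequality:
\beqs
\norm{(\rhot - \rhot_h)(t)}_{0,\infty} \le \norm{\vartheta(t)}_{0,\infty} + \norm{\theta_h(t)}_{0,\infty}.
\eeqs
The projection error $\vartheta$ is handled purely by approximation theory: applying the $L^\infty$ approximation property \eqref{prjpi} with $q=\infty$ and $m=k$ gives $\norm{\vartheta(t)}_{0,\infty} \le C h^k \norm{\rhot(t)}_{k,\infty}$, which under the regularity hypothesis $\rhot \in L^\infty(\R_+, W^{k,\infty}(\Omega))$ is finite and, after squaring, accounts for the first term $C h^{2k}\norm{\rhot(t)}_{k,\infty}^2$ in \eqref{ErrRhoLinf}.

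The second, finite-element, piece $\theta_h$ is where the characteristic machinery enters. The key tool is the global inverse inequality on the quasi-uniform family $\{\mathcal T_h\}_h$ introduced in \S\ref{GalerkinMethod}: for any $v_h \in W_h$ on a domain in $\R^d$ one has $\norm{v_h}_{0,\infty} \le C h^{-d/2}\norm{v_h}$. Applying this to $\theta_h$, which lies in $W_h$ since both $\rhot_h$ and $\pi\rhot$ do, converts the already-established $L^2$ bound into an $L^\infty$ bound. Precisely, I would invoke inequality \eqref{TIneq} from the proof of Theorem~\ref{longerr1},
\beqs
\norm{\theta_h(t)}^2 \le C h^{k-1}\int_0^t e^{-2^{-a}\int_s^t \Lambda(\tau)^{-1}d\tau}\Ff(s)\Hh(s)\,ds,
\eeqs
and multiply by the inverse-estimate factor $h^{-d}$, obtaining $\norm{\theta_h(t)}_{0,\infty}^2 \le C h^{k-1-d}\int_0^t e^{-2^{-a}\int_s^t \Lambda(\tau)^{-1}d\tau}\Ff(s)\Hh(s)\,ds$, which is exactly the second term in \eqref{ErrRhoLinf}. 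Squaring the triangle inequality, using $(a+b)^2 \le 2a^2 + 2b^2$, and combining the two bounds then yields \eqref{ErrRhoLinf}.

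Since this is a corollary assembled entirely from a known $L^2$ estimate and standard finite-element tools, there is no genuinely difficult step; the one point that must be stated with care is that the $h^{-d/2}$ inverse estimate is legitimate only because the triangulation family is globally quasi-uniform and $\theta_h \in W_h$. The loss of a factor $h^{d}$ relative to the $L^2$ estimate, and hence the possibly non-convergent exponent $h^{k-1-d}$ when $k$ is small and $d$ large, is the intrinsic price of passing from $L^2$ to $L^\infty$ through an inverse inequality rather than through a genuine $L^\infty$-energy argument.
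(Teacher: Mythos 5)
Your proposal is correct and follows exactly the paper's own argument: the triangle inequality applied to the splitting $\chi = \vartheta - \theta_h$, the $L^\infty$ projection estimate \eqref{prjpi} for $\vartheta$, and the inverse estimate $\norm{\theta_h}_{0,\infty} \le C h^{-d/2}\norm{\theta_h}$ on the quasi-uniform mesh combined with the $L^2$ bound \eqref{TIneq}. Your closing remark about the $h^{-d}$ loss being the intrinsic price of this route is a fair observation, but the proof itself is identical to the one in the paper.
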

\begin{proof} 
Recall that in the quasi-uniform of $\mathcal T_h$ we have the inverse estimate (see in \cite{BS08,TV06})
\beqs
\norm{\theta_h}_{0,\infty}\le Ch^{-\frac d  2}\norm{\theta_h}.
\eeqs
This and triangle inequality imply that 
\beqs 
   \norm{\chi}_{0,\infty}^2 
  \le   2\norm{\vartheta}_{0,\infty}^2+2\norm{\theta_h}_{0,\infty}^2 \le  Ch^{2k}\norm{\rhot}_{k,\infty}^2+Ch^{-d}\norm{\theta_h}^2.
 \eeqs 
 The inequality \eqref{ErrRhoLinf} follows directly from\eqref{TIneq}.   
\end{proof}

Now we give an error estimate for the gradient vector.
\begin{theorem}\label{longerr2} Let $1\le k\le r+1$. Assume that $\rhot\in L^\infty(\R_+,H^k(\Omega))$, $\rhot_t\in L^2(\R_+,H^k(\Omega) )$. Let $\tilde\rho,\tilde\rho_h$ be solutions to \eqref{weakform} and \eqref{semidiscreteform} respectively.  There exists a positive constants $C$ independent of $h$ such that for any $t\ge t_0>0,$    
\beq\label{ErrGrad}
\begin{aligned}
 \norm {\nabla (\rho -\rho_h)(t)}_{0,\beta}^2  \le Ch^{\frac{k-1}2} \Ff^2(t) \left(\Big(\Gg(t)\int_0^t e^{-2^{-a}\int_s^t \Lambda(\tau)^{-1}d\tau } \Ff(s)\Hh(s) ds\Big)^{\frac 12} + \Hh(t)\right), 
\end{aligned}
\eeq
where 
$\Ff(t), \Gg(t), \Hh(t)$ are defined in \eqref{Mdef}, \eqref{Pdef}, \eqref{Rdef} respectively.\\
 Furthermore, if $\displaystyle \int_0^\infty \Lambda^{-1}(t)dt =\infty$ then
\beq\label{limsupErrGrad}
\limsup_{t\to\infty} \norm {\nabla (\rho -\rho_h)(t)}_{0,\beta}^2\le  C h^{\frac{k-1}2}\mathcal K^3\mathcal L  \left( \limsup_{t\to\infty} \Hh(t)+ 
\Big( \limsup_{t\to\infty} \Hh(t) \Big)^{\frac 12}
\right).
\eeq 
\end{theorem}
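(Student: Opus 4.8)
The plan is to obtain the gradient error bound \emph{pointwise in time} from the strong monotonicity \eqref{Mono}, rather than from a Gronwall argument, and to feed into it the $L^2$ control of $\theta_h$ already established in Theorem~\ref{longerr1}. Taking $s_1=\nabla\rho$, $s_2=\nabla\rho_h$ in \eqref{Mono} gives
$$c_4\,\omega\,\norm{\nabla(\rho-\rho_h)}_{0,\beta}^2 \le \big(K(|\nabla\rho|)\nabla\rho-K(|\nabla\rho_h|)\nabla\rho_h,\ \nabla(\rho-\rho_h)\big),$$
with $\omega=\omega(t)$ as in \eqref{Mono}. I would then split the test direction as $\nabla(\rho-\rho_h)=\nabla\vartheta+\nabla\varphi-\nabla\theta_h$, the point being that $\theta_h\in W_h$ is an admissible test function, while $\nabla\theta_h$ is the only piece that is not directly an approximation error.

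First I would handle the $\nabla\theta_h$ contribution through the error equation \eqref{errEq0} with $w_h=\theta_h$, which converts the monotone form into a time-derivative term,
$$\big(K(|\nabla\rho|)\nabla\rho-K(|\nabla\rho_h|)\nabla\rho_h,\ \nabla\theta_h\big)=-(\chi_t,\theta_h),$$
so that $-(\,\cdot\,,\nabla\theta_h)=(\chi_t,\theta_h)$ is estimated by $\norm{\chi_t}\,\norm{\theta_h}$. Here $\norm{\chi_t}=\norm{\rhot_t-\rhot_{h,t}}\le C\sqrt{\Gg(t)}$ by \eqref{dert-rho}, and $\norm{\theta_h}$ is exactly the quantity bounded in \eqref{TIneq}. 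The remaining contribution $(\,\cdot\,,\nabla\vartheta+\nabla\varphi)$ is controlled by the Lipschitz-type estimate \eqref{r20}, giving $C\Lambda(t)\big(\norm{\nabla\vartheta}_{0,\beta}+\norm{\nabla\varphi}_{0,\beta}\big)$. Dividing by $c_4\omega$ and using $\omega^{-1}\le C\Ff(t)$ from \eqref{Odef0} together with $\Lambda(t)\le C\Ff(t)$ yields
$$\norm{\nabla(\rho-\rho_h)}_{0,\beta}^2\le C\Ff^2(t)\big(\norm{\nabla\vartheta}_{0,\beta}+\norm{\nabla\varphi}_{0,\beta}\big)+C\Ff(t)\sqrt{\Gg(t)}\,\norm{\theta_h}.$$

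To finish \eqref{ErrGrad} I would insert the projection bounds $\norm{\nabla\vartheta}_{0,\beta}+\norm{\nabla\varphi}_{0,\beta}\le Ch^{k-1}\big(\norm{\rhot}_{k,\beta}+\norm{\phi}_{k,\beta}\big)\le Ch^{(k-1)/2}\Hh(t)$ (using $0<h\le 1$, $k\ge 1$, and $\Hh\ge\norm{\rhot}_{k,\beta}+\norm{\phi}_{k,\beta}$), and substitute the square root of \eqref{TIneq}, namely $\norm{\theta_h}\le Ch^{(k-1)/2}\big(\int_0^t e^{-2^{-a}\int_s^t\Lambda(\tau)^{-1}d\tau}\Ff(s)\Hh(s)\,ds\big)^{1/2}$. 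The two pieces then combine into the two summands of \eqref{ErrGrad}, the common prefactor $\Ff^2(t)$ absorbing all lower powers of $\Ff$ since $\Ff\ge 1$. For the asymptotic bound \eqref{limsupErrGrad} I would instead retain $\omega^{-1}\le C\Lambda(t)$ before dividing and take $\limsup_{t\to\infty}$, using $\limsup_{t\to\infty}\Lambda(t)\le\mathcal K$, $\limsup_{t\to\infty}\norm{\chi_t}\le C\mathcal L$ from \eqref{dert-rho}, the already-proven $\limsup_{t\to\infty}\norm{\theta_h}^2\le Ch^{k-1}\mathcal K^2\limsup_{t\to\infty}\Hh(t)$ from the proof of \eqref{limsupErr}, and $\limsup(fg)\le\limsup f\cdot\limsup g$ for nonnegative factors.

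The main obstacle is the second step. Unlike the $L^2$ analysis, the monotone form must be tested against $\nabla(\rho-\rho_h)$, which is \emph{not} an admissible finite element direction, so no clean differential inequality is available to integrate. The resolution—rerouting the nonconforming piece $\nabla\theta_h$ through the error equation into $(\chi_t,\theta_h)$—forces us to pay $\norm{\theta_h}$ rather than $\norm{\theta_h}^2$, that is, to take a square root of the $L^2$ error integral. This is exactly what degrades the rate from $h^{k-1}$ in Theorem~\ref{longerr1} to $h^{(k-1)/2}$ here and produces the $(\cdots)^{1/2}$ structure of \eqref{ErrGrad}; the only remaining work is bookkeeping the weights $\Ff(t)$, $\Gg(t)$, $\Lambda(t)$ uniformly in $t$.
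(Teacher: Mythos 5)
Your proposal is correct and follows essentially the same route as the paper: the paper's identity \eqref{sq1} is precisely your rerouting of the nonconforming piece $\nabla\theta_h$ through the error equation into $(\rho_t-\rho_{h,t},\theta_h)$, after which both arguments invoke the monotonicity \eqref{Mono}, the Lipschitz bound \eqref{r20}, the weight estimate $\omega^{-1}\le C\Ff(t)$, and the already-established $L^2$ bound \eqref{TIneq} on $\norm{\theta_h}$. Your accounting of why the rate degrades to $h^{(k-1)/2}$ and your treatment of the $\limsup$ bound likewise match the paper's.
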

\begin{proof}
We rewrite equation \eqref{errEq1} as the following    
\begin{multline}\label{sq1}
 \intd{K(|\nabla \rho|)\nabla \rho -K(|\nabla \rho_h|)\nabla \rho_h ,\nabla \rho -\nabla\rho_h }\\
  =(\rho_t -\rho_{h,t}, \theta_h) + \intd{K(|\nabla \rho|)\nabla \rho -K(|\nabla \rho_h|)\nabla \rho_h ,\nabla \vartheta +\nabla \varphi }.
\end{multline}
According to  \eqref{Mono},
\beq\label{posOper}
(K(|\nabla \rho|)\nabla \rho -K(|\nabla \rho_h|)\nabla \rho_h ,\nabla \rho -\nabla\rho_h )\ge c_4\mathcal \omega \norm{\nabla (\rho-\rho_h)}_{0,\beta}^2.
\eeq
Using  H\"older's inequality and \eqref{r20}, we find that
\begin{multline}\label{sq3}
(\rho_t -\rho_{h,t}, \theta_h) + \intd{K(|\nabla \rho|)\nabla \rho -K(|\nabla \rho_h|)\nabla \rho_h ,\nabla \vartheta +\nabla \varphi}\\
 \le  C(|\rho_t|+|\rho_{h,t}|,|\theta_h|) + C(|\nabla \rho|^{\beta-1}+ |\nabla \rho_h|^{\beta-1},|\nabla \vartheta|+|\nabla \varphi|)\\
 \le C\left(\norm{\rho_t}+\norm{\rho_{h,t}} \right)\norm{\theta_h} + C\Lambda(t)(\norm{\nabla \vartheta}_{0,\beta}+\norm{\nabla\varphi}_{0,\beta}).
\end{multline}
Combining \eqref{sq1} -- \eqref{sq3}, and \eqref{Odef0} yields  
\beq\label{errgrad}
\begin{split}
 \norm {\nabla (\rho -\rho_h)}_{0,\beta}^2&\le C\omega^{-1}\left(\norm{\rho_t}+\norm{\rho_{h,t}} \right)\norm{\theta_h} + C\omega^{-1}\Lambda(t)(\norm{\nabla \vartheta}_{0,\beta}+\norm{\nabla\varphi}_{0,\beta})\\
 &\le C\Lambda(t)^2\Big[ \left(\norm{\rho_t}+\norm{\rho_{h,t}} \right)\norm{\theta_h} +\norm{\nabla \vartheta}_{0,\beta} +\norm{\nabla\varphi}_{0,\beta}\Big]\\
&\le C\Ff^2(t)\Big[ \left(\norm{\rho_t}+\norm{\rho_{h,t}} \right)\norm{\theta_h} +\norm{\nabla \vartheta}_{0,\beta}+\norm{\nabla\varphi}_{0,\beta} \Big].
\end{split}
\eeq


Due to \eqref{dert-rho}, \eqref{TIneq} and the fact that $\norm{\nabla \vartheta}_{0,\beta}\le Ch^{k-1}\norm{\rhot}_{k,\beta}$, the left hand side of \eqref{errgrad} is bounded by  
\beq\label{lhs0}
\begin{split}
C\Ff^2(t) \left\{ h^{k-1}\Gg(t)\int_0^t e^{-2^{-a}\int_s^t \Lambda(\tau)^{-1}d\tau } \Ff(s)\Hh(s) ds\right\}^{\frac 12}\\
 + Ch^{k-1}\Ff^2(t)\left(\norm{\rhot}_{k,\beta}+\norm{\phi}_{k,\beta}\right).
\end{split}
\eeq
The inequality \eqref{ErrGrad} follows from \eqref{errgrad} and \eqref{lhs0}.

Take limit superior both sides of \eqref{errgrad}, we find that 
\beqs
\begin{split}
& \limsup_{t\to\infty}\norm {\nabla (\rho -\rho_h)}_{0,\beta}^2\\
&\quad \le C\limsup_{t\to\infty}\Lambda(t)^2\Big[ \limsup_{t\to\infty}\Big(\norm{\rho_t}+\norm{\rho_{h,t}} \Big)\limsup_{t\to\infty}\norm{\theta_h}
  +\limsup_{t\to\infty}\left(\norm{\nabla \vartheta}_{0,\beta}+\norm{\nabla \varphi}_{0,\beta}\right) \Big]\\
&\quad \le C h^{\frac{k-1}2}\mathcal K^2\Big\{ \mathcal K \mathcal L\Big( \limsup_{t\to\infty} \Hh(t)\Big)^{\frac 12} +  \limsup_{t\to\infty} \left(\norm{\rhot}_{k,\beta} +  \norm{\phi}_{k,\beta}\right) \Big\} .
\end{split}
\eeqs
Therefore, we obtain \eqref{limsupErrGrad}.
 \end{proof}

\subsection{Error analysis for fully discrete Galerkin method}
 In analyzing this method, proceed in a similar fashion as for the semidiscrete method. We derive an error estimate for the fully discrete time Galerkin approximation of the differential equation. First, we give some uniform stability results  that are crucial in
getting the  convergence results.

\begin{lemma} Let  $\rhot_h^n$ solve the fully discrete Galerkin finite element
approximation \eqref{fullydiscreteform} for each time step $n\ge 1 $. Then, there exists a positive constant $C$ independent of $t,n, \Delta t$ satisfying
\beq\label{pwBound}
\norm{\rhot_h^n} \le   C\max\left\{ \norm{\rhot^0}, 1+ \norm{\ft^n}^{\frac{1}{\beta-1}} + \norm{\phi^n}_{1}^{\frac{2}{\beta}} \right \} . 
\eeq
For all $i=1\ldots n,$
\beqs
\begin{split}
\sum_{j=i}^n \Delta t\norm{ \nabla \rhot_h^j}_{0,\beta}^\beta&\le C\max \left\{ \norm{\rhot^0}^2, 1+ \norm{\ft^{i-1}}^{\frac{2}{\beta-1}} +\norm{\phi^{i-1}}_{1}^\frac{4}{\beta} \right \}\\
&+ C\sum_{j=i}^m\Delta t \left(1+ \norm{\ft^j}^{\lambda} +\norm{\phi^j}_{1}^2\right).
\end{split}
\eeqs
\end{lemma}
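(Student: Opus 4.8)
The plan is to mirror the continuous-time arguments of Theorem~\ref{bound-lq} and Theorem~\ref{UniGrad} at the discrete level, replacing the time derivative by the backward difference quotient and the ODE Gronwall lemmas by the discrete Gronwall inequality of Lemma~\ref{DGronwall}. First I would test \eqref{fullydiscreteform} with $w_h=\rhot_h^n$ and exploit the elementary identity $(a-b)a\ge\tfrac12(a^2-b^2)$ for the difference quotient, which gives
\beqs
\Big(\frac{\rhot_h^n-\rhot_h^{n-1}}{\Dt},\rhot_h^n\Big)\ge \frac{1}{2\Dt}\big(\norm{\rhot_h^n}^2-\norm{\rhot_h^{n-1}}^2\big).
\eeqs
Writing $\nabla\rhot_h^n=\nabla\rho_h^n-\nabla\pi\phi^n$ in the nonlinear term reproduces the splitting of \eqref{dpalpha}, and the right-hand side is estimated exactly as in \eqref{term1}--\eqref{term2} using the upper bound on $K(\cdot)$, the weighted embedding \eqref{embL2} and Young's inequality. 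This produces the discrete counterpart of \eqref{diff-neq},
\beqs
\frac{1}{\Dt}\big(\norm{\rhot_h^n}^2-\norm{\rhot_h^{n-1}}^2\big)+\norm{K^{\frac12}(|\nabla\rho_h^n|)\nabla\rho_h^n}^2\le C\big(\norm{\nabla\pi\phi^n}^2+\norm{\ft^n}+\norm{\ft^n}^\lambda\big),
\eeqs
which serves as the common starting point for both assertions.

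For the pointwise $L^2$ bound I would convert the dissipation term into a power of $\norm{\rhot_h^n}$ using \eqref{trhoEst} (with $\delta=1$) and the Poincar\'e--Sobolev inequality \eqref{PSIneq}, absorbing $\norm{\ft^n}$ into $1+\norm{\ft^n}^\lambda$ and, since $\beta\le2$, bounding $\norm{\nabla\pi\phi^n}$ and $\norm{\nabla\pi\phi^n}_{0,\beta}^\beta$ by $\norm{\phi^n}_1$ through the projection stability \eqref{L2proj0}. This yields an inequality of the form $\tfrac{y_n-y_{n-1}}{\Dt}+h\,y_n^{\beta/2}\le f_n$ with $y_n=\norm{\rhot_h^n}^2$ and $f_n=C(1+\norm{\ft^n}^\lambda+\norm{\phi^n}_1^2)$. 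Applying Lemma~\ref{DGronwall} with $\theta=\beta/2$ gives $y_n\le\max\{y_0,(f_n/h)^{2/\beta}\}$; taking square roots, using $\lambda/\beta=1/(\beta-1)$ and $\norm{\rhot_h^0}\le C\norm{\rhot^0}$ then delivers precisely \eqref{pwBound}.

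For the summed gradient estimate I would instead keep the dissipation term intact, multiply the discrete energy inequality by $\Dt$ and telescope from $j=i$ to $j=n$; dropping the nonnegative endpoint $\norm{\rhot_h^n}^2$ leaves
\beqs
\sum_{j=i}^n\Dt\,\norm{K^{\frac12}(|\nabla\rho_h^j|)\nabla\rho_h^j}^2\le \norm{\rhot_h^{i-1}}^2+C\sum_{j=i}^n\Dt\big(\norm{\nabla\pi\phi^j}^2+\norm{\ft^j}+\norm{\ft^j}^\lambda\big).
\eeqs
Bounding the dissipation below by $\norm{\nabla\rhot_h^j}_{0,\beta}^\beta$ through \eqref{iineq2} (equivalently \eqref{trhoEst}), controlling $\norm{\rhot_h^{i-1}}^2$ by the square of the just-proved bound \eqref{pwBound} at step $i-1$, and absorbing the lower-order data by H\"older's inequality (since $\beta\le2$ yields $\norm{\nabla\pi\phi^j}_{0,\beta}^\beta\le C\norm{\phi^j}_1^\beta\le C(1+\norm{\phi^j}_1^2)$) together with Young's inequality, produces the second inequality.

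The right-hand side estimates are routine, transcribing \eqref{term1}--\eqref{term2} almost verbatim. The step needing the most care is the discrete Gronwall application: because the data $\ft^n,\phi^n$ vary with $n$, one must invoke the one-step dichotomy underlying Lemma~\ref{DGronwall}---namely that $y_n>y_{n-1}$ forces $h\,y_n^{\beta/2}\le f_n$, so $y_n\le\max\{y_{n-1},(f_n/h)^{2/\beta}\}$---to land on the current-step data appearing in \eqref{pwBound}, and then to feed this pointwise bound back into the telescoped sum with the exponents $\lambda$, $1/(\beta-1)$ and $2/\beta$ correctly aligned.
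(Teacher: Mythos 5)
Your proposal is correct and follows essentially the same route as the paper: testing with $\rhot_h^n$, the telescoping identity for the difference quotient, the estimates \eqref{term1}--\eqref{term2} for the right-hand side, the lower bound \eqref{trhoEst} with $\delta=1$ plus the Poincar\'e--Sobolev inequality, the discrete Gronwall Lemma~\ref{DGronwall}, and then summation from $j=i$ to $n$ with the pointwise bound controlling $\norm{\rhot_h^{i-1}}^2$. Your closing remark about the step-dependent data in the discrete Gronwall application is a fair observation of a subtlety the paper passes over silently, but it does not change the argument.
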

\begin{proof}
Choosing $w_h =2\Delta t\rhot_h^n$ in \eqref{fullydiscreteform} and  using identity
 \beqs
 2\Big(  \rhot_h^n - \rhot_h^{n-1}, \rhot_h^n\Big) = \norm{\rhot_h^n}^2 - \norm{\rhot_h^{n-1}}^2 +\norm{\rhot_h^n -\rhot_h^{n-1}}^2,
 \eeqs 
 we obtain 
\begin{multline*}
\norm{\rhot_h^n}^2 - \norm{\rhot_h^{n-1}}^2 +\norm{\rhot_h^n -\rhot_h^{n-1}}^2 + 2\Delta t \norm{K^{\frac{1}{2}}(|\nabla \rho_h^n|)\nabla \rho_h^n}^2\\
 = 2\Delta t (\ft^n, \rhot_h^n )+2\Delta t \big(K(|\nabla \rho_h^n|)\nabla \rho_h^n, \nabla \pi\phi^n\big).  
\end{multline*}
Using Young's inequality and \eqref{term1} then  
\begin{align*}
2\Delta t \big(K(|\nabla \rho_h^n|)\nabla \rho_h^n, \nabla \pi\phi^n\big) \le \frac{\Delta t}2  \norm{K^{\frac{1}{2}}(|\nabla \rho_h^n|)\nabla \rho_h^n}^2 +C\Delta t \norm{\nabla \pi\phi^n}^2.
\end{align*}
According to the inequality \eqref{term2}, 
\begin{align*}
2\Delta t (\ft^n, \rhot_h^n )&\le C \Delta t\norm{\ft^n}+ C\Delta t\norm{\ft^n}^{\lambda} +  \frac {\Delta t} 2 \norm{K^{\frac 12}(|\nabla\rho_h^n|) \nabla\rho_h^n}^2 +C \Delta t \norm{\nabla\pi\phi^n}^{2}. 
\end{align*}
Hence,
\beqs
\norm{ \rhot_h^n}^2-\norm{ \rhot_h^{n-1}}^2 +   \norm{\rhot_h^n -\rhot_h^{n-1}}^2 +\Delta t \norm{K^{\frac{1}{2}}(|\nabla \rho_h^n|)\nabla \rho_h^n}^2\le   C\Delta t\left(\norm{\ft^n}+ \norm{\ft^n}^{\lambda}   + \norm{\nabla\pi\phi^n}^2\right).
\eeqs
According to \eqref{trhoEst}, 
\beq\label{preEmb}
c_3 2^{-a}\left(2^{1-\beta}\norm{\nabla \rhot_h^n}_{0,\beta}^\beta  -\norm{\nabla \pi\phi^n}_{0,\beta}^\beta- 1\right)\le \norm{K^{\frac{1}{2}}(|\nabla \rho_h^n|)\nabla \rho_h^n}^2.
\eeq
From the two above inequalities,  we obtain 
\beq\label{rhotn}
\begin{split}
\norm{ \rhot_h^n}^2-\norm{ \rhot_h^{n-1}}^2& + \norm{\rhot_h^n -\rhot_h^{n-1}}^2+ \frac{c_3}{2}\Delta t \norm{\nabla \rhot_h^n}_{0,\beta}^\beta\\
& \le   C\Delta t \left(\norm{\ft^n}+ \norm{\ft^n}^{\lambda} +\norm{\nabla\pi\phi^n}^2+\norm{\nabla\pi\phi^n}_{0,\beta}^\beta+1\right)\\
& \le   C\Delta t \left(1+\norm{\ft^n}^{\lambda} +\norm{\nabla\pi\phi^n}^2\right)\\
&\le   C\Delta t\left(1+ \norm{\ft^n}^{\lambda} +\norm{\phi^n}_{1}^2\right).
\end{split}
\eeq
Applying Poincar\'e inequality  
$
\norm{\rhot_h^n} \le C_p \norm{\nabla \rhot_h^n }_{0,\beta},  
$ 
shows that   
\begin{align*}
\norm{ \rhot_h^n}^2-\norm{ \rhot_h^{n-1}}^2&+\norm{\rhot_h^n -\rhot_h^{n-1}}^2  + \frac{c_3\Delta t}{2C_p}\norm{\rhot_h^n}^\beta\le   C\Delta t\left(1+ \norm{\ft^n}^{\lambda} +\norm{\phi^n}_{1}^2\right).
\end{align*}
Applying the discrete Gronwall's version in Lemma \ref{DGronwall},  we find that 
\begin{align*}
\norm{\rhot_h^n}^2
 \le   C\max \left\{ \norm{\rhot_h^0}^2, \big(1+ \norm{\ft^n}^{\lambda} +\norm{\phi^n}_{1}^2\big)^{\frac{2}{\beta}} \right \},
\end{align*}
which implies \eqref{pwBound}. 

Now summing up \eqref{rhotn} with $n$ from $i$ to $m$ and dropping some nonnegative terms, we find that 
\beqs
\begin{split}
&\frac{c_3}{2}\sum_{j=i}^m \Delta t\norm{ \nabla \rhot_h^j}_{0,\beta}^\beta \le \norm{ \rhot_h^{i-1}}^2 + C\Delta t \sum_{j=i}^m\left(1+ \norm{\ft^j}^{\lambda} +\norm{\phi^j}_{1}^2\right)\\
&\qquad\qquad\le C\max \left\{ \norm{\rhot_h^0}^2, \big(1+ \norm{\ft^{i-1}}^{\lambda} +\norm{\phi^{i-1}}_{1}^2\big)^{\frac{2}{\beta}} \right \}+ C\sum_{j=i}^m\Delta t \left(1+ \norm{\ft^j}^{\lambda} +\norm{\phi^j}_{1}^2\right).
\end{split}
\eeqs
The proof is complete.
\end{proof}

As in the semidiscrete case, we use $\chi = \rhot -\rhot_h$, $\vartheta=\rhot-\rhot_h$, $\theta_h=\rhot_h-\pi \rhot$ and $\chi^n$, $\vartheta^n$, $\theta_h^n$  be evaluating $\chi$, $\vartheta$, $\theta_h$ at the discrete time levels.
We also define  
$$
\partial \rhot^n = \frac {\rhot^{n} -\rhot^{n-1} }{\Delta t}.
$$

\begin{theorem}\label{Err-rho-ful}
Let $1\le k\le r+1$,  $\rhot$ solve problem \eqref{weakform} and $\rhot_h^n$ solve the fully  discrete finite element approximation \eqref{fullydiscreteform} for each time step $n$, $n\ge 1$.  Suppose that $\rhot_{tt} \in L^2(\R_+, L^2(\Omega)),$ $\rhot\in L^\infty(\R_+,H^k(\Omega))$. Then, there exists a positive constant $C(\rho)$ independent of $h$ and $\Delta t$  such that if  the $\Delta t$ is sufficiently small then 
\beq\label{fulerrl2}
\norm{\rhot^n-\rhot_h^n}^2 \le C( h^{k-1}+ \Delta t ).
\eeq  
\end{theorem}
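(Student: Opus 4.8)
The plan is to mirror the continuous-time argument of Theorem~\ref{longerr1}, replacing the differential Gronwall inequality by the discrete version in Lemma~\ref{DGronwall} and inserting the backward-Euler consistency error. I would first write the weak form \eqref{weakform} at $t=t_n$ and subtract the fully discrete scheme \eqref{fullydiscreteform} to get, for every $w_h\in W_h$,
\beqs
\Big(\rhot_t^n-\frac{\rhot_h^n-\rhot_h^{n-1}}{\Dt},w_h\Big)+\big(K(|\nabla\rho^n|)\nabla\rho^n-K(|\nabla\rho_h^n|)\nabla\rho_h^n,\nabla w_h\big)=0.
\eeqs
Denoting the flux difference by $E^n=K(|\nabla\rho^n|)\nabla\rho^n-K(|\nabla\rho_h^n|)\nabla\rho_h^n$, splitting $\chi^n=\vartheta^n-\theta_h^n$, and decomposing the temporal term as $\rhot_t^n-\partial\rhot_h^n=(\rhot_t^n-\partial\rhot^n)+\partial\chi^n$, I would test with $w_h=\theta_h^n$. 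The key simplification is that, since $\pi$ is the $L^2$-projection, $\vartheta^n$ is $L^2$-orthogonal to $W_h$, so $(\partial\vartheta^n,\theta_h^n)=0$; writing also $\nabla\theta_h^n=\nabla\vartheta^n+\nabla\varphi^n-\nabla(\rho^n-\rho_h^n)$, the error identity reduces to
\beqs
(\partial\theta_h^n,\theta_h^n)+\big(E^n,\nabla(\rho^n-\rho_h^n)\big)=\big(E^n,\nabla\vartheta^n+\nabla\varphi^n\big)+(\rhot_t^n-\partial\rhot^n,\theta_h^n).
\eeqs

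Next I would estimate each term. The summation-by-parts identity gives $(\partial\theta_h^n,\theta_h^n)\ge\frac{1}{2\Dt}\big(\norm{\theta_h^n}^2-\norm{\theta_h^{n-1}}^2\big)$, and monotonicity \eqref{Mono} bounds the second left-hand term below by $c_4\omega^n\norm{\nabla(\rho^n-\rho_h^n)}_{0,\beta}^2$. On the right, the flux term is controlled exactly as in \eqref{r20}, via Lipschitz continuity \eqref{Lipchitz} and H\"older's inequality, by $C\Lambda(t_n)\big(\norm{\nabla\vartheta^n}_{0,\beta}+\norm{\nabla\varphi^n}_{0,\beta}\big)$, while the consistency term is $\le\norm{\rhot_t^n-\partial\rhot^n}\norm{\theta_h^n}$ with $\norm{\rhot_t^n-\partial\rhot^n}^2\le C\Dt\int_{t_{n-1}}^{t_n}\norm{\rhot_{tt}(s)}^2\,ds$ by Taylor's formula and $\rhot_{tt}\in L^2(\R_+,L^2)$. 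Using the Poincar\'e–Sobolev inversion of \eqref{InvIneq0} to replace $\norm{\nabla(\rho^n-\rho_h^n)}_{0,\beta}^2$ by a multiple of $\norm{\theta_h^n}^2$ minus projection errors, and invoking that the regularity $\rhot\in L^\infty(\R_+,H^k)$ together with the stability bounds of \S\ref{Bsec} make $\norm{\nabla\rho^n}_{0,\beta}$ and $\norm{\nabla\rho_h^n}_{0,\beta}$ uniformly bounded, I obtain a uniform lower bound $\omega^n\ge\omega_0(\rho)>0$ and $\Lambda(t_n)\le C(\rho)$. After Young's inequality to absorb the $\norm{\theta_h^n}$ coming from the consistency term and multiplication by $2\Dt$, this yields the recursion
\beqs
\norm{\theta_h^n}^2-\norm{\theta_h^{n-1}}^2+c\,\omega_0\Dt\norm{\theta_h^n}^2\le C(\rho)\Dt\big(\norm{\rhot_t^n-\partial\rhot^n}^2+\norm{\vartheta^n}^2+\norm{\nabla\varphi^n}_{0,\beta}^2+\norm{\nabla\vartheta^n}_{0,\beta}+\norm{\nabla\varphi^n}_{0,\beta}\big).
\eeqs

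Setting $r=(1+c\omega_0\Dt)^{-1}<1$ and using $\theta_h^0=\rhot_h^0-\pi\rhot^0=0$ (by the choice of initial data), iteration gives $\norm{\theta_h^n}^2\le\Dt\sum_{j=1}^n r^{\,n-j+1}d_j$. I would then bound the geometric sum by $\sum_j\Dt\,r^{n-j}\le(c\omega_0)^{-1}$ and apply the projection estimates \eqref{prjpi}: the first-order terms $\norm{\nabla\vartheta^j}_{0,\beta},\norm{\nabla\varphi^j}_{0,\beta}\le Ch^{k-1}$ are the dominant ones and produce $O(h^{k-1})$; the quadratic term $\norm{\vartheta^j}^2\le Ch^{2k}$ is lower order; and the consistency sum telescopes to $\frac{\Dt^2}{3}\int_0^{t_n}\norm{\rhot_{tt}}^2\,ds\le C\Dt^2$. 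Hence $\norm{\theta_h^n}^2\le C(\rho)(h^{k-1}+\Dt^2)\le C(\rho)(h^{k-1}+\Dt)$ for $\Dt\le1$, and the triangle inequality $\norm{\rhot^n-\rhot_h^n}^2\le2\norm{\vartheta^n}^2+2\norm{\theta_h^n}^2$ with $\norm{\vartheta^n}^2\le Ch^{2k}$ gives \eqref{fulerrl2}.

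I expect the main obstacle to be closing the discrete Gronwall uniformly against the degenerate weight $\omega^n$. In the continuous case the exact integrating factor $e^{-2^{-a}\int_s^t\Lambda(\tau)^{-1}d\tau}$ absorbs the data for all time; in the discrete setting this role is played by the geometric factor $r^{n-j}$, and the argument only closes because the constant is permitted to depend on $\rho$, giving the uniform lower bound $\omega^n\ge\omega_0(\rho)$ and the uniform upper bound on $\Lambda(t_n)$. A second delicate point is that, since only $\rhot_{tt}\in L^2(\R_+,L^2)$ is available, the backward-Euler consistency error cannot be estimated pointwise in $n$ but must be carried through the iteration in its integrated, telescoping form; the smallness of $\Dt$ is needed only to absorb the $\norm{\theta_h^n}^2$ generated by that term into the left-hand coefficient $c\omega_0\Dt\norm{\theta_h^n}^2$ and to replace the resulting $\Dt^2$ by $\Dt$.
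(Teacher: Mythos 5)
Your proposal is correct and follows essentially the same route as the paper's proof: the same error equation tested with $\theta_h^n$, the same use of the $L^2$-projection orthogonality to remove $\partial\vartheta^n$, monotonicity \eqref{Mono} combined with the Poincar\'e--Sobolev bound \eqref{InvIneq0}, the Lipschitz/H\"older estimate \eqref{r20} for the flux difference, and the Taylor-expansion bound for the backward-Euler consistency term. The only divergence is at the closing step: the paper invokes its discrete Gronwall inequality (Lemma~\ref{DGronwall}) with the $n$-dependent right-hand side treated as a constant bound, whereas you iterate the recursion explicitly with the geometric factor $(1+c\,\omega_0\Dt)^{-1}$ and telescope the consistency sum, which is slightly more careful and in fact yields $O(\Dt^2)$ for the time-discretization contribution before you weaken it to $O(\Dt)$.
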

\begin{proof}
Evaluating equation \eqref{weakform} at $t=t_n$ gives
\beq\label{fuldis1}
\intd  {\rhot_t^{n}, w } +  \intd{K(|\nabla \rho^n|)\nabla \rho^n, \nabla w} =(\ft^n, w ), \quad \forall w\in W. 
\eeq  
Subtracting \eqref{fullydiscreteform} from \eqref{fuldis1}, we obtain 
\beq\label{ErrEq}
( \partial \rho_h^n-\partial \pi\rhot^n , w_h) +  \left(K(|\nabla \rho_h^n|)\nabla \rho_h^n- K(|\nabla \rho^n|)\nabla \rho^n, \nabla w_h\right)=( \pi\rhot_t^n- \partial \pi\rhot^n,w_h ). 
\eeq
We rewrite \eqref{ErrEq} as the form
\beq\label{vvq}
(\partial \theta_h^n , w_h) +  \left(K(|\nabla \rho_h^n|)\nabla \rho_h^n- K(|\nabla \rho^n|)\nabla \rho^n, \nabla w_h\right)=(   \pi\rhot_t^n-\partial \pi\rhot^n,w_h ).  
\eeq  
Selecting $w_h =\theta_h^n$ in \eqref{vvq} gives  
\beq\label{eereq}
\begin{split}
&(\partial \theta_h^n , \theta_h^n ) +  \left(K(|\nabla \rho^n|)\nabla \rho^n- K(|\nabla \rho_h^n|)\nabla \rho_h^n, \nabla \rho ^n- \nabla\rho_h^n \right)\\
&\qquad\qquad=\left(K(|\nabla \rho^n|)\nabla \rho^n- K(|\nabla \rho_h^n|)\nabla \rho_h^n, \nabla \vartheta^n +\nabla \varphi^n \right)+( \pi\rhot_t^n -  \partial \pi\rhot^n,\theta_h^n  ).  
\end{split} 
\eeq
We will evaluate \eqref{eereq}  term by term. 

For the first term, we use the identity 
\beq\label{rhs1}
\begin{split}
(\partial \theta_h^n , \theta_h^n )=\left(\partial \theta_h^n, \frac{\theta_h^n +\theta_h^{n-1}}{2}+ \frac {\Delta t} 2 \partial \theta_h^n\right)= \frac 1{2\Delta t}\left(\norm{\theta_h^n}^2  -\norm{\theta_h^{n-1}}^2\right)+\frac{\Delta t}{2}\norm{\partial \theta_h^n }^2 . 
\end{split}
\eeq

For the second term, the monotonicity of $K(\cdot)$ in \eqref{Mono} and \eqref{InvIneq0} yield 
 \beq\label{rhs2} 
 \begin{split}
 (K(|\nabla \rho^n|)\nabla \rho^n -K(|\nabla \rho_h^n|)\nabla \rho_h^n, \nabla\rho^n -\nabla\rho_h^n)  &\ge c_4\omega^n \norm{\nabla (\rho^n -\rho_h^n)}_{0,\beta}^2\\
 & \ge \frac{c_4}{4C_p^2}\omega^n\norm{\theta_h^n}^2 - \frac{c_4}{2C_p^2}\omega^n\norm{\vartheta^n}^2 - \omega^n\norm{\nabla\varphi^n}_{0,\beta}^2\\
 & \ge \frac{c_4}{4C_p^2}\omega^n\norm{\theta_h^n}^2 - \frac{c_4}{2C_p^2}\norm{\vartheta^n}^2 - \norm{\nabla\varphi^n}_{0,\beta}^2. 
 \end{split}
 \eeq
 where 
 \beqs
 \omega^n= \omega(t_n) = \left(1+\max\left\{\norm{\nabla \rho_h^n}_{0,\beta}, \norm{\nabla \rho^n}_{0,\beta} \right \}\right)^{-a} .
  \eeqs
  
 For third term, using \eqref{r20}, we find that    
\beq\label{lhs1}
\begin{split}
\left(K(|\nabla \rho^n|)\nabla \rho^n- K(|\nabla \rho_h^n|)\nabla \rho_h^n, \nabla \vartheta^n +\nabla \varphi^n \right)\le C \Lambda(t_n) \left( \norm{\nabla \vartheta^n}_{0,\beta}+\norm{ \nabla \varphi^n }_{0,\beta}\right).
\end{split}
\eeq

  For the last term, it follows from using $L^2$-projection and Taylor expand that  
  \beq\label{lhs2}
  \begin{split}
 ( \pi\rho_t^n -  \partial \pi\rho^n,\theta_h^n  )&=( \rho_t^n -  \partial\rho^n,\theta_h^n  ) =\left(\frac 1{\Delta t} \int_{t_{n-1}}^{t_n} \rho_{tt}(\tau) (\tau-t_{n-1})   d\tau, \theta_h^n \right)\\
 &\le \frac 1{\Delta t} \norm{\int_{t_{n-1}}^{t_n} \rho_{tt}(\tau) (\tau-t_{n-1})   d\tau }\norm{\theta_h^n}\\
 &\le \frac {C}{\Delta t }\left(\int_{t_{n-1}}^{t_n} \norm{\rho_{tt}(\tau)}^2 d\tau\right)^{\frac1 2}\left(\int_{t_{n-1}}^{t_n} (\tau-t_{n-1})^2   d\tau\right)^{\frac 12} \norm{\theta_h^n}\\
 &\le C\varep^{-1} \Delta t \int_{t_{n-1}}^{t_n} \norm{\rho_{tt}(\tau)}^2 d\tau +\varep\norm{\theta_h^n}^2 .
 \end{split}
 \eeq
  Combining \eqref{rhs1}, \eqref{rhs2},\eqref{lhs1} and \eqref{lhs2}, we obtain     
\begin{multline*}
 \frac 1{2\Delta t}\left(\norm{\theta_h^n}^2  -\norm{\theta_h^{n-1}}^2\right)+  \frac{c_4\omega^n}{4C_p^2} \norm{\theta_h^n}^2\le C \Lambda(t_n) \left( \norm{\nabla \vartheta^n}_{0,\beta}+\norm{ \nabla \varphi^n }_{0,\beta}\right)\\
\quad+ C\varep^{-1} \Delta t \int_{t_{n-1}}^{t_n} \norm{\rho_{tt}(\tau)}^2 d\tau +\varep\norm{\theta_h^n}^2+  C\norm{\vartheta^n}^2 + C\norm{\nabla\varphi^n}_{0,\beta}^2.
\end{multline*}
 Choosing $\varep=\frac{c_4\omega^{n}}{8C_p^2}$ in previous inequality, we find that  
  \beqs
\begin{split}
 &\frac 1{2\Delta t}\left(\norm{\theta_h^n}^2  -\norm{\theta_h^{n-1}}^2\right)+  \frac{c_4\omega^n}{8C_p^2} \norm{\theta_h^n}^2\le C \Lambda(t_n) \left( \norm{\nabla \vartheta^n}_{0,\beta}+\norm{ \nabla \varphi^n }_{0,\beta}\right)\\
 &\quad+ C(\omega^n)^{-1} \Delta t \int_{t_{n-1}}^{t_n} \norm{\rho_{tt}(\tau)}^2 d\tau + C\norm{\vartheta^n}^2 + \norm{\nabla\varphi^n}_{0,\beta}^2\\
 &\le C\Lambda(t_n) \left( \norm{\nabla \vartheta^n}_{0,\beta}+\norm{ \nabla \varphi^n }_{0,\beta}+ \norm{\vartheta^n}^2 + \norm{\nabla\varphi^n}_{0,\beta}^2+ \Delta t \int_{t_{n-1}}^{t_n} \norm{\rho_{tt}(\tau)}^2 d\tau\right).
\end{split}
\eeqs
According to discrete Gronwall's inequality in Lemma~\ref{DGronwall},  
\beq\label{comb3}
\begin{aligned}
\norm{\theta_h^n}^2  &\le C(\omega^n)^{-1}\Lambda(t_n) \left( \norm{\nabla \vartheta^n}_{0,\beta}+\norm{ \nabla \varphi^n }_{0,\beta}+ \norm{\vartheta^n}^2 + \norm{\nabla\varphi^n}_{0,\beta}^2+ \Delta t \int_{t_{n-1}}^{t_n} \norm{\rho_{tt}(\tau)}^2 d\tau\right)\\
 &\le C\Lambda(t_n)^2 \left( \norm{\nabla \vartheta^n}_{0,\beta}+\norm{ \nabla \varphi^n }+ \norm{\vartheta^n}^2 + \norm{\nabla\varphi^n}_{0,\beta}^2+ \Delta t \int_{t_{n-1}}^{t_n} \norm{\rho_{tt}(\tau)}^2 d\tau\right)\\
 &\le C\Ff^2(t_n) \left( \norm{\nabla \vartheta^n}_{0,\beta}+\norm{ \nabla \varphi^n }+ \norm{\vartheta^n}^2 + \norm{\nabla\varphi^n}_{0,\beta}^2+ \Delta t \int_{t_{n-1}}^{t_n} \norm{\rho_{tt}(\tau)}^2 d\tau\right).
\end{aligned}
\eeq
The inequality \eqref{fulerrl2} follows from combining \eqref{comb3} and the inequality
\beq\label{comb5}
 \norm{\rhot^n-\rhot_h^n}^2\le 2\norm{\theta_h^n}^2 +2\norm{\vartheta^n}^2. 
 \eeq
The proof is complete.  
\end{proof}

\begin{theorem} Under assumption of Theorem~\ref{Err-rho-ful}. There exists a positive constant $C(\rho,\phi)$ independent of $h$ and $\Delta t$  such that if  the $\Delta t$ is sufficiently small then 
 \beq\label{rateConv2}
 \norm{\nabla \rho^n_h - \nabla \rho^n}_{0,\beta}^2 \le C (h^{k-1}+\Delta t). 
 \eeq
 
\end{theorem}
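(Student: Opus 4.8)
The plan is to transcribe the proof of Theorem~\ref{longerr2} into the fully discrete framework, starting from the tested error identity \eqref{eereq} but now keeping the monotone term as a gradient quantity instead of reducing it to the $L^2$ norm of $\theta_h^n$ as was done for Theorem~\ref{Err-rho-ful}. First I would move the monotone pairing to the left of \eqref{eereq} and bound it below by the strong monotonicity \eqref{Mono} (exactly as in \eqref{posOper}), producing the factor $c_4\omega^n\norm{\nabla(\rho^n-\rho_h^n)}_{0,\beta}^2$. The right-hand side then consists of three pieces: the nonlinear mismatch $\bigl(K(\gabs{\nabla\rho^n})\nabla\rho^n-K(\gabs{\nabla\rho_h^n})\nabla\rho_h^n,\nabla\vartheta^n+\nabla\varphi^n\bigr)$, the backward-Euler consistency term $(\pi\rhot_t^n-\partial\pi\rhot^n,\theta_h^n)$, and the discrete time-derivative coupling $-(\partial\theta_h^n,\theta_h^n)$.

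The first two pieces are routine. The nonlinear mismatch is estimated exactly as in \eqref{r20}, giving $C\Lambda(t_n)(\norm{\nabla\vartheta^n}_{0,\beta}+\norm{\nabla\varphi^n}_{0,\beta})$, and the consistency term is controlled by the Taylor/$L^2$-projection bound \eqref{lhs2}, contributing $C\Delta t\int_{t_{n-1}}^{t_n}\norm{\rho_{tt}}^2\,d\tau$ plus an absorbable multiple of $\norm{\theta_h^n}^2$. Dividing by $\omega^n$ and using the discrete analogue of \eqref{Odef0} to replace $(\omega^n)^{-1}$ and $\Lambda(t_n)$ by $C\Ff(t_n)$, I arrive at the discrete counterpart of \eqref{errgrad},
\beqs
\norm{\nabla(\rho^n-\rho_h^n)}_{0,\beta}^2\le C\Ff^2(t_n)\Bigl(\gabs{(\partial\theta_h^n,\theta_h^n)}+\norm{\nabla\vartheta^n}_{0,\beta}+\norm{\nabla\varphi^n}_{0,\beta}+\Delta t\!\int_{t_{n-1}}^{t_n}\!\norm{\rho_{tt}}^2\,d\tau\Bigr).
\eeqs
Into this I would feed the already-established $L^2$ estimate \eqref{fulerrl2} for $\norm{\theta_h^n}$, the projection property \eqref{prjpi} in the forms $\norm{\nabla\vartheta^n}_{0,\beta}\le Ch^{k-1}\norm{\rhot^n}_{k,\beta}$ and $\norm{\nabla\varphi^n}_{0,\beta}\le Ch^{k-1}\norm{\phi^n}_{k,\beta}$, and the $L^2(\R_+;L^2)$ regularity of $\rho_{tt}$ to bound the consistency integral by $C\Delta t$. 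The boundedness of $\Ff(t_n)$ under the stated regularity then turns everything except the coupling term into $O(h^{k-1}+\Delta t)$.

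The delicate point, absent in the continuous case, is the coupling $(\partial\theta_h^n,\theta_h^n)$. In Theorem~\ref{longerr2} the analogous term $(\rho_t-\rho_{h,t},\theta_h)$ split cleanly into a uniformly bounded factor (the a~priori bounds \eqref{dert-rho}) times the small factor $\norm{\theta_h}$; here I would Cauchy--Schwarz to $\norm{\partial\theta_h^n}\,\norm{\theta_h^n}$ and write $\partial\theta_h^n=\partial\rho_h^n-\partial\pi\rhot^n$. The projected increment is controlled by $\norm{\partial\pi\rhot^n}\le\norm{\partial\rho^n}\le\max_{[t_{n-1},t_n]}\norm{\rho_t}$ through $L^2$-projection stability \eqref{L2proj0} and the mean-value form of $\partial\rho^n$, while $\norm{\partial\rho_h^n}$ must be bounded uniformly by a discrete analogue of Theorem~\ref{Uni-rhot}. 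This is the main obstacle: a mere uniform bound on $\norm{\partial\theta_h^n}$ gives $\gabs{(\partial\theta_h^n,\theta_h^n)}\le C\norm{\theta_h^n}=O((h^{k-1}+\Delta t)^{1/2})$, i.e. the square root of the claimed rate, so to reach the full order $h^{k-1}+\Delta t$ one needs a genuine error estimate $\norm{\partial\theta_h^n}=O((h^{k-1}+\Delta t)^{1/2})$ for the discrete time derivative. I would obtain this by differencing the scheme \eqref{fullydiscreteform} in time (equivalently, testing the error equation with $\partial\theta_h^n$) and absorbing the resulting gradient increment through the monotonicity \eqref{Qineq}, which is the discrete shadow of the estimate behind \eqref{dert-rho}. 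With $\norm{\theta_h^n}$ and this time-increment bound in hand, collecting the terms and using the boundedness of $\Ff(t_n)$ yields \eqref{rateConv2}.
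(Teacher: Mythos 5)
Your skeleton agrees with the paper's up to the point where \eqref{eereq} has been reduced to the monotone term on the left and the three pieces you list on the right: the lower bound \eqref{rhs2} via \eqref{Mono}, the mismatch estimate \eqref{r20}, the consistency estimate \eqref{lhs2}, and the replacement $(\omega^n)^{-1}\le\Lambda(t_n)\le C\Ff(t_n)$ from \eqref{Odef0} are all exactly the paper's steps. The divergence is in how the $\theta_h^n$-terms are closed, and it matters. The paper never invokes the $L^2$ estimate \eqref{fulerrl2}: the contribution $\varep\norm{\theta_h^n}^2$ from \eqref{lhs2} is sent back to the left-hand side through the Poincar\'e-type inequality $\norm{\theta_h^n}^2\le 4C_p^2\norm{\nabla(\rho^n-\rho_h^n)}_{0,\beta}^2+C(\omega^n)^{-1}\big(\norm{\vartheta^n}^2+\norm{\nabla\varphi^n}_{0,\beta}^2\big)$ (the reverse reading of \eqref{InvIneq0}) and absorbed by taking $\varep=c_4\omega^n/(8C_p^2)$, after which only projection errors, $\Delta t\int_{t_{n-1}}^{t_n}\norm{\rho_{tt}}^2d\tau$, and powers of $\Ff(t_n)$ remain, giving the full rate $O(h^{k-1}+\Delta t)$ in one pass. (The paper's displayed identity for this theorem simply omits the coupling $(\partial\theta_h^n,\theta_h^n)$, so it never confronts the term you rightly flag.)

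The genuine gap in your proposal is that its conclusion hinges on the unproven auxiliary claim $\norm{\partial\theta_h^n}=O((h^{k-1}+\Delta t)^{1/2})$. That is a full error estimate for the discrete time derivative, generally harder than the estimate being proved and not available under the stated hypotheses $\rho_{tt}\in L^2(\R_+;L^2(\Omega))$, $\rhot\in L^\infty(\R_+;H^k(\Omega))$; and the mechanism you sketch for it does not go through: testing the (differenced) error equation with $\partial\theta_h^n$ produces the pairing of $K(\gabs{\nabla\rho^n})\nabla\rho^n-K(\gabs{\nabla\rho_h^n})\nabla\rho_h^n$ against $\nabla\theta_h^n-\nabla\theta_h^{n-1}$, and the monotonicity \eqref{Qineq} compares the nonlinearity at $\nabla\rho^n$ versus $\nabla\rho_h^n$, not at consecutive time levels, so there is no sign-definite or telescoping structure to absorb that gradient increment; falling back on the Lipschitz bound \eqref{Lipchitz} plus an inverse inequality injects a factor $h^{-1}$. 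Without the auxiliary claim, your route gives $\gabs{(\partial\theta_h^n,\theta_h^n)}\le C\norm{\theta_h^n}=O((h^{k-1}+\Delta t)^{1/2})$ and hence only the square root of the claimed rate for $\norm{\nabla(\rho^n-\rho_h^n)}_{0,\beta}^2$ --- consistent with the half rate that the continuous-time Theorem~\ref{longerr2} actually asserts in \eqref{ErrGrad}, but short of \eqref{rateConv2}. The repair is either to adopt the paper's $\varep$-absorption so that $\norm{\theta_h^n}$ never appears with an $O(1)$ multiplier, or, if you retain the coupling term, to merge it with the consistency term into $(\rhot_t^n-\partial\rhot_h^n,\theta_h^n)$ and prove a discrete analogue of Theorem~\ref{Uni-rhot} for $\norm{\partial\rhot_h^n}$ --- which still leaves you at the half rate.
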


\begin{proof}
We rewrite \eqref{ErrEq} with $w_h=\theta_h$ as 
\begin{multline*}
  \left(K(|\nabla \rho^n|)\nabla \rho^n- K(|\nabla \rho_h^n|)\nabla \rho_h^n, \nabla \rho ^n- \nabla\rho_h^n \right)\\
=\left(K(|\nabla \rho^n|)\nabla \rho^n- K(|\nabla \rho_h^n|)\nabla \rho_h^n, \nabla \vartheta^n +\nabla \varphi^n \right)+( \rho_t^n -  \partial \rho^n,\theta_h^n  ).
\end{multline*} 
Due to \eqref{rhs2}, \eqref{lhs1} and \eqref{lhs2}, we have  
\beqs
\begin{split}
c_4\omega^n\norm{\nabla \rho^n_h - \nabla \rho^n}_{0,\beta}^2 &\le C \Lambda(t_n) \left( \norm{\nabla \vartheta^n}_{0,\beta}+\norm{ \nabla \varphi^n }_{0,\beta}\right)\\
&+C\varep^{-1} \Delta t \int_{t_{n-1}}^{t_n} \norm{\rho_{tt}(\tau)}^2 d\tau +\varep\norm{\theta_h^n}^2.
\end{split}
\eeqs
Using \eqref{rhs2}, we find that
\beqs
\varep\norm{\theta_h^n}^2 \le 4\varep C_p^2 \norm{\nabla \rho^n_h - \nabla \rho^n}_{0,\beta}^2 +   C\varep(\omega^n)^{-1}\left(\norm{\vartheta^n}^2 + \norm{\nabla\varphi^n}_{0,\beta}^2\right).
\eeqs
Hence, 
\beq
\begin{split}
c_4\omega^n\norm{\nabla \rho^n_h - \nabla \rho^n}_{0,\beta}^2 \le C \Lambda(t_n) \left( \norm{\nabla \vartheta^n}_{0,\beta}+\norm{ \nabla \varphi^n }_{0,\beta}\right)+ C\varep^{-1} \Delta t \int_{t_{n-1}}^{t_n} \norm{\rho_{tt}(\tau)}^2 d\tau\\ +4\varep C_p^2 \norm{\nabla \rho^n_h - \nabla \rho^n}_{0,\beta}^2 +   C\varep(\omega^n)^{-1}\left(\norm{\vartheta^n}^2 + \norm{\nabla\varphi^n}_{0,\beta}^2\right). 
\end{split}
\eeq
Selecting $\varep = \frac{c_4\omega^n}{8C_p^2}$ in previous inequality gives    
\begin{multline*}
\norm{\nabla \rho^n_h - \nabla \rho^n}_{0,\beta}^2 \le C \Lambda(t_n)(\omega^n)^{-1} \left( \norm{\nabla \vartheta^n}_{0,\beta}+\norm{ \nabla \varphi^n }_{0,\beta}\right)\\
+ C(\omega^n)^{-2} \Delta t \int_{t_{n-1}}^{t_n} \norm{\rho_{tt}(\tau)}^2 d\tau  +   C(\omega^n)^{-1}\left(\norm{\vartheta^n}^2 + \norm{\nabla\varphi^n}_{0,\beta}^2\right). 
\end{multline*}
Then using \eqref{Odef0}, we have $(\omega^n)^{-1}\le \Lambda(t_n)$. Note that $\Lambda(t_n)>1$, thus  
\beqs
\begin{split}
\norm{\nabla \rho^n_h - \nabla \rho^n}_{0,\beta}^2 
\le C \Lambda(t_n)^2 \left( \norm{\nabla \vartheta^n}_{0,\beta}+\norm{ \nabla \varphi^n }+  \Delta t \int_{t_{n-1}}^{t_n} \norm{\rho_{tt}(\tau)}^2 d\tau+\norm{\vartheta^n}^2 + \norm{\nabla\varphi^n}_{0,\beta}^2\right)\\
\le C \Ff^2(t_n) \left( \norm{\nabla \vartheta^n}_{0,\beta}+\norm{ \nabla \varphi^n }+  \Delta t \int_{t_{n-1}}^{t_n} \norm{\rho_{tt}(\tau)}^2 d\tau+\norm{\vartheta^n}^2 + \norm{\nabla\varphi^n}_{0,\beta}^2\right). 
\end{split}
\eeqs
This proves \eqref{rateConv2}. The proof is complete. 
\end{proof}

\section{Numerical results} \label{Num-result}

In this section, we give simple numerical experiments using Galerkin finite element method in the two dimensional region to illustrate the convergent theory. We test the convergence of our method with the Forchheimer two-term law $g(s)=1+s$. Equation \eqref{Kdef} $sg(s)=\xi,$ $s\ge0$ gives
 $ s= \frac {-1 +\sqrt{1+4\xi}}{2}$ and hence 
$$
K(\xi) =\frac1{g(s(\xi)) } =\frac {2}{1+\sqrt{1+4\xi}}.
$$ 

{\bf Example 1.} The analytical solution is as follows  
\beqs
\rho(x,t)=e^{-2t}x_1(1-x_1)x_2(1-x_2), \quad \forall (x,t)\in [0,1]^2\times[0,1].  
\eeqs
The forcing term $f$ is determined accordingly to the analytical solution by equation $p_t - \nabla \cdot (K(|\nabla \rho|)\nabla\rho ) = f$. Explicitly,  
\beqs
\begin{aligned}
& f(x,t)=-2e^{-2t}x_1(1-x_1)x_2(1-x_2)+\frac {4e^{-2t}\big[x_2(1-x_2) +x_1(1-x_1)\big]}{1+\sqrt{1+4e^{-2t}w(x)  }}\\
&+\frac{2e^{-4t}x_2(1-x_2)(1-2x_1)\big[2x_1(1-x_1)^2(1-2x_2)^2-2x_1^2(1-x_1)(1-2x_2)^2-4x_2^2(1-x_2)^2(1-2x_1) \big] }{ w(x) \sqrt{1+4e^{-2t}w(x)}\left(1+\sqrt{1+4e^{-2t}w(x)}\right)^2  }\\
&+\frac{2e^{-4t}x_1(1-x_1)(1-2x_2)\big[2x_2(1-x_2)^2(1-2x_1)^2 -2x_2^2(1-x_2)(1-2x_1)^2 -4x_1^2(1-x_1)^2(1-2x_2) \big]}{ w(x) \sqrt{1+4e^{-2t}w(x)}\left(1+\sqrt{1+4e^{-2t}w(x)}\right)^2  },
\end{aligned}
\eeqs
 where  $w(x) = \sqrt{(x_2(1-x_2)(1-2x_1))^2 +(x_1(1-x_1)(1-2x_2))^2}$. 
 
The initial data $\rho^0(x) =x_1(1-x_1)x_2(1-x_2)$ and the boundary data $\psi(x,t)=0.$\\
We use the Lagrange element of order $r=1$ on the unit square in two dimensions. Our problem is solved at each time level starting at $t=0$ until the given final time $T$.  At time $T$, we measured the error in $L^2$-norm for density and $L^\beta$-norm for the gradient density. In this example $\beta = 2 - a = 2- \frac{{\mathrm deg} (g)}{{\mathrm deg} ( g) + 1}=\frac 32$. The numerical results are listed in Table I.

\vspace{0.1cm}  
\begin{center}
\begin{tabular}{l| c| c| c| c}
\hline
\multicolumn{5}{ c }{At time $T=1$}\\
\hline
N    & \qquad $\norm{\rho-\rho_h}$ \qquad  &  \quad Rates  \quad&  \quad$\norm{\nabla(\rho-\rho_h)}_{0,\beta}$ \quad & \quad  Rates \quad  \\
\hline
4	&$1.668E-02$      	&-   	        		& $7.081E-02$		&-\\
8	&$1.049E-02$	   	&$0.669$  	& $4.654E-02$  	&$0.605$\\
16	&$6.004E-03$		&$0.805$    	& $2.741E-02$ 		&$0.764$\\
32	&$3.272E-03$		&$0.876$   	& $1.530E-02$ 		&$0.841$\\
64	&$1.723E-03$		&$0.926$		& $8.277E-03$		&$0.887$\\
128  &$8.889E-04$       	&$0.954$		& $4.411E-03$ 		&$0.908$\\
256  &$4.531E-04$       	&$0.972$		& $2.336E-03$		&$0.917$\\
\hline
\multicolumn{5}{ c }{At time $T=10$}\\
\hline
N    & \qquad $\norm{\rho-\rho_h}$ \qquad  &  \quad Rates  \quad&  \quad$\norm{\nabla(\rho-\rho_h)}_{0,\beta}$ \quad & \quad  Rates \quad  \\
\hline
4	&$1.851E-02 $        	&-          		& $7.818E-02 $		&-\\
8	&$1.194E-02 $	   	&$0.633 $  	& $ 5.247E-02$  	&$ 0.575$\\
16	&$6.892E-03 $	   	&$ 0.792$    	& $3.114E-02 $ 	&$0.753 $\\
32	&$3.782E-03 $		&$0.866 $  	& $1.752E-02 $ 	&$ 0.830$\\
64	&$2.001E-03 $	   	&$0.918 $		& $9.537E-03 $		&$ 0.877$\\
128	&$1.035E-03 $		&$0.951 $		& $5.104E-03 $ 	&$0.902 $\\
256	&$ 5.280E-04$  	&$0.971$		& $ 2.713E-03$		&$0.912$\\
\hline
\end{tabular}

\vspace{0.1cm}
Table I. {\it Convergence study for generalized Forchheimer flows using Galerkin FEM with zero  boundary data in 2D.}
\end{center}

{\bf Example 2.} The analytical solution is $\rho(x,t)=e^{1-t}(x_1^2+x_2^2)$ for all  $(x,t)\in [0,1]^2\times [0,1]$. The forcing term $f$, initial condition and boundary condition are determined accordingly to the analytical solution as follows 
\begin{align*}
&f(x,t)= -e^{1-t}z(x)+\frac{16e^{2-2t}z(x) }{\sqrt{z(x)} \sqrt{1+8e^{1-t}\sqrt{z(x)}}\left(1+\sqrt{1+8e^{1-t}\sqrt{z(x)}}\right)^2 }-\frac{8e^{1-t}}{1+\sqrt{1+8e^{1-t}\sqrt{z(x)}}},\\
&\hspace{3cm}\rho^0(x) =e\cdot z(x), \quad \psi(x,t)=e^{1-t}
\begin{cases} 
x_2^2 &\text { on } x_1=0,\\
1+x_2^2 & \text { on } x_1=1,\\
1+x_1^2 & \text { on } x_2=1,\\
x_1^2 & \text { on } x_2=0,
 \end{cases}
\end{align*}
where $z(x)=x_1^2+x_2^2$.
The numerical results are listed in Table II 

 \vspace{0.1cm}  
\begin{center}
\begin{tabular}{l| c| c| c| c}
\hline
\multicolumn{5}{ c }{At time $T=1$}\\
\hline
N    & \qquad $\norm{\rho-\rho_h}$ \qquad  &  \quad Rates  \quad&  \quad$\norm{\nabla(\rho-\rho_h)}_{0,\beta}$ \quad & \quad  Rates \quad  \\
\hline
4	&$7.785E-02 $        	&-          		& $3.767E-01 $		&-\\
8	&$5.700E-02 $	   	&$0.450 $  	& $2.972E-01$  	&$0.342$\\
16	&$ 2.870E-02$	   	&$0.990 $    	& $1.918E-01$ 		&$0.631 $\\
32	&$ 2.296E-02$		&$ 0.322$  	& $1.038E-01 $ 	&$0.887 $\\
64	&$1.434E-02 $	   	&$0.679 $		& $ 6.029E-02$		&$0.783 $\\
128	&$5.459E-03$		&$1.393$		& $ 3.641E-02$		&$ 0.728$\\
256	&$2.401E-03$  		&$1.185$		& $2.128E-02 $		&$0.775 $\\
\hline
\multicolumn{5}{ c }{At time $T=10$}\\
\hline
N    & \qquad $\norm{\rho-\rho_h}$ \qquad  &  \quad Rates  \quad&  \quad$\norm{\nabla(\rho-\rho_h)}_{0,\beta}$ \quad & \quad  Rates \quad  \\
\hline
4	&$9.335E-06$        	&-          		& $5.502E-05 $	&-\\
8	&$6.829E-06 $	   	&$ 0.451$  	& $4.050E-05 $  	&$0.442 $\\
16	&$4.983E-06 $	   	&$0.455 $    	& $2.345E-05 $ 	&$0.788 $\\
32	&$3.014E-06 $		&$0.726 $  	& $1.381E-05 $ 	&$ 0.764$\\
64	&$ 1.351E-06 $	   	&$1.158 $		& $8.231E-06 $		&$0.747 $\\
128	&$6.447E-07 $		&$1.067 $		& $4.759E-06 $ 	&$0.790 $\\
256	&$3.935E-07 $  	&$0.712 $		& $2.666E-06 $		&$ 0.836$\\
\hline
\end{tabular}

\vspace{0.3cm}
Table II. {\it Convergence study for generalized Forchheimer flows using Galerkin FEM with nonzero Dirichlet boundary data in 2D.}
\end{center}


\myclearpage
\myclearpage
\appendix

\bibliographystyle{siam}
\def\cprime{$'$} \def\cprime{$'$} \def\cprime{$'$}

\end{document}